\DeclareMathAlphabet\mathbfit{OML}{cmm}{b}{it}
\let\intertext\shortintertext
\let\setminus\smallsetminus
\newlist{enumarabic}{enumerate}{1}
\setlist[enumarabic]{font=\normalfont,label=(\arabic*),leftmargin=0.3in}
\newlist{enumroman}{enumerate}{1}
\setlist[enumroman]{font=\normalfont,label=(\roman*),leftmargin=0.3in}
\theoremstyle{plain}
\newtheorem{theorem}{Theorem}[section]
\newtheorem{proposition}[theorem]{Proposition}
\newtheorem{lemma}[theorem]{Lemma}
\newtheorem{corollary}[theorem]{Corollary}
\theoremstyle{definition}
\newtheorem{remark}[theorem]{Remark}
\newtheorem{example}[theorem]{Example}
\theoremstyle{remark}
\newtheorem*{acknowledgements}{Acknowledgements}
\numberwithin{equation}{section}
\let\newterm\emph
\def\arxiv#1{\href{http://arxiv.org/abs/#1}{\texttt{arXiv:#1}}}
\def\cf{\emph{cf.}}
\let\epsilon\varepsilon
\let\emptyset\varnothing
\def\setone#1{\underline{#1}}
\def\setzero#1{[#1]}
\def\N{\mathbb N}
\def\Z{\mathbb Z}
\DeclareMathOperator{\Hom}{Hom}
\def\kk{\Bbbk}
\def\deg#1{|#1|}
\def\bigpair#1{\bigl\langle#1\bigr\rangle}
\def\Kan{G}
\def\JJ{b}
\def\EE{\mathbf{E}}
\def\EEt{\tilde{\EE}}
\def\EEE{\mathfrak{E}}
\def\FFF{\mathfrak{F}}
\def\OM{\boldsymbol{\Omega}}
\def\al{\boldsymbol{\alpha}}
\def\Ess{\mathcal{E}}
\def\Fss{\mathcal{F}}
\def\susp{\mathbf{s}}
\def\desusp{\susp^{-1}}
\def\Deltabar{\bar\Delta}
\def\muomc{\mu_{\OM C}}
\def\tpartial{\tilde\partial}
\def\AW{AW}
\def\AWu#1{\AW_{\mkern -1mu #1}}
\DeclareMathOperator{\pos}{pos}
\DeclareMathOperator{\perm}{perm}
\DeclareMathOperator{\Desusp}{des}
\DeclareMathOperator{\id}{id}
\DeclareMathOperator{\im}{im}
\DeclareMathOperator{\Sz}{Sz}
\DeclareMathOperator{\hatSz}{\widehat{S}z}
\def\DD#1#2{D_{#2,#1}}
\def\DDd#1#2{\DD{#1}{#2}^{\,\prime}}
\def\ii{\mathbfit{i}}
\def\jj{\mathbfit{j}}
\def\hjj{\mathbfit{k}}
\def\pp{\mathbfit{p}}
\DeclareMathOperator{\Shuff}{Shuff}
\def\swee#1#2{#1_{(#2)}}
\def\GG{G}
\def\OMex{\boldsymbol{\tilde\Omega}}
\def\Tri{\mathcal T}
\def\TT{\mathbb T}
\def\Cex{\tilde C}
\def\tex{\tilde t}
\def\rr#1{\stackrel{#1}{\rule[0.55ex]{1.6em}{0.5pt}}}
\def\rrbf#1{\rr{\boldsymbol{#1}}}
\def\CobarEl#1{\langle#1\rangle}
\def\bigCobarEl#1{\bigl\langle\,#1\,\bigr\rangle}
\def\aa{\mathfrak a}
\def\FF{\mathcal F}
\DeclareMathOperator{\gr}{gr}
\def\bigdeg#1{\bigl|#1\bigr|}
\let\shuffle\nabla
\def\iter#1#2{#1^{[#2]}}
\def\timesunder#1{\mathbin{\mathchoice
  {\mathop\times\limits_{\mkern-5mu #1\mkern-5mu}}%
  {\times_{#1}}{\times_{#1}}{\times_{#1}}}}
\def\otimesunder#1{\mathbin{\mathchoice
  {\mathop\otimes\limits_{\mkern-5mu #1\mkern-5mu}}%
  {\otimes_{#1}}{\otimes_{#1}}{\otimes_{#1}}}}
\begin{document}

\title{Szczarba's twisting cochain is comultiplicative}
\author{Matthias Franz}
\thanks{The author was supported by an NSERC Discovery Grant.}
\address{Department of Mathematics, University of Western Ontario, London, Ont.\ N6A\;5B7, Canada}
\email{mfranz@uwo.ca}

\subjclass[2020]{Primary 55U10; secondary 55R20, 55T10}

\begin{abstract}
  We prove that Szczarba's twisting cochain is comultiplicative.
  In particular, the induced map from the cobar construction~\(\OM\,C(X)\)
  of the chains on a \(1\)-reduced simplicial set~\(X\) to~\(C(\Kan X)\),
  the chains on the Kan loop group of~\(X\), is a quasi-isomorphism of dg~bialgebras.
  We also show that Szczarba's twisted shuffle map is a dgc map
  connecting a twisted Cartesian product with the associated twisted tensor product.
  This gives a natural dgc model for fibre bundles.
  We apply our results to finite covering spaces and to the Serre spectral sequence.
\end{abstract}

\maketitle

\section{Introduction}

Let \(X\) be a simplicial set and \(G\) a simplicial group. Given a twisting function
\begin{equation}
  \tau\colon X_{>0} \to G,
\end{equation}
Szczarba~\cite{Szczarba:1961} has constructed an explicit twisting cochain
\begin{equation}
  \label{eq:twc-intro}
  t\colon C(X) \to C(G).
\end{equation}
In~\cite[Thm.~6.2]{Franz:szczarba1} we showed that it agrees with the twisting cochain obtained by Shih~\cite[\S II.1]{Shih:1962}
using homological perturbation theory if one uses a slightly modified version of the Eilenberg--Mac\,Lane homotopy.

In this note we consider the associated map of differential graded algebras (dgas)
\begin{equation}
  \label{eq:OMCX-CG}
  \OM \,C(X) \to C(G)
\end{equation}
where \(\OM\,C(X)\) is the reduced cobar construction of the differential graded coalgebra (dgc)~\(C(X)\).

The diagonal of~\(C(G)\) is compatible with the multiplication, meaning that \(C(G)\) is actually a dg~bialgebra.
By work of Baues~\cite{Baues:1981} and Gerstenhaber--Voronov~\cite[Cor.~6]{GerstenhaberVoronov:1995}, the same holds true for~\(\OM \,C(X)\). Here the diagonal
can be expressed via the homotopy Gerstenhaber structure of~\(C(X)\), that is, in terms of certain cooperations
\begin{equation}
  E^{k}\colon C(X) \to C(X)^{\otimes k}\otimes C(X)
\end{equation}
with~\(k\ge0\), see \Cref{sec:hgc}.

The question arises as to whether the dga map~\eqref{eq:OMCX-CG} is comultiplicative, meaning compatible with the diagonals.
Hess--Parent--Scott--Tonks~\cite[Thm.~4.4]{HessEtAl:2006} showed that for \(1\)-reduced~\(X\) this is true up to homotopy in a strong sense,
and they observed that it holds on the nose up to degree~\(3\).
In the case where \(X\) is a simplicial suspension the comultiplicativity was established by Hess--Parent--Scott~\cite[Thm.~4.11]{HessEtAl:2007}.
Our main result says that it is true in general.

\begin{theorem}
  \label{thm:main}
  Let \(X\ne\emptyset\) be a simplicial set, and let \(G\) and~\(\tau\) be as above.
  The dga map~\(\OM \,C(X) \to C(G)\) induced by Szczarba's twisting cochain~\(t\) is comultiplicative.
\end{theorem}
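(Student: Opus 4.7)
My plan is to reformulate Theorem~\ref{thm:main} as an explicit identity at the level of generators of $\OM\,C(X)$ and then verify that identity using Szczarba's combinatorial formula for~$t$. The induced dga map $\bar{f}\colon \OM\,C(X) \to C(G)$ is by construction the multiplicative extension of~$t$, and both $\OM\,C(X)$ and $C(G)$ are dg bialgebras whose diagonals are algebra maps. Hence $\bar{f}$ is comultiplicative if and only if the identity $\Delta_{C(G)} \circ \bar{f} = (\bar{f} \otimes \bar{f}) \circ \Delta_{\OM\,C(X)}$ holds when restricted to the algebra generators, that is, to elements of the form $\langle x \rangle$ with $x \in C(X)$.

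Substituting the Baues--Gerstenhaber--Voronov formula for $\Delta_{\OM\,C(X)} \langle x \rangle$, which expresses this element as a signed sum indexed by $k \ge 0$ of tensor factors built from the homotopy Gerstenhaber cooperations $E^k(x) \in C(X)^{\otimes k} \otimes C(X)$, the required compatibility reduces to a single identity in $C(G) \otimes C(G)$: roughly, $\Delta_{C(G)}\bigl( t(x) \bigr)$ must equal the signed sum over $k\ge 0$ of terms of the form $\bigl( t(x_1)\cdots t(x_k) \bigr) \otimes t(x_{k+1})$, where $x_1 \otimes \cdots \otimes x_{k+1}$ runs over the components of $E^k(x)$. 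Verifying Theorem~\ref{thm:main} is thus equivalent to verifying this identity for every simplex~$x$ of~$X$.

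To establish the identity, I would expand both sides using Szczarba's explicit formula for~$t$ from~\cite{Franz:szczarba1}, which writes $t(x)$ as a signed sum over combinatorial objects (column sequences) indexing products in~$G$ of iterated degenerate faces of images of faces of $x$ under~$\tau$. Applying the Alexander--Whitney diagonal to such a sum yields a double sum indexed by a column sequence together with a splitting of its product; expanding the right-hand side using the combinatorial description of each $E^k$ and then applying Szczarba's formula to each tensor factor similarly yields a double sum indexed by an $E^k$-decomposition of~$x$ together with column sequences for the individual pieces. The main step, and the principal obstacle, is to exhibit a sign-preserving bijection between these two indexing sets, matching a splitting of a column sequence with an $E^k$-decomposition and column sequences for its pieces, while tracking Szczarba's intricate signs against the signs in the Baues--Gerstenhaber--Voronov diagonal. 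A plausible simplification is to reduce to a universal model, for example to the case where $X$ is a standard simplex or to the canonical bundle $EG \to \overline{W}G$, where the combinatorics is under tighter control. Alternatively, as foreshadowed by the abstract, one can approach the theorem via the twisted shuffle map: showing that it is a dgc map is essentially equivalent to endowing the twisted tensor product $C(X) \otimesunder{t} C(G)$ with a dgc structure, and that condition on~$t$ in turn implies Theorem~\ref{thm:main}.
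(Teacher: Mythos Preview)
Your outline is essentially the paper's own strategy: reduce to generators~$\CobarEl{x}$, expand $\Delta_{C(G)}\,t(x)$ via Alexander--Whitney, expand $(\Sz\otimes\Sz)\,\Delta_{\OM C(X)}\,\CobarEl{x}$ via the interval-cut description of the~$E^{k}$, and then match terms. The paper carries this out with a concrete bijection~$\Psi_{\pp}$ (\Cref{thm:Psi-pp-degree}) together with Lemma~\ref{thm:Sz-Psi}, which describes how the operators~$\tpartial^{k-1}$ and~$(\partial_{0})^{l}$ act on~$\Sz_{\ii}x$, and the signs are controlled by \Cref{eq:epp-length0}.

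There is, however, one genuine obstacle your plan does not anticipate: the matching is \emph{not} a clean bijection. On the right-hand side, the interval cuts~$\pp$ for~$e_{k}$ allow the intervals labelled~$k+1$ to have positive length, whereas the Alexander--Whitney diagonal on the left only produces terms corresponding to~$\ell(\pp)=0$. Moreover, whenever some~$x^{\pp}_{s}$ has degree~$1$, the definition $t(x^{\pp}_{s})=\sigma(x^{\pp}_{s})-1$ injects an extra~$-1\in C(G)$ into the product~$t(x^{\pp}_{1})\cdots t(x^{\pp}_{k})$, generating many ``short'' terms with no counterpart on the left. The paper disposes of both issues simultaneously by introducing a partial order on interval cuts via a refinement procedure (\Cref{thm:pp-refine}) and showing that, for each minimal~$\pp$ with~$\ell(\pp)>0$, the $2^{\ell(\pp)}$ refinements~$\pp'\ge\pp$ contribute the same term with alternating signs~$(-1)^{\ell_{1}(\pp')}$ and hence cancel. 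Without this cancellation argument, your bijection cannot close.

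Finally, your alternative route through the twisted shuffle map is circular in this paper's logic: the dgc structure on~$C(X)\otimes_{t}C(F)$ (\Cref{sec:twisted-tensor}) is \emph{defined} using the comultiplicativity of~$t$, so one cannot use it to deduce \Cref{thm:main}. The paper proves \Cref{thm:main} first and derives \Cref{thm:psi-dgc} afterwards, by an argument that is parallel to, not a substitute for, the one above.
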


This applies in particular to the canonical twisting cochain~\(\tau_{X}\colon X_{>0}\to\Kan X\) of a \(1\)-reduced simplicial set
where \(\Kan X\) denotes the Kan loop group of~\(X\). This gives the following.

\begin{corollary}
  \label{thm:cobar-quiso}
  For \(1\)-reduced~\(X\),
  the map~\(\OM\,C(X) \to C(\Kan X)\) induced by Szczarba's twisting cochain~\(t\) is a quasi-iso\-mor\-phism of dg~bialgebras.
\end{corollary}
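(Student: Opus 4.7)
The plan is to derive this as a short combination of \Cref{thm:main} with a classical quasi-isomorphism result. Specializing \Cref{thm:main} to \(G = \Omega X\) and \(\tau = \tau_X\) immediately gives that the induced dga map \(\OM\,C(X) \to C(\Omega X)\) is comultiplicative. Since both sides carry the dg~bialgebra structures mentioned in the introduction (that of Baues and Gerstenhaber--Voronov on \(\OM\,C(X)\), and the standard one on \(C(\Omega X)\)), a comultiplicative dga map between them is automatically a map of dg~bialgebras.

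It then remains only to verify that this map is a quasi-isomorphism of underlying chain complexes, since a map of dg~bialgebras is a quasi-iso of dg~bialgebras if and only if it is a quasi-iso of chain complexes. But this is the classical theorem of Szczarba~\cite{Szczarba:1961} (the simplicial analogue of Adams' theorem), proved by comparing the two-sided twisted tensor product \(C(X)\otimes_t C(\Omega X)\) with the chains on the total space of the universal \(\Omega X\)-bundle over~\(X\) via the Szczarba twisted shuffle map, and using that this total space is contractible together with the Eilenberg--Zilber theorem. For \(1\)-reduced~\(X\), the hypotheses for this comparison are satisfied, so one can invoke the result directly.

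Thus the corollary is essentially a formal consequence of \Cref{thm:main}; the real content is the comultiplicativity statement, and there is no serious obstacle beyond it. The only point worth stating explicitly in the write-up is the observation that, once comultiplicativity is known on the nose, there is nothing further to check: the dga structure on~\(\OM\,C(X)\) and the dg~bialgebra structure on~\(C(\Omega X)\) are standard, and the quasi-isomorphism claim is already in Szczarba's original paper.
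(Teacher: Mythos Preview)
Your proposal is correct and matches the paper's approach: the corollary is stated immediately after \Cref{thm:main} with no separate proof, being treated as the specialization~\(G=\Omega X\), \(\tau=\tau_{X}\) of that theorem together with the classical fact that the induced map is a quasi-isomorphism of complexes for \(1\)-reduced~\(X\). The only minor remark is that the paper, when it later needs a precise reference for the quasi-isomorphism (in \Cref{sec:cobar}), cites Hess--Tonks~\cite{HessTonks:2006} rather than Szczarba's original paper; but the argument you sketch via the contractible total space and the twisted shuffle map is the standard one and is perfectly adequate here.
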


Using Hess--Tonks' extended cobar construction~\(\OMex\,C(X)\), we generalize this to reduced simplicial sets in \Cref{thm:excobar-comult}.
After the prepublication of this article, Medina-Mardones and Rivera
showed that \(\OMex\,C(X)\) and~\(C(\Kan X)\) are quasi-iso\-mor\-phic as \(E_{\infty}\)-coalgebras \cite[Thm.~2]{MMRivera:2021}.
This quasi-isomorphism involves a zigzag, however, not the extension~\(\OMex\,C(X) \to C(\Kan X)\) of the map above.

Given a left \(G\)-space~\(F\), one can consider
the twisted Cartesian product~\(X\times_{\tau}F\) as well as
the twisted tensor product~\(C(X)\otimes_{t}C(F)\).
Dualizing a construction due to Kadeishvili--Saneblidze~\cite{KadeishviliSaneblidze:2005},
we turn the latter into a dgc. Szczarba also defined a twisted shuffle map
\begin{equation}
  \label{eq:psi-intro}
  \psi\colon C(X)\otimes_{t}C(F) \to C(X\times_{\tau}F)
\end{equation}
and proved that it is a quasi-isomorphism of complexes \cite{Szczarba:1961}.
In~\cite[Prop.~7.1]{Franz:szczarba1} we showed that \(\psi\) is in fact a morphism of left \(C(X)\)-comodules,
and also of right \(C(G)\)-modules in the case~\(F=G\). We strengthen the first aspect as follows.

\begin{theorem}
  \label{thm:psi-dgc}
  Szczarba's twisted shuffle map~\(\psi\) is a quasi-isomorphism of dgcs.
  In particular, the twisted tensor product~\(C(X)\otimes_{t}C(F)\) is a dgc model for~\(X\times_{\tau}F\).
\end{theorem}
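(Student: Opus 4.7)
My plan is to verify directly that $\psi$ intertwines the diagonals on both sides; Szczarba's original result already gives that~$\psi$ is a quasi-isomorphism of complexes, so only compatibility with the coalgebra structures is at issue.

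First I would spell out both diagonals on a generator $x\otimes y$ with $x\in C(X)_p$ and $y\in C(F)_q$. On the target, $\Delta(x,y)$ is the Alexander--Whitney front-face/back-face decomposition of the simplex~$(x,y)$ in the twisted Cartesian product, with the twisting function~$\tau$ entering the $F$-part of the back face. On the source, the dualized Kadeishvili--Saneblidze diagonal~$\Delta_{\otimes}$ has the schematic form
\begin{equation*}
  \Delta_{\otimes}(x\otimes y) = \sum_{k\ge0} \sum \pm\,\bigl(x_{1}\otimes t(x_{2})\cdots t(x_{k+1})\cdot \swee{y}{1}\bigr) \otimes \bigl(x_{k+2}\otimes \swee{y}{2}\bigr),
\end{equation*}
where $x_{1}\otimes\cdots\otimes x_{k+1}\otimes x_{k+2}$ ranges over the terms of $E^{k}(x)\in C(X)^{\otimes k}\otimes C(X)$ and the twisting-cochain factors act on~$\swee{y}{1}$ through the $C(G)$-action on~$C(F)$.

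The main step is to match the two sides termwise. A natural route is to first reduce to the universal case $F=G$ with the canonical left action. The action map $G\times F\to F$ is simplicial and $G$-equivariant; by~\cite[Prop.~7.1]{Franz:szczarba1}, Szczarba's shuffle is compatible with it, and the diagonals on both the twisted Cartesian product and the twisted tensor product propagate accordingly, so that a dgc identity for $F=G$ implies the general statement. For $F=G$, the comultiplicativity of~$t$ furnished by~\Cref{thm:main}---the identity expressing $\Delta_{C(G)}\circ t$ in terms of iterated products of~$t$ evaluated against the cooperations~$E^{k}$---is exactly the input required to convert $(\psi\otimes\psi)\circ\Delta_{\otimes}$ into $\Delta\circ\psi$.

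The main obstacle I expect is combinatorial bookkeeping. Szczarba's formula for~$\psi$ is an intricate signed sum indexed by shuffles decorated with iterated twistings, and a direct comparison of $\Delta\circ\psi$ with $(\psi\otimes\psi)\circ\Delta_{\otimes}$ requires a bijection of indexing sets matching signs and twisting factors. The role of~\Cref{thm:main} is to reconcile precisely the twisting-cochain contributions that arise when the Alexander--Whitney front/back split cuts across the middle of a shuffle; the remaining, untwisted portions are handled by the classical compatibility between the shuffle map and the Alexander--Whitney map in the Eilenberg--Zilber theorem.
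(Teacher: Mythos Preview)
Your reduction to~\(F=G\) is correct and matches the paper's first step. After that, however, your plan diverges from the paper in a way that leaves a genuine gap.

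The paper does \emph{not} invoke \Cref{thm:main} as an input. Having reduced to~\(F=G\), it makes a second reduction that you omit: the diagonals on both~\(C(X\times_{\tau}G)\) and~\(C(X)\otimes_{t}C(G)\) are right \(C(G)\)-equivariant, and \(\psi_{G}\) is a map of right \(C(G)\)-modules, so it suffices to verify \(\Delta\,\psi(x\otimes 1)=(\psi\otimes\psi)\,\Delta(x\otimes 1)\) for~\(x\in X_{n}\). This collapses the problem to a combinatorial identity involving only the operators~\(\hatSz_{\ii}\), which the paper then establishes by a computation \emph{parallel to}, not deduced from, the proof of \Cref{thm:main}: it uses the \(\hatSz\)-analogue of \Cref{thm:Sz-Psi} together with \Cref{thm:Psi-pp-degree}, \Cref{eq:epp-length0} and \Cref{thm:pp-refine} to match terms and signs, and then the same cancellation argument over the partial order on interval cuts.

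Your proposal to use \Cref{thm:main} as a black box and absorb the remainder into ``classical compatibility between the shuffle map and the Alexander--Whitney map'' is not justified. The twisted shuffle map~\(\psi\) is not assembled from~\(t\) and the ordinary shuffle in a way that lets you split the verification into a \(t\)-part (handled by \Cref{thm:main}) and an untwisted Eilenberg--Zilber part; the operators~\(\hatSz_{\ii}\) carry their own combinatorics, recorded in the unnamed lemma at the end of \Cref{sec:szczarba}, and it is those identities---not the comultiplicativity of~\(t\)---that drive the term-by-term matching. Without either the reduction to~\(y=1\) or a concrete factorisation of~\(\psi_{G}\) through the dga map~\(\OM\,C(X)\to C(G)\), your sketch does not supply the bijection of indexing sets with signs that the argument actually requires.
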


Using cubical chains, Kadeishvili--Saneblidze~\cite[Sec.~6]{KadeishviliSaneblidze:2005}
have previously obtained a dgc model for fibre bundles with simply connected base.\footnote{\label{fn:kadeishvili-saneblidze-pi1}%
  The assumption of simple connectedness is omitted in the statement of~\cite[Thm.~6.1]{KadeishviliSaneblidze:2005}, but used in the proof.
  That proof is actually problematic because it refers to a map of monoidal cubical sets~\(\operatorname{Sing}^{I}\Omega Y\to\operatorname{Sing}^{I}G\)
  whose existence is doubtful.}

Content and structure of this paper are as follows: We review background material in \Cref{sec:prelim}
and homotopy Gerstenhaber coalgebras in \Cref{sec:hgc}. After establishing a purely combinatorial result in \Cref{sec:bijection}
and discussing the Szczarba maps in \Cref{sec:szczarba} we prove \Cref{thm:main} in \Cref{sec:proof-main}.
The generalization of \Cref{thm:cobar-quiso} mentioned above appears in \Cref{sec:cobar}. In \Cref{sec:twisted-tensor}
we explain how homotopy Gerstenhaber coalgebra structures give rise to dgc structures on twisted tensor products,
and in \Cref{sec:morph-dgc} we prove \Cref{thm:psi-dgc}.
In \Cref{sec:shih} we compare Szczarba's twisted tensor product with a similar one due to Shih~\cite{Shih:1962}.
In \Cref{sec:cover} we deduce from our results a dga model for finite covering spaces
as well as certain spectral sequences 
studied by Papadima--Suciu~\cite{PapadimaSuciu:2010} and Rüping--Stephan~\cite{RuepingStephan:2019}.
In a similar vein, we obtain the (co)mul\-ti\-plica\-tive structure of the (co)ho\-mo\-log\-i\-cal Serre spectral sequence in \Cref{sec:serre}.
In the first appendix we relate our diagonal on~\(\OM\,C(X)\) to the one defined by Baues~\cite{Baues:1981} for \(1\)-reduced~\(X\).
In the second we fill a gap in the literature by showing that Szczarba's twisting cochain~\eqref{eq:twc-intro} 
and twisted shuffle map~\eqref{eq:psi-intro} are in fact well-de\-fined on normalized chain complexes.

\begin{acknowledgements}
  The authors thanks the anonymous referee for helpful suggestions
  and Hanibal Medina-Mardones for explaining his work and pointers to the literature.
\end{acknowledgements}

\section{Preliminaries}
\label{sec:prelim}

\subsection{Generalities}

We write \(\setzero{n}=\{0,\dots,n\}\) and \(\setone{n}=\{1,\dots,n\}\) for~\(n\ge0\).
We work over a commutative ring~\(\kk\) with unit;
all tensor products and chain complexes are over~\(\kk\).
Unless specified otherwise, all chain complexes are homological.
The degree of an element~\(c\) of a graded module~\(C\) is denoted by~\(\deg{c}\).
We write \(1_{C}\) for the identity map of~\(C\) and
\begin{equation}
  T_{B,C}\colon B\otimes C\to C\otimes B,
  \qquad
  b\otimes c\mapsto (-1)^{\deg{b}\deg{c}}\, c\otimes b
\end{equation}
for the transposition of factors in a tensor product of graded modules.
The suspension and desuspension operators are denoted by~\(\susp\) and~\(\desusp\), respectively.
We systematically use the Koszul sign rule, compare~\cite[Secs.~2.2~\&~2.3]{Franz:gersten}.

For clarity, we sometimes write \(1_{A}\) for the unit of a dga~\(A\) and \(1_{C}\) for the unit of a coaugmented dgc~\(C\).
A dg~bialgebra is a chain complex~\(A\) that is both a dga and a dgc in such a way that each pair of structure maps
are morphisms with respect to the other structure.

We write \(C(X)\) for the normalized chains on a simplicial set~\(X\).
We also write \(\tpartial\) for the last face map, that is, \(\tpartial x=\partial_{n}x\) for~\(x\in X_{n}\) with~\(n\ge1\).

\subsection{The cobar construction}

Let \(C\) be a dgc with coaugmentation~\(\iota\colon\kk\hookrightarrow C\), so that \(C=\kk\oplus\bar C\) where \(\bar C=\ker\epsilon\).
The (reduced) cobar construction of~\(C\) is
\begin{equation}
  \OM\,C = \bigoplus_{k\ge0} \OM_{k}\,C
  \qquad\text{where}\qquad
  \OM_{k}\,C = (\desusp\bar C)^{\otimes k},
\end{equation}
compare~\cite[Sec.~II.3]{HusemollerMooreStasheff:1974} or~\cite[\S 0]{Baues:1981}.
We write elements of~\(\OM\,C\) in the form
\begin{equation}
  \CobarEl{c_{1}|\dots|c_{k}} = \desusp\,c_{1} \otimes \dots \otimes \desusp\,c_{k}
\end{equation}
with~\(c_{1}\),~\dots,~\(c_{k}\in\bar C\).
The cobar construction is an augmented dga with concatenation as product and unit~\(1=\CobarEl{}\in\OM_{0}\,C=\kk\).
The differential and augmentation are determined by
\begin{equation}
  d\,\CobarEl{c} = -\CobarEl{d c} + (\desusp\otimes\desusp)\,\Deltabar\,c
  \qquad\text{and}\qquad
  \epsilon(\CobarEl{c}) = 0
\end{equation}
for~\(\CobarEl{c}\in\OM_{1}\,C\), where
\begin{equation}
  \label{eq:def-Deltabar}
  \Deltabar\,c = \Delta\,c - c\otimes 1 - 1\otimes c \in \bar C \otimes \bar C
\end{equation}
is the reduced diagonal.

\subsection{Twisting cochains}

Let \(C\) be a coaugmented dgc and \(A\) an augmented dga.
Recall that the complex~\(\Hom(C,A)\) is an augmented dga via
\begin{gather}
  d(f) = d_{A}\,f - (-1)^{\deg{f}}\,f\,d_{C},
  \qquad
  1_{\Hom(C,A)} = \iota_{A}\,\epsilon_{C}
  \\
  f\cup g = \mu_{A}\,(f\otimes g)\,\Delta_{C},
  \qquad
  \epsilon(f) = \epsilon_{A}\,f\,\iota_{C}(1)
\end{gather}
for~\(f\),~\(g\in\Hom(C,A)\). Here \(\iota_{A}\colon\kk\to A\) is the unit map, \(\iota_{C}\) is the coaugmentation of~\(C\),
and \(\epsilon_{C}\) and~\(\epsilon_{A}\) are the augmentations of~\(C\) and~\(A\), respectively.

A twisting cochain is a map~\(t\in\Hom(C,A)\)
of degree~\(-1\) (in the homological setting) such that
\begin{equation}
  \label{eq:def-tw}
  t\,\iota_{C} = 0,
  \qquad
  \epsilon_{A}\,t = 0,
  \qquad
  d(t) = t\cup t.
\end{equation}
It canonically induces the morphism of dgas
\begin{equation}
  \OM\,C \to A,
  \quad
  \CobarEl{c_{1}|\dots|c_{k}} \mapsto t(c_{1}) \cdots t(c_{k}).
\end{equation}
For example, the canonical twisting cochain
\begin{equation}
  t_{C}\colon C\to\OM\,C,
  \qquad
  c \mapsto \CobarEl{\bar c} \in\OM_{1} C
\end{equation}
corresponds to the identity map on~\(\OM\,C\).
Here we have written \(\bar c=c-\iota\,\epsilon(c)\) for the component of~\(c\) in~\(\bar C\).

\subsection{The shuffle map}
\label{sec:shuffle}

We recall the definition of the shuffle map for an arbitrary number of factors.
Given \(k\ge1\)~non-neg\-a\-tive integers~\(q_{1}\),~\dots,~\(q_{k}\) with sum~\(q\), a \((q_{1},\dots,q_{k})\)-shuffle
is a partition~\(\al=(\alpha_{1},\dots,\alpha_{k})\) of the set~\(\setzero{q-1}\).
We write \((-1)^{(\al)}=(-1)^{(\alpha_{1},\dots,\alpha_{k})}\) for its signature
and \(\Shuff(q_{1},\dots,q_{k})\) for the set of all such shuffles.
Observe that for~\(k=1\) there is only one \((q)\)-shuffle.

For simplicial sets~\(X_{1}\),~\dots,~\(X_{k}\) the shuffle map is given by
\begin{align}
  \label{eq:def-shuffle}
  \shuffle_{X_{1},\dots,X_{k}}\colon
  C_{q_{1}}(X_{1}) \otimes \dots \otimes C_{q_{k}}(X_{k}) &\to C_{q}(X_{1}\times \dots \times X_{k}), \\
  \notag x_{1}\otimes \dots \otimes x_{k} &\mapsto \sum_{\al} (-1)^{(\al)}\,(s_{\bar\alpha_{1}}x_{1},\dots,s_{\bar\alpha_{k}}x_{k})
\end{align}
where the sum is over all~\(\al\in\Shuff(q_{1},\dots,q_{k})\), and
\(\bar\alpha_{s}=\setzero{q-1}\setminus\alpha_{s}\) for~\(1\le s\le k\).

Using the shuffle map, one turns the chain complex of a simplicial group~\(G\) into a dga.
For~\(m\ge0\), the \(m\)-fold iterated multiplication is given by
\begin{equation}
  C(G)^{\otimes m} \xrightarrow{\shuffle_{G,\dots,G}} C(G\times\dots\times G) \xrightarrow{\iter{\mu}{m}_{*}} C(G)
\end{equation}
where \(\iter{\mu}{m}\colon G\times\dots\times G\to G\) is the \(m\)-fold product map.
This gives the identity map of~\(C(G)\) for~\(m=1\) and the unit map~\(\kk\hookrightarrow C(G)\) for~\(m=0\).

\subsection{Twisted Cartesian products}

Twisted Cartesian products are simplicial versions of fibre bundles, compare~\cite[Sec.~18]{May:1968} or~\cite[Sec.~1]{Szczarba:1961}.
More precisely, let \(X\) and~\(F\) be simplicial sets, and assume that the simplicial group~\(G\)
acts on~\(F\) from the left. The twisted Cartesian product~\(X\times_{\tau}F\)
differs from the usual Cartesian product~\(X\times F\) only by the zeroeth face map, which is
\begin{equation}
  \partial_{0}\,(x,y) = \bigl(\partial_{0}\,x,\tau(x)\,\partial_{0}\,y\bigr).
\end{equation}
The twisting function
\begin{equation}
  \tau\colon X_{>0} \to G
\end{equation}
is of degree~\(-1\) and for any~\(x\in X\) of dimension~\(n>0\) satisfies
\begin{align}
  \partial_{0}\,\tau(x) &= \tau(\partial_{0}\,x)^{-1}\,\tau(\partial_{1}\,x), \\
  \partial_{k}\,\tau(x) &= \tau(\partial_{k+1}\,x) \qquad \text{for~\(0<k<n\),} \\
  \label{eq:tau-sk}
   s_{k}\,\tau(x) &= \tau(s_{k+1}\,x) \qquad \text{for~\(0\le k<n\),} \\
\shortintertext{and for any~\(x\in X\) of dimension~\(n\ge0\) also}
  \label{eq:tau-s0}
  \tau(s_{0}\,x) &= 1 \in G_{n},
\end{align}
see~\cite[eq.~(1.1)]{Szczarba:1961},~\cite[Def.~18.3]{May:1968} or~\cite[Sec.~1.3]{HessTonks:2006}.

\subsection{Interval cut operations}

Let \(k\),~\(l\ge0\), and
let \(u\colon\setone{k+l}\to\setone{k}\) be a surjection such that \(u(i)\ne u(i+1)\) for all~\(0\le i<k+l\).
Berger--Fresse~\cite[Sec.~2]{BergerFresse:2004} have associated to~\(u\) an \newterm{interval cut operation}
\begin{equation}
  \AWu{u}\colon C(X) \to C(X)^{\otimes k},
\end{equation}
natural in the simplicial set~\(X\). On an \(n\)-simplex~\(x\in X\), it is given by
\begin{equation}
  \label{eq:def-AWu}
  \AWu{u}\,x = \sum_{\pp} (-1)^{\pos(\pp)+\perm(\pp)}\, x^{\pp}_{1} \otimes \dots \otimes x^{\pp}_{k}.
\end{equation}
Here the sum runs over all decompositions~\(\pp=(0=p_{0},p_{1},\dots,p_{k+l}=n)\) of~\(\setzero{n}\) into \(k+l\)~intervals.
If we think of these intervals as being labelled via~\(u\), then
\begin{equation}
  \label{eq:def-xpp-s}
  x^{\pp}_{s} = x(p_{i_{1}-1},\dots,p_{i_{1}},p_{i_{2}-1},\dots,p_{i_{2}},\dots,p_{i_{m}-1},\dots,p_{i_{m}})
\end{equation}
where \(i_{1}\),~\dots,~\(i_{m}\) enumerate the intervals with label~\(s\).
We refer to~\cite[\S 2.2.4]{BergerFresse:2004} for the definitions of 
the position sign exponent~\(\pos(\pp)\) and the permutation sign exponent~\(\perm(\pp)\).

Whenever we talk about the \newterm{length} of an interval~\([p_{i-1},\dots,p_{i}]\) in this paper,
we always mean its naive length~\(p_{i}-p_{i-1}\), not the possibly different length defined in~\cite[\S 2.2.3]{BergerFresse:2004}
to compute the position and permutation sign exponents.

\section{Homotopy Gerstenhaber coalgebras}
\label{sec:hgc}

Homotopy Gerstenhaber coalgebras (hgcs) are defined such that their duals are homotopy Gerstenhaber algebras (hgas),
see \Cref{rem:hga} below and also~\cite[p.~223]{KadeishviliSaneblidze:2005}. More precisely,
an \newterm{hgc} is a coaugmented dgc~\(C\) together with a family of cooperations
\begin{equation}
  E^{k}\colon C \to C^{\otimes k}\otimes C
\end{equation}
for~\(k\ge0\) such that
\begin{gather}
  \label{eq:E0}
  E^{0}=1_{C}, \\
  \label{eq:Ek-coaugm}
  \im E^{k} \subset \bar C^{\otimes k} \otimes \bar C  \qquad\text{for~\(k>0\)}, \\
  \label{eq:Ek-0}
  E^{k}(c) = 0
  \qquad
  \text{for~\(\deg{c}<k\).}
\end{gather}
Recall that \(\bar C=\ker\epsilon\) is the augmentation ideal
and \(\bar c=c-\iota\,\epsilon(c)\) the component of~\(c\) in~\(\bar C\).
There are further conditions on the maps~\(E^{k}\). Defining
\begin{equation}
  \EE^{k}\colon C \to (\desusp\,\bar C)^{\otimes k}\otimes \desusp\,\bar C
  = \OM_{k} C\otimes \OM_{1} C \subset \OM\,C\otimes \OM\,C
\end{equation}
for~\(k\ge0\) via
\begin{equation}
  \susp^{\otimes(k+1)}\,\EE^{k}(c) = E^{k}(\bar c),
\end{equation}
the assignment
\begin{align}
  \label{eq:def-EE}
  \EE\colon C &\to \OM\,C\otimes \OM\,C, \\
  \notag c &\mapsto \CobarEl{\bar c} \otimes 1 + \sum_{k=0}^{\infty}\EE^{k}(c)
  = \CobarEl{\bar c} \otimes 1 + 1 \otimes \CobarEl{\bar c} + \sum_{k=1}^{\infty}\EE^{k}(c)
\end{align}
is well-defined by~\eqref{eq:Ek-0}.
We require \(\EE\) to be a twisting cochain and the associated dga map
\begin{equation}
  \label{eq:hgc-diag-cobar}
  \Delta\colon \OM\,C \to \OM\,C\otimes \OM\,C,
  \qquad
  \bigCobarEl{ c_{1} \bigm| \dots \bigm| c_{k} } \mapsto \EE(c_{1})\cdots \EE(c_{k})
\end{equation}
to be coassociative, so that \(\OM\,C\) becomes a dg~bialgebra.

It will be convenient to rephrase these conditions in terms of the function
\begin{align}
  \label{eq:def-EEE}
  \EEE \colon C &\to \OM\,C \otimes C, \\
  \notag c &\mapsto (1\otimes p_{C})\,\EE(c) + 1 \otimes \iota\,\epsilon(c) =  1\otimes c + (1\otimes p_{C})\sum_{k=1}^{\infty}\EE^{k}(c)
\end{align}
of degree~\(0\) where
\begin{equation}
  \label{eq:dec-pC}
  p_{C}\colon \OM\,C \longrightarrow \OM_{1} C = \desusp \bar C \stackrel{\susp}{\longrightarrow} \bar C \hookrightarrow C
\end{equation}
is the composition of the canonical projection, the suspension map and the canonical inclusion.
Like the suspension map, \(p_{C}\) is of degree~\(1\).

\goodbreak

\begin{lemma}
  \label{thm:conditions-EEE}
  Let \(\EE\) and~\(\EEE\) be as in~\eqref{eq:def-EE} and~\eqref{eq:def-EEE}.
  \begin{enumroman}
  \item That \(\EE\) is a twisting cochain is equivalent to the two identities
    \begin{gather*}
      d(\EEE) = (\muomc \otimes 1_{C})(t_{C} \otimes \EEE)\,\Delta_{C}
      - (\muomc \otimes 1_{C})\,(1_{\OM C}\otimes T_{C,\OM C})\,(\EEE \otimes t_{C})\,\Delta_{C}, \\
      (1_{\OM C} \otimes \Delta_{C})\,\EEE = (\muomc \otimes 1_{C} \otimes 1_{C})\,(1_{\OM C} \otimes T_{C,\OM C} \otimes 1_{C})\,(\EEE \otimes \EEE)\,\Delta_{C} .
    \end{gather*}
  \item Assume that \(\EE\) is a twisting cochain.
    The coassociativity of the diagonal~\eqref{eq:hgc-diag-cobar} then is equivalent to the formula
    \begin{equation*}
      (\Delta_{\OM C} \otimes 1_{C})\,\EEE = (1_{\OM C} \otimes \EEE)\,\EEE.
    \end{equation*}
  \end{enumroman}
\end{lemma}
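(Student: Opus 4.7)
My plan is to translate each condition on $\EE$ into the stated conditions on $\EEE$ by exploiting the tight relationship between the two maps. The key observation is that $\EE$ takes values in the subspace $\OM\,C \otimes (\OM_{0}\,C \oplus \OM_{1}\,C)$ of $\OM\,C \otimes \OM\,C$, with $\OM_{0}$-part equal to $\langle \bar c \rangle \otimes 1$ and $\OM_{1}$-part recoverable from $\EEE$. A brief calculation yields the relation
\[
  \EE(c) = \langle \bar c \rangle \otimes 1 + (1_{\OM C} \otimes t_{C})\,\EEE(c),
\]
while conversely $(1_{\OM C} \otimes p_{C})\,\EE + 1 \otimes \iota\,\epsilon = \EEE$. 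In particular, $\EE$ and $\EEE$ carry the same information.

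For part~(i), I would substitute the above formula into the twisting cochain equation $d(\EE) = \EE \cup \EE$. Both sides land in $\OM\,C \otimes \OM_{\le 2}\,C$: the right-hand side because $\OM_{\le 1}\,C \cdot \OM_{\le 1}\,C \subseteq \OM_{\le 2}\,C$, and the left-hand side because the cobar differential on $\OM_{1}\,C$ has components only in $\OM_{1}\,C$ (from $-\langle dc \rangle$) and $\OM_{2}\,C$ (from the $\Deltabar$-piece). Hence the twisting cochain equation is equivalent to its componentwise projections to the three subspaces $\OM\,C \otimes \OM_{k}\,C$ with $k \in \{0, 1, 2\}$. The $k=0$ projection is trivial. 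The $k=1$ projection, extracted via $1 \otimes p_{C}$, reproduces the first stated identity: the contribution of $\langle \bar c \rangle \otimes 1$ to $\EE \cup \EE$ produces exactly the $t_{C} \otimes \EEE$ and $\EEE \otimes t_{C}$ terms on the right-hand side, while $d(\EEE)$ arises from the differential of $\EE$ after projection. The $k=2$ projection, with $\OM_{2}\,C$ identified with $\bar C \otimes \bar C$ via $\susp \otimes \susp$, converts the $\Deltabar$-piece of the left-hand side into a $(1 \otimes \Deltabar_{C})\,\EEE$-contribution and the $\OM_{1}\,C \cdot \OM_{1}\,C$ part of the right-hand side into the shuffled convolution of $\EEE$ with itself; after absorbing the discrepancy between $\Delta_{C}$ and $\Deltabar_{C}$ into the $1 \otimes \iota\,\epsilon$ correction implicit in $\EEE$, this becomes the second stated identity.

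For part~(ii), by hypothesis $\Delta\colon \OM\,C \to \OM\,C \otimes \OM\,C$ is a dga map, so both $(\Delta \otimes 1)\,\Delta$ and $(1 \otimes \Delta)\,\Delta$ are dga maps into $\OM\,C^{\otimes 3}$. Their agreement is therefore equivalent to agreement on the algebra generators in $\OM_{1}\,C$, that is, to $(\Delta \otimes 1)\,\EE = (1 \otimes \Delta)\,\EE$ on $C$. Both sides land in $\OM\,C \otimes \OM\,C \otimes \OM_{\le 1}\,C$, and projecting the rightmost factor to $\OM_{1}\,C$ via $1 \otimes 1 \otimes p_{C}$ while substituting the formula for $\EE$ turns this generator identity into the claimed $(\Delta_{\OM C} \otimes 1_{C})\,\EEE = (1_{\OM C} \otimes \EEE)\,\EEE$. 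The edge case $c = 1$ is checked directly, both sides reducing to $1 \otimes 1 \otimes 1$. Reversing the manipulations yields the converse.

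The main obstacle will be the bookkeeping of Koszul signs. The suspension $\susp$ has degree~$+1$, the twisting cochains $\EE$ and $t_{C}$ have degree~$-1$, the multiplication in $\OM\,C \otimes \OM\,C$ involves a transposition by $T_{C, \OM C}$, and $p_{C}$ does not commute on the nose with the cobar differential or the diagonal. I would handle this by tracking signs on a single generic summand $\EE^{k}(c)$, reading off each Koszul sign as it arises from the definitions of $\susp^{\otimes(k+1)}\,\EE^{k} = E^{k}$ and the convolution product, rather than attempting a purely abstract derivation.
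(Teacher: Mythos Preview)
Your proposal is correct and follows essentially the same approach as the paper: both project the twisting cochain equation $d(\EE)=\EE\cup\EE$ onto the components $\OM\,C\otimes\OM_{l}\,C$ for $l=0,1,2$ to obtain the two identities in part~(i), and project the coassociativity condition to $\OM\,C\otimes\OM\,C\otimes\OM_{1}\,C$ for part~(ii). Your explicit relation $\EE(c)=\langle\bar c\rangle\otimes 1+(1_{\OM C}\otimes t_{C})\,\EEE(c)$ and the reduction to algebra generators for part~(ii) make the mechanism slightly more transparent, but the underlying argument is the same; the paper additionally notes that $d(p_{C})=0$ is used for the first identity, a point worth keeping in mind when you carry out the sign bookkeeping.
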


\begin{proof}
  For the first part, we note that both sides of the twisting cochain condition \(d(\EE)=\EE\cup\EE\)
  only have components in~\(\OM\,C\otimes\OM_{l}\,C\) with~\(l\le2\). We project onto these components separately.
  The projections for~\(l=0\) are always equal. A direct calculation shows that the projections for~\(l=1\) and~\(l=2\)
  correspond to the two identities for~\(\EEE\) given above. It is helpful to distinguish the two cases~\(c=1\) and~\(c\in\bar C\),
  and in the second one to split up the diagonal as~\(\Delta\,c=c\otimes 1+1\otimes c+\Deltabar\,c\)
  where \(\Deltabar\) is the reduced diagonal~\eqref{eq:def-Deltabar}.
  For the first identity one also uses \(d(p_{C})=0\).

  The second claim follows similarly by projecting the coassociativity condition \((\Delta\otimes1)\,\Delta=(1\otimes\Delta)\,\Delta\)
  to~\(\OM\,C\otimes\OM\,C\otimes\OM_{1}\,C\).
\end{proof}

\begin{remark}
  \label{rem:hga}
  Let \(A=\Hom(C,\kk)\) be the augmented dga dual to the coaugmented dgc~\(C\).
  For~\(k\ge0\) define the transpose
  \begin{equation}
    E_{k}\colon A^{\otimes k}\otimes A \to A
  \end{equation}
  of the cooperation~\(E^{k}\) by
  \begin{equation}
    \bigpair{E_{k}(a_{1},\dots,a_{k};b),c} = (-1)^{k(\deg{a_{1}}+\dots+\deg{a_{k}}+\deg{b})}\,\bigpair{a_{1}\otimes\dots\otimes a_{k}\otimes b, E^{k}(c)}    
  \end{equation}
  for~\(c\in C\), compare~\cite[eq.~(4)]{Franz:gersten}.
  The operations~\(E_{k}\) then form an hga structure on~\(A\)
  that satisfies the analogues of the identities stated in~\cite[Sec.~6.1]{Franz:homog}.
  Note that in~\cite{Franz:homog} operations of the form~\(E_{k}(a;b_{1},\dots,b_{k})\) are used;
  see~\cite[Rem.~6.1]{Franz:homog} for their relation to the braces used by Gerstenhaber--Voronov~\cite{GerstenhaberVoronov:1995}.
  The explicit signs given there remain unchanged, except for an additional overall minus sign in the formula for~\(d(E_{k})\).
\end{remark}

Let \(t\colon C\to A\) be a twisting cochain, where \(C\) is an hgc and \(A\) a dg~bialgebra.
We say that \(t\) is \newterm{comultiplicative} if the induced dga map~\(\OM\,C \to A\)
is a morphism of dgcs and therefore of dg~bialgebras.
This definition is dual to Kadeishvili--Saneblidze's notion of a multiplicative twisting cochain \cite[Def.~7.2]{KadeishviliSaneblidze:2005}.
For example, the canonical twisting cochain~\(t_{C}\colon C\to\OM\,C\) is comultiplicative.

The normalized chains~\(C(X)\) on a simplicial set~\(X\ne\emptyset\) form an hgc in a natural way,
for any coaugmentation~\(\kk\hookrightarrow C(X)\) sending \(1\in\kk\) to some basepoint~\(x_{0}\in X\).
In terms of interval cut operations, the structure maps are given by
\begin{equation}
  \label{eq:def-E-CX}
  E^{k} = (-1)^{k}\,\AWu{e_{k}},
\end{equation}
that is,
\begin{equation}
  \label{eq:def-EE-CX}
  \EE^{k} = (-1)^{k(k-1)/2}\,(\desusp)^{\otimes(k+1)}\,\AWu{e_{k}}
\end{equation}
where
\begin{equation}
  e_{k} = (k+1,1,k+1,2,k+1,\dots,k+1,k,k+1).
\end{equation}
The sign difference in~\eqref{eq:def-EE-CX} compared to~\eqref{eq:def-E-CX}
stems from the fact that the (de)sus\-pen\-sion operators have degree~\(\pm1\), so that
\begin{equation}
  (\desusp)^{\otimes(k+1)}\,\susp^{\otimes(k+1)} = (-1)^{k(k+1)/2}
\end{equation}
because the sign changes each time an~\(\desusp\) is moved past an~\(\susp\) for a different tensor factor.
Note that \(\AWu{e_{0}}\) is the identity map as required by condition~\eqref{eq:E0}.
A look at the formula~\eqref{eq:def-xpp-s} moreover shows that
the intervals labelled \(1\),~\dots,~\(k\) in the surjection must have length at least~\(1\)
in order for the last factor~\(x^{\pp}_{k+1}\) of each term in the sum~\eqref{eq:def-AWu} for~\(\AWu{e_{k}}\)
to be non-degenerate, which confirms \eqref{eq:Ek-0} and also~\eqref{eq:Ek-coaugm}.

Explicitly, the induced diagonal on~\(\OM\,C(X)\) can be written as
\begin{equation}
  \label{eq:def-diag-cobar-CX}
  \Delta\,\CobarEl{x} = \EE(x) = \CobarEl{x} \otimes 1 +
  \sum_{k=0}^{n}\sum_{\pp}\,(-1)^{\epsilon(\pp)}\,\bigCobarEl{x^{\pp}_{1}\bigm|\dots\bigm|x^{\pp}_{k}} \otimes \bigCobarEl{x^{\pp}_{k+1}}
\end{equation}
for~\(x\in X_{n}\), where \(\pp\) runs through the cuts of~\(\setzero{n}\) prescribed by~\(e_{k}\). The sign exponent is given by
\begin{equation}
  \label{eq:def-epp}
  \epsilon(\pp) = \frac{k(k-1)}{2} + \Desusp(\pp) + \pos(\pp) + \perm(\pp)
\end{equation}
where
\begin{equation}
  \Desusp(\pp) = \sum_{s=1}^{k}\,(k+1-s)\,\deg{x^{\pp}_{s}} = \sum_{s=1}^{k}\,(k+1-s)(p_{2s}-p_{2s-1})
\end{equation}
is the sign exponent incurred by the desuspension operators in~\eqref{eq:def-EE}.
Note that for~\(n=0\) the formula~\eqref{eq:def-diag-cobar-CX} boils down
to~\(\Delta\,\CobarEl{x} = \CobarEl{x} \otimes 1 +  1 \otimes \CobarEl{x}\), so that
\(\CobarEl{x}\) is primitive for any \(0\)-simplex~\(x\in X_{0}\).\footnote{%
  Strictly speaking, we should write \(\CobarEl{\bar x}\), so that \(\CobarEl{\bar x}=0\) for the basepoint~\(x=x_{0}\).}

In  \Cref{sec:Baues} we show that for \(1\)-reduced~\(X\)
the diagonal~\eqref{eq:def-diag-cobar-CX} on~\(\OM\,C(X)\)
agrees with those defined by Baues \cite{Baues:1981}
and Hess--Parent--Scott--Tonks~\cite{HessEtAl:2006}.

\begin{lemma}
  \label{eq:epp-length0}
  Let \(k\),~\(n\ge 1\), and
  let \(\pp=(p_{0},\dots,p_{2k+1})\) be an interval cut of~\(\setzero{n}\) for the surjection~\(e_{k}\)
  such that all intervals with label~\(k+1\) have length~\(0\). Then
  \begin{equation*}
    \epsilon(\pp) \equiv \sum_{s=1}^{k}(s-1)(p_{2s}-p_{2s-1}-1) \pmod{2}.
  \end{equation*}
\end{lemma}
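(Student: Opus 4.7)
The plan is to compute each of the four summands in \eqref{eq:def-epp} directly and sum modulo~$2$. Set $q_s := p_{2s} - p_{2s-1}$ for $s = 1, \dots, k$; the hypothesis $p_{2s-1} = p_{2s-2}$ (for $s = 1, \dots, k+1$) forces $q_s = p_{2s} - p_{2s-2}$ and $q_1 + \dots + q_k = n$. The cut $\pp$ thus consists of the $k$ non-trivial intervals of naive lengths $q_1, \dots, q_k$, carrying the labels $1, \dots, k$ in order, alternating with $k+1$ empty intervals labelled $k+1$.

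With this notation, $\Desusp(\pp) = \sum_{s=1}^{k}(k+1-s)\,q_s$ is immediate. For $\pos(\pp)$ and $\perm(\pp)$ I would return to their definitions in \cite[\S 2.2.4]{BergerFresse:2004} as sums of length-weighted quantities ranging over the intervals of the cut; when every odd-indexed interval is empty these sums collapse considerably, and each exponent reduces to an explicit low-degree polynomial in the~$q_s$ whose mod-$2$ value can be read off. A short arithmetic rearrangement shows that, after absorbing $k(k-1)/2$ and $\Desusp(\pp)$ on the left and expanding $\sum_{s=1}^{k}(s-1)(q_s-1) = \sum_s (s-1)q_s - k(k-1)/2$ on the right, the claim is equivalent to $\pos(\pp) + \perm(\pp) \equiv kn \pmod{2}$, which is then verified by direct inspection of the simplified formulas.

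The main obstacle is bookkeeping the length convention. As the footnote following \eqref{eq:def-xpp-s} warns, the ``length'' Berger--Fresse use when computing $\pos$ and $\perm$ differs from the naive $p_i - p_{i-1}$ adopted here, so the empty naive intervals at odd positions are \emph{not} automatically inert for the sign exponents and must be weighted according to Berger--Fresse's own convention. Sanity checking the cases $k = 1, 2, 3$ should pin down both the overall sign and the precise polynomial dependence on the~$q_s$ before the general computation is committed to writing.
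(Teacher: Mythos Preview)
Your plan is correct and is exactly the paper's approach: the paper computes the four summands separately, obtaining \(\pos(\pp)\equiv p_{1}+p_{3}+\dots+p_{2k-1}\) and \(\perm(\pp)\equiv p_{1}+p_{3}+\dots+p_{2k-1}+nk\), so that \(\pos(\pp)+\perm(\pp)\equiv kn\) as you anticipated, and then combines this with \(\Desusp(\pp)\) and \(k(k-1)/2\) just as you outline. The only thing missing from your sketch is the actual evaluation of \(\pos\) and \(\perm\) from the Berger--Fresse definitions, which is straightforward once one notes that the inner \((k+1)\)-labelled intervals have Berger--Fresse length~\(1\) (not~\(0\)) and the final one has length~\(0\).
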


\begin{proof}
  Modulo~\(2\), we have
  \begin{align}
    \pos(\pp) &= p_{1} + p_{3} + \dots + p_{2k-1}, \\
    \perm(\pp) &= (p_{3}-p_{1}) + 2\cdot (p_{5}-p_{3}) + \dots + k\cdot(p_{2k+1}-p_{2k-1}) \\*
    \notag &\equiv p_{1} + p_{3} + \dots + p_{2k-1} + n\,k, \\
    \Desusp(\pp) &= \sum_{s=1}^{k}(k+1-s)(p_{2s}-p_{2s-1}) \\*
    \notag &\equiv n\,k + \sum_{s=1}^{k}(s-1)(p_{2s}-p_{2s-1}), \\
    \frac{k(k-1)}{2} &=\sum_{s=1}^{k}(s-1),
  \end{align}
  which gives the desired result.
\end{proof}

\begin{lemma}
  \label{thm:pp-refine}
  Let \(0\le m\le k\) and~\(n\ge1\), and let
  \begin{equation*}
    \pp: p_{0} \rr{k+1} p_{1} \rr{1} 
    \cdots \rr{m} p_{2m} \rrbf{k+1} p_{2m+1} \rr{m+1} \cdots
    \rr{k} p_{2k} \rr{k+1} p_{2k+1}
  \end{equation*}
  be an interval cut of~\(\setzero{n}\) corresponding to the surjection~\(e_{k}\).
  Assume that the interval corresponding to the \((m+1)\)-st occurrence of~\(k+1\) (highlighted above) has length at least~\(1\).
  Let \(\pp'\) be the interval cut for~\(e_{k+1}\) that is obtained from~\(\pp\) by replacing this interval by
  \begin{equation*}
    \cdots \rr{m} p_{2m} \rrbf{k+2} q \rrbf{m+1} q+1 \rrbf{k+2} p_{2m+1} \rr{m+2} \cdots
  \end{equation*}
  for some~\(p_{2m}\le q<p_{2m+1}\). Then
  \begin{equation*}
    \epsilon(\pp') = \epsilon(\pp).
  \end{equation*}
\end{lemma}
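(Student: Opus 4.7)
I would verify the identity $\epsilon(\pp')\equiv\epsilon(\pp)\pmod{2}$ by a direct term-by-term computation using the decomposition~\eqref{eq:def-epp}, comparing each of the four ingredients $\tfrac{k(k-1)}{2}$, $\Desusp$, $\pos$ and $\perm$ between~$\pp$ and~$\pp'$ and showing that the total change is even.

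First I would handle the easy contributions. The prefactor increases by $k$. For the desuspension exponent, observe that in passing to $\pp'$ the labels $1,\dots,m$ keep their lengths $p_{2s}-p_{2s-1}$ but have their weights bumped from $k+1-s$ to $k+2-s$; the new singleton labeled $m+1$ contributes $(k+1-m)\cdot 1$; and the old labels $m+1,\dots,k$ are relabeled as $m+2,\dots,k+1$ with lengths and weights unchanged. A direct reindexing gives
\[
  \Desusp(\pp')-\Desusp(\pp) \;=\; \sum_{s=1}^{m}(p_{2s}-p_{2s-1}) \;+\; (k+1-m).
\]

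Next I would tackle $\pos$ and $\perm$ by unpacking the Berger--Fresse definitions from \cite[\S 2.2.4]{BergerFresse:2004}. The refinement inserts two new background cut points $q$ and $q+1$ (producing two $(k+2)$-labeled intervals of lengths $q-p_{2m}$ and $p_{2m+1}-q-1$) together with one $(m+1)$-labeled singleton of length $1$. The former $(k+1)$-caesura at position $2m+1$ of $\pp$ is replaced in $\pp'$ by a pair of $(k+2)$-caesuras, and the labels $m+1,\dots,k$ of $\pp$ shift up to $m+2,\dots,k+1$. I would enumerate which interval pairs contribute new or altered terms in the Berger--Fresse bilinear formulas, using the length-$1$ constraint on the singleton to collapse bilinear contributions of that interval to linear ones, and the uniform relabeling of the later blocks to telescope the remaining sums. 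This should yield explicit mod-$2$ expressions for $\pos(\pp')-\pos(\pp)$ and $\perm(\pp')-\perm(\pp)$ in terms of $q$ and the neighbouring cut points. Finally, adding all four contributions modulo $2$, the constant $k$ from the prefactor, the affine term $(k+1-m)$ and the sum $\sum_{s=1}^m(p_{2s}-p_{2s-1})$ from $\Desusp$, and the remaining pieces from $\pos$ and $\perm$ should pair up and telescope to an even total.

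\textbf{Main obstacle.} The main difficulty lies in the second step: correctly computing the mod-$2$ changes in $\pos$ and $\perm$. These Berger--Fresse exponents are sums of products of interval lengths weighted by caesura status and label ordering, and the refinement introduces several new bilinear terms that have to be tracked carefully. The saving graces are that the newly inserted interval has length exactly $1$, which reduces its bilinear contributions to linear ones, and that the shift $m+1,\dots,k\to m+2,\dots,k+1$ is uniform, so that differences telescope. Once these two simplifications are in hand, the remaining mod-$2$ arithmetic is routine.
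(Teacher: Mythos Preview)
Your approach is exactly the paper's: decompose $\epsilon$ into its four summands and check that the changes sum to zero modulo~$2$. Your computations for the prefactor and for $\Desusp$ are correct and agree with the paper; note that modulo~$2$ your expression
\[
  \sum_{s=1}^{m}(p_{2s}-p_{2s-1}) + (k+1-m) \;\equiv\; p_{0}+p_{1}+\dots+p_{2m} + k + m + 1,
\]
which is precisely what the paper records. For the remaining two terms the paper simply states the outcome without derivation: $\pos$ changes by~$q$ and $\perm$ by $p_{0}+\dots+p_{2m}+q+m+1$ (both mod~$2$), after which the four contributions visibly cancel. So the ``main obstacle'' you flag is real but modest; once you unpack the Berger--Fresse formulas you should land on these two expressions.
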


\begin{proof}
  One verifies directly that modulo~\(2\) the exponent for the position sign changes by~\(q\), the one for the permutation sign by
  \begin{equation}
    p_{0} + \dots + p_{2m} + q + m + 1,
  \end{equation}
  the one coming from desuspensions by
  \begin{equation}
    p_{0} + \dots + p_{2m} + k + m + 1 
  \end{equation}
  and the one for the explicit sign by~\(k\). Hence there is no sign change in total.
\end{proof}

\section{A bijection}
\label{sec:bijection}

For~\(0\le l\le n\) we define
\begin{align}
  S_{n,l} &= \bigl\{\, \ii=(i_{1},\dots,i_{l})\in\N^{l} \bigm| \text{\(0\le i_{s}\le n-s\) for any~\(1\le s\le l\)}\,\bigr\} \\
  \notag &= \setzero{n-1} \times \setzero{n-2} \times \dots \times \setzero{n-l}
\end{align}
as well as \(S_{n}=S_{n,n}\).
The degree of an element~\(\ii\in S_{n,l}\) is
\begin{equation}
  \deg{\ii} = i_{1}+\dots+i_{l}.
\end{equation}
Note that \(S_{n,n}\) has \(n!\)~elements, and \(S_{n,0}\) has the empty sequence~\(\emptyset\) as unique element.

Let \(1\le k\le n\) and \(\pp=(p_{0},\dots,p_{k})\) where \(0=p_{0}<p_{1}<\dots<p_{k}=n\).
We set \(l=n-k\) and define
\begin{equation}
  \label{eq:def-Snl}
  S_{n-1}(\pp) = \bigl\{\, \ii\in S_{n-1,l} \bigm| \partial_{i_{l}+1}\cdots\partial_{i_{1}+1}\,\setzero{n} = \pp \,\bigr\}.
\end{equation}
Here \(\setzero{n}\) denotes the standard \(n\)-simplex, to which the given face operators are applied in the specified order.
We also set \(q_{s}=p_{s}-p_{s-1}\) for~\(1\le s\le k\).

We define a function
\begin{align}
  \label{eq:def-Psi}
  \Psi_{\pp}\colon S_{n-1}(\pp) &\to \Shuff(q_{1}-1,\dots,q_{k}-1)\times S_{q_{1}-1}\times \dots \times S_{q_{k}-1} \\*
  \notag \ii &\mapsto \bigl(\al=(\alpha_{1},\dots,\alpha_{k}),\jj_{1},\dots,\jj_{k}\bigr)
\end{align}
as follows:
Considering the condition~\eqref{eq:def-Snl},
we think of an element~\(\ii\in S_{n-1}(\pp)\) as describing a way of removing the \(l=n-k\)~elements
not appearing in the sequence~\(\pp\) from the \(n\)-simplex~\(\setzero{n}\).
For~\(1\le s\le k\) the element~\(\jj_{s}\in S_{q_{s}-1}\) similarly records the order
in which the elements between~\(p_{s-1}\) and~\(p_{s}\) are removed by~\(\ii\), ignoring all other removed elements.
The shuffle~\(\al\) keeps track of how the element removals of the intervals~\((p_{s-1},\dots,p_{s})\) are interleaved.
More precisely, we declare \(q-1\in\alpha_{s}\) if and only if
the face operator~\(\partial_{i_{q}+1}\) in~\eqref{eq:def-Snl} removes an element between~\(p_{s-1}\) and~\(p_{s}\).

\begin{example}
  \label{ex:Psi-pp}
  Take \(n=7\), \(k=3\),~\(\pp=(0,3,4,7)\) and~\(\ii=(5,0,0,2)\). The missing elements in~\((0,3,4,7)\)
  are removed in the order~\(6\),~\(1\),~\(2\),~\(5\). Those missing in~\((0,3)\) are removed in the order~\(1\),~\(2\),
  and those missing in~\((4,7)\) in the order~\(6\),~\(5\). We therefore have \(\jj_{1}=(0,0)\), \(\jj_{2}=\emptyset\) and~\(\jj_{3}=(1,0)\)
  as well as \(\alpha_{1}=\{1,2\}\), \(\alpha_{2}=\emptyset\) and~\(\alpha_{3}=\{0,3\}\).
\end{example}

Note that for~\(k=1\) the map\(~\Psi_{\pp}\) boils down to the identity map on~\(S_{n-1}\) because \(\Shuff(n-1)\) is a singleton.
Moreover, for~\(k=2\) we have \(S_{n-1,l}=S_{n-1,n-2}\cong S_{n}\) (since any~\(\ii\in S_{n-1}\) ends in~\(i_{n-1}=0\)),
and the maps~\(\Psi_{\pp}\) with~\(0<p_{1}<n\) combine to the bijection
\begin{equation}
  S_{n-1} \cong \bigcup_{q_{1}+q_{2}=n} \Shuff(q_{1}-1,q_{2}-1)\times S_{q_{1}-1} \times S_{q_{2}-1}
\end{equation}
described by Szczarba~\cite[Lemma~3.3]{Szczarba:1961}.

\begin{proposition}
  \label{thm:Psi-pp-degree}
  The map~\(\Psi_{\pp}\) is bijective, and in the notation of~\eqref{eq:def-Psi} we have
  \begin{equation*}
    \deg{\ii} \equiv (\al)
    + \sum_{s=1}^{k}\deg{\jj_{s}}
    + \sum_{s=1}^{k}(s-1)(q_{s}-1)
    \pmod{2}.
  \end{equation*}
\end{proposition}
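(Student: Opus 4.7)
The plan is to construct the inverse of $\Psi_{\pp}$ explicitly, read off each $i_r$ from the image data, and sum to obtain $\deg\ii$.

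\emph{Bijectivity.} Given $(\al, \jj_1, \dots, \jj_k)$ on the right-hand side, I reconstruct $\ii$ step by step. At step $r \in \setone{l}$, the condition $r - 1 \in \alpha_s$ singles out the interval $s = s(r)$ from which the $r$-th face operator removes a vertex, and the $t$-th entry $j_{s, t}$ of $\jj_s$ (where $t = t(r)$ is the number of elements of $\alpha_s$ not exceeding $r - 1$) selects that vertex among the surviving interior ones of interval $s$. Since the current simplex always consists of $p_0, \dots, p_k$ interspersed with the surviving interior vertices of each interval in their natural order, the position of the chosen vertex in the current simplex is determined, and hence so is $i_r$. The ranges fit exactly (the interior of interval $s$ shrinks by one after each removal from it), so this gives a two-sided inverse to $\Psi_{\pp}$.

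\emph{Formula for $i_r$ and summation.} Writing $N_{s'}(r) = q_{s'} - 1 - |\alpha_{s'} \cap \{0, \dots, r - 2\}|$ for the number of surviving interior vertices of interval $s'$ just before step $r$, and counting the boundary vertices $p_0, \dots, p_{s - 1}$ together with the surviving interior vertices of the earlier intervals, I obtain
\[
  i_r = (s - 1) + \sum_{s' < s} N_{s'}(r) + j_{s, t}
\]
for $s = s(r)$ and $t = t(r)$. Summing over $r = 1, \dots, l$ and regrouping, the constant terms contribute $\sum_r (s(r) - 1) = \sum_{s = 1}^k (s - 1)(q_s - 1)$ and the $j_{s, t}$ terms contribute $\sum_{s = 1}^k \deg{\jj_s}$. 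For the remaining middle sum, rewriting $\sum_{s' < s(r)} (q_{s'} - 1) = |\{r' : s(r') < s(r)\}|$ yields
\[
  \sum_r \bigl(|\{r' : s(r') < s(r)\}| - |\{r' < r : s(r') < s(r)\}|\bigr) = |\{(r, r') : r < r',\ s(r) > s(r')\}|,
\]
using that $s(r') < s(r)$ forces $r' \ne r$.

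\emph{Main difficulty.} The delicate step is identifying this last count, namely the number of pairs in which a later removal step comes from an earlier interval, with the inversion number of the shuffle $\al$; this gives $(\al)$ modulo $2$. Once the structure of the current simplex at each step is understood, that is the only subtle bookkeeping, and the rest of the argument is a direct unraveling of the definition of $\Psi_{\pp}$.
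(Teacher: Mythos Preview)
Your argument is correct. The explicit formula
\[
  i_r = (s(r)-1) + \sum_{s'<s(r)} N_{s'}(r) + j_{s(r),t(r)}
\]
checks out (the example in the paper confirms it), and your summation and regrouping are clean. The identification of the middle sum with the inversion number of~\(\al\) is also right: both your count and the inversion number enumerate unordered pairs \(\{r,r'\}\) (say \(r<r'\)) with \(s(r)>s(r')\). In fact your computation gives the \emph{exact} identity
\(\deg{\ii}=\operatorname{inv}(\al)+\sum_s\deg{\jj_s}+\sum_s(s-1)(q_s-1)\),
which is stronger than the mod-\(2\) statement in the proposition.

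This is a genuinely different route from the paper's proof. The paper does not compute \(i_r\) at all; instead it verifies the congruence for the single ``staircase'' sequence
\(\ii=(0,\dots,0,1,\dots,1,\dots,k-1,\dots,k-1)\)
(where both the shuffle and all \(\jj_s\) are trivial), and then observes that swapping two consecutive removals changes \(\deg{\ii}\) by~\(\pm1\) and simultaneously changes exactly one of \((\al)\) or some \(\deg{\jj_s}\) by~\(\pm1\), so the congruence is preserved. Connectivity under such swaps then finishes the argument. Your approach is more computational but yields an exact formula and makes the bijection with its inverse completely explicit; the paper's swap argument is shorter and more conceptual but only sees the parity. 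Either proof would serve the later applications, which use only the mod-\(2\) statement.
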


Remember from \Cref{sec:shuffle} that given a shuffle~\(\al=(\alpha_{1},\dots,\alpha_{k})\) we write \((\al)\) for the exponent of its signature.
For~\(k=2\) the above identity appears already in~\cite[Lemma~3.3]{Szczarba:1961}
and~\cite[Lemma~6]{HessTonks:2006}.\footnote{%
  Recall that Szczarba writes the signature of the shuffle~\((\nu,\mu)\) as~\(\operatorname{sgn}(\mu,\nu)\)
  and also from~\cite[p.~1866]{HessTonks:2006} that his sign exponent~\(\epsilon(i,n+1)\) equals \(n+\deg{\ii}\).
  Also note that the subscripts of the degeneracy operators~\(s_{\mu}\) and~\(s_{\nu}\) in~\cite{HessTonks:2006} should be swapped.}

\begin{proof}
  It is clear how to reverse the construction to obtain the inverse of~\(\Psi_{\pp}\).
  
  Regarding the claimed formula, we assume first that \(\ii\) is of the form
  \begin{equation}
    \label{eq:ii-first}
    \ii = \bigl(\, \underbrace{0,\dots,0}_{q_{1}-1},\underbrace{1,\dots,1}_{q_{2}-1},\dots,\underbrace{k-1,\dots,k-1}_{q_{k}-1} \,\bigr).
  \end{equation}
  Then the shuffle~\(\al=(\alpha_{1},\dots,\alpha_{k})\) is the identity map on~\(\setzero{l-1}\) and \(\jj_{s}=(0,\dots,0)\) for all~\(s\),
  from which we conclude that the formula holds.

  Consider two elements from~\(\setzero{l-1}\) that are removed one right after the other.
  Changing the order of the removals changes changes the degree of~\(\ii\) by~\(\pm1\).
  If the two removed values belong to the same, say the \(s\)-th, interval of~\(\pp\),
  then the degree of~\(\jj_{s}\) also changes by~\(\pm1\), and \(\al\) remains fixed. If the values belong to different intervals,
  then all~\(\jj_{s}\) remain the same, but the sign of the shuffle changes. Hence in any case the claimed identity
  is preserved.

  Starting from~\eqref{eq:ii-first}, we can reach any~\(\ii\in S_{n-1}(\pp)\)
  by repeating this swapping procedure. This completes the proof.  
\end{proof}

\section{The Szczarba operators}
\label{sec:szczarba}

\subsection{The twisting cochain}

We review the definition of Szczarba's twisting cochain \cite[pp.~200--201]{Szczarba:1961}
in the formulation given by Hess--Tonks~\cite[Sec.~1.4]{HessTonks:2006}.
Let \(X\) be a simplicial set and \(G\) a simplicial group, and let
\begin{equation}
  \tau\colon X_{>0} \to G
\end{equation}
be a twisting function.
It will be convenient in what follows to write \(\sigma(x)=\tau(x)^{-1}\) for~\(x\in X_{>0}\).

Szczarba~\cite[Thm.~2.1]{Szczarba:1961} has introduced the operators
\begin{align}
  \Sz_{\ii} \colon X_{n} &\to G_{n-1} \\
  \notag x &\mapsto \DD{0}{\ii}\,\sigma(x)\,\DD{1}{\ii}\,\sigma(\partial_{0}x)\cdots\DD{n-1}{\ii}\,\sigma((\partial_{0})^{n-1}x)
\end{align}
for~\(n\ge1\) and~\(\ii\in S_{n-1}\). In particular, one has \(\Sz_{\emptyset}x=\sigma(x)\).
We follows Hess--Tonks~\cite[Def.~5]{HessTonks:2006} in using the symbol~\(\Sz_{\ii}\) and the name \newterm{Szczarba operator}.
In terms of these operators, Szczarba's twisting cochain~\(t\colon C(X)\to C(G)\) is given for~\(x\in X_{n}\) by
\begin{equation}
  \label{eq:def-tsz}
  t(x) =
  \begin{cases}
    0 & \text{if \(n = 0\)}, \\
    \Sz_{\emptyset}x-1=\sigma(x)-1 & \text{if \(n = 1\)}, \\
    \sum_{\ii\in S_{n-1}} (-1)^{\deg{\ii}}\,\Sz_{\ii}x  & \text{if \(n \ge 2\)}.
  \end{cases}
\end{equation}
In \Cref{sec:szczarba-degen} we recall the definition of the simplicial operators~\(\DD{k}{\ii}\),
and we show that \(t\) is well-defined on normalized chains.

\begin{example}
  In low degrees, Szczarba's twisting cochain looks as follows. Simplices are indicated by vertex numbers.
  For example, a \(2\)-simplex~\(x\in X_{2}\) is written as~\(012\) and \(s_{1}\,\partial_{0}\,x\) as~\(122\).
  Note that the products are taken in the simplicial group~\(G\), not in the dga~\(C(G)\).
  \def\xx#1{#1}
  \def\icol#1{&\quad\hbox{\small\(\ii\)}&\hbox{\small\(=(#1)\)}}
  \def\icolempty{&\quad\hbox{\small\(\ii\)}&\hbox{\small\(=\emptyset\)}}
  \begin{align}
    t(\xx{01}) &= {} + \sigma(\xx{01})-1, \icolempty \\
    t(\xx{012}) &= {} + \sigma(\xx{012})\,\sigma(\xx{122}),  \icol{0} \\
    t(\xx{0123}) &= {} + \sigma(\xx{0123})\,\sigma(\xx{1223})\,\sigma(\xx{2333}) \icol{0,0} \\
    \notag &\qquad\!\!\!\! {} - \sigma(\xx{0113})\,\sigma(\xx{1233})\,\sigma(\xx{2333}), \icol{1,0} \\
    t(\xx{01234}) &= {} + \sigma(\xx{01234})\,\sigma(\xx{12234})\,\sigma(\xx{23334})\,\sigma(\xx{34444}) \icol{0,0,0} \\
    \notag &\qquad\!\!\!\! {} - \sigma(\xx{01224})\,\sigma(\xx{12224})\,\sigma(\xx{23344})\,\sigma(\xx{34444}) \icol{0,1,0} \\
    \notag &\qquad\!\!\!\! {} - \sigma(\xx{01134})\,\sigma(\xx{12334})\,\sigma(\xx{23334})\,\sigma(\xx{34444}) \icol{1,0,0} \\
    \notag &\qquad\!\!\!\! {} + \sigma(\xx{01114})\,\sigma(\xx{12344})\,\sigma(\xx{23344})\,\sigma(\xx{34444}) \icol{1,1,0} \\
    \notag &\qquad\!\!\!\! {} + \sigma(\xx{01124})\,\sigma(\xx{12224})\,\sigma(\xx{23444})\,\sigma(\xx{34444}) \icol{2,0,0} \\
    \notag &\qquad\!\!\!\! {} - \sigma(\xx{01114})\,\sigma(\xx{12244})\,\sigma(\xx{23444})\,\sigma(\xx{34444}) \icol{2,1,0}
  \end{align}
\end{example}

We need to understand how the Szczarba operators relate to the bijection~\(\Psi_{\pp}\)
introduced in \Cref{sec:bijection}.
Let \(n=k+l\) with~\(1\le k\le n\). We can write any \(\ii=(i_{1},\dots,i_{n-1})\in S_{n-1}\) in the form
\begin{equation}
  \ii = (\ii_{1},\ii_{2}) = (i_{1,1},\dots,i_{1,l},i_{2,1},\dots,i_{2,k-1})
\end{equation}
with~\(\ii_{1}\in S_{n-1}(\pp)\) and~\(\ii_{2}\in S_{k-1}\), where
\begin{equation}
  \pp = (p_{0},\dots,p_{k}) = \partial_{i_{l}+1}\cdots\partial_{i_{1}+1}\,\setzero{n}.
\end{equation}

\begin{lemma}
  \label{thm:Sz-Psi}
  Using this notation, we have
  \begin{align*}
    (\partial_{0})^{l} \Sz_{\ii} x &= \Sz_{\ii_{2}}x(p_{0},p_{1},\dots,p_{k}), \\
    \tpartial^{k-1}\Sz_{\ii} x &= s_{\bar\alpha_{1}}\Sz_{\jj_{1}}x(p_{0},\dots,p_{1})\cdots s_{\bar\alpha_{k}}\Sz_{\jj_{k}}x(p_{k-1},\dots,p_{k})
  \end{align*}
  where \(\Psi_{\pp}(\ii_{1})=(\al,\jj_{1},\dots,\jj_{k})\) and \(\bar\alpha_{s}=\setzero{l-1}\setminus\alpha_{s}\) for~\(1\le s\le k\).
\end{lemma}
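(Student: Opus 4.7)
The plan is to prove both identities by induction, exploiting that \(\partial_{0}\) and \(\tpartial\) are simplicial maps that are also group homomorphisms on~\(G_{n-1}\), so they distribute over the product defining~\(\Sz_{\ii}x\).

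For claim~(1), I would argue by induction on~\(l\). The base case \(l=0\) is immediate: then \(\ii_{1}=\emptyset\) forces \(\pp=\setzero{n}\) and \(x(p_{0},\dots,p_{k})=x\), while \(\ii_{2}=\ii\). For the inductive step, the key identity to establish is
\[
\partial_{0}\,\Sz_{\ii}x = \Sz_{(i_{2},\dots,i_{n-1})}(\partial_{i_{1}+1}x).
\]
This follows by distributing \(\partial_{0}\) over the product and using the simplicial identities relating \(\partial_{0}\) to the face and degeneracy operators inside each \(\DD{j}{\ii}\) (whose explicit definition is recalled in Section~\ref{sec:szczarba-degen}). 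Iterating this \(l\) times, the successive face operators \(\partial_{i_{l}+1}\cdots\partial_{i_{1}+1}\) built into the definition~\eqref{eq:def-Snl} of~\(S_{n-1}(\pp)\) are precisely what is needed to cut \(\setzero{n}\) down to~\(\pp\), leaving \(\Sz_{\ii_{2}}\) applied to the subsimplex \(x(p_{0},\dots,p_{k})\).

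For claim~(2), I would induct on~\(k\). The case \(k=1\) is trivial, since \(\Shuff(n-1)\) is a singleton (giving \(\bar\alpha_{1}=\emptyset\)), \(\jj_{1}=\ii_{1}=\ii\), \(\ii_{2}=\emptyset\), and \(\tpartial^{0}=\id\). For the inductive step, split off the first interval \([p_{0},p_{1}]\) of~\(\pp\): among the \(n\) factors in~\(\Sz_{\ii}x\), the bijection~\(\Psi_{\pp}\) of Section~\ref{sec:bijection} singles out exactly those whose removed index, as encoded in~\(\ii_{1}\), lies in \([p_{0},p_{1}]\), and records their internal order by~\(\jj_{1}\) and their positions within the combined sequence by~\(\alpha_{1}\). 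Distributing \(\tpartial^{k-1}\) across the product, these factors consolidate into \(s_{\bar\alpha_{1}}\Sz_{\jj_{1}}x(p_{0},\dots,p_{1})\), while the remaining factors, by the inductive hypothesis applied to the truncated data, consolidate into \(s_{\bar\alpha_{2}}\Sz_{\jj_{2}}x(p_{1},\dots,p_{2})\cdots s_{\bar\alpha_{k}}\Sz_{\jj_{k}}x(p_{k-1},\dots,p_{k})\). The degeneracies \(s_{\bar\alpha_{s}}\) precisely place each interval's subproduct in the correct slots of the ambient \((k-1)\)-simplex.

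The main obstacle is the combinatorial verification in the inductive step of~(2): one must confirm that \(\tpartial^{k-1}\) applied to the factors produces exactly the degeneracies prescribed by~\(\al\), and that excising the first-interval factors leaves a valid Szczarba product on the smaller data \((\ii_{1}',\ii_{2})\) with \(\Psi_{\pp'}(\ii_{1}')=(\alpha_{2},\dots,\alpha_{k};\jj_{2},\dots,\jj_{k})\). Tracking this compatibility between \(\Psi_{\pp}\) and the explicit form of~\(\DD{j}{\ii}\) is the heart of the calculation; once it is in place, the product of simplicial-identity manipulations reassembles the right-hand side factor by factor.
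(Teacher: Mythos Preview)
Your argument for the first identity matches the paper's: both reduce by induction on~\(l\) to the single-step formula \(\partial_{0}\Sz_{\ii}x=\Sz_{(i_{2},\dots,i_{n-1})}\partial_{i_{1}+1}x\). The paper simply cites this from Hess--Tonks~\cite[Lemma~6]{HessTonks:2006} rather than re-deriving it from the recursion for the~\(\DD{j}{\ii}\).

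Your inductive scheme for the second identity, however, does not work as stated. You propose to ``split off the first interval~\([p_{0},p_{1}]\)'' and apply the inductive hypothesis to the remainder, but the \(n\)~factors \(\DD{j}{\ii}\sigma((\partial_{0})^{j}x)\) of~\(\Sz_{\ii}x\) are not partitioned by the intervals of~\(\pp\); each is a full \((n{-}1)\)-simplex in~\(G\), and~\(\Psi_{\pp}\) acts on the sequence~\(\ii_{1}\), not on these factors. Even granting some grouping after applying~\(\tpartial^{k-1}\), the putative ``remaining product'' cannot be written as~\(\tpartial^{k-2}\Sz_{\ii''}x(p_{1},\dots,n)\): the degeneracies~\(s_{\bar\alpha_{s}}\) on the right-hand side all land in~\(G_{l}\) for the original~\(l=n-k\), whereas a Szczarba operator on the truncated simplex would live in~\(G_{l-p_{1}+1}\). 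You also omit the non-trivial base case~\(k=2\). The paper organises the induction differently: it takes the case~\(k=2\) from Hess--Tonks as a black box, assumes the \(k\)-factor formula, and applies one further~\(\tpartial\) to it. For each~\(s\ne r\) (where \(r\) is the unique index with~\(l-1\in\alpha_{r}\)) this~\(\tpartial\) merely deletes the top degeneracy~\(s_{l-1}\); for~\(s=r\) it commutes past~\(s_{\bar\alpha_{r}}\) and then the \(k=2\) case splits~\(\Sz_{\jj_{r}}x(p_{r-1},\dots,p_{r})\) into two pieces. The interval that gets split is thus the one containing the vertex removed by~\(\partial_{i_{l}+1}\), not the first interval; your ``split off the first interval'' move does not arise from the action of~\(\tpartial\).
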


\begin{proof}
  The case~\(l=0\) of the first identity is void. Given the definition~\eqref{eq:def-Snl}, it reduces for~\(l=1\) to the formula
  \begin{equation}
    \partial_{0} \Sz_{\ii} x = \Sz_{(i_{2},\dots,i_{n-1})} \partial_{i_{1}+1} x,
  \end{equation}
  which is stated in~\cite[Lemma~6]{HessTonks:2006}. The case~\(l\ge 2\) follows by iteration.

  The second identity is trivial for~\(k=1\), compare the discussion of~\(\Psi_{\pp}\) following \Cref{ex:Psi-pp}.
  For~\(k=2\) it is again given in~\cite[Lemma~6]{HessTonks:2006}. For larger~\(k\) it follows by induction:

  Assume the identity proven for~\(k\) and~\(l\) and consider \(k'=k+1\) and~\(l'=l-1\).
  The other values for the new situation are also written with a prime, that is, \(\pp'\), \(\ii'=(\ii_{1}',\ii_{2}')\)
  and \(\Psi_{\pp'}(\ii_{1}')=(\al',\jj_{1}',\dots,\jj_{2}')\).

  Let \(\pp=\partial_{i_{2,1}'+1}\pp'\), and let \(\hat p\) be the removed value.
  We split \(\ii'\) as~\(\ii'=(\ii_{1},\ii_{2})\) with~\(\ii_{1}=(i_{1,1}',\dots,i_{1,l-1}',i_{2,1}')\) and~\(\ii_{2}=(i_{2,2}',\dots,i_{2,k-1}')\)
  and corresponding values~\(\al\) and~\(\jj_{1}\),~\dots,~\(\jj_{k}\). Then
  \begin{align}
    \MoveEqLeft{\tpartial^{k}\Sz_{\ii'} x = \tpartial\,\tpartial^{k-1}\Sz_{\ii'} x} \\*
    \notag & = \tpartial \Bigl( s_{\bar\alpha_{1}}\Sz_{\jj_{1}}x(p_{0},\dots,p_{1})\cdots s_{\bar\alpha_{k}}\Sz_{\jj_{k}}x(p_{k-1},\dots,p_{k}) \Bigr) \\
  \intertext{By the definition of the shuffle~\(\al\), we have \(l-1\in\alpha_{r}\) if \(\partial_{i_{l}+1}\) removes an element
    between~\(p_{r-1}\) and~\(p_{r}\). Hence \(l-1\notin\alpha_{s}\) for~\(s\ne r\) and therefore}
    \notag & = s_{\bar\alpha_{1}\setminus\{l-1\}}\Sz_{\jj_{1}}x(p_{0},\dots,p_{1}) \\
    \notag &\qquad\qquad \cdots s_{\bar\alpha_{r}}\tpartial\Sz_{\jj_{r}}x(p_{r-1},\dots,p_{r}) \cdots s_{\bar\alpha_{k}\setminus\{l-1\}}\Sz_{\jj_{k}}x(p_{k-1},\dots,p_{k}).
  \end{align}
  Set \(\hat q_{1}=\hat p-p_{r-1}\) and~\(\hat q_{2}=p_{r}-\hat p\). Again by the case~\(l=2\) we have
  \begin{equation}
    \tpartial\Sz_{\jj_{r}}x(p_{r-1},\dots,p_{r}) = s_{\bar\beta_{2}}\Sz_{\hjj_{1}}x(p_{r-1},\dots,\hat p)\cdot s_{\bar\beta_{1}}\Sz_{\hjj_{2}}x(\hat p,\dots,p_{r})
  \end{equation}
  for a \((\hat q_{1}-1,\hat q_{2}-1)\)-shuffle~\((\beta_{1},\beta_{2})\)
  and sequences~\(\hjj_{1}\in S_{\hat q_{1}-1}\), \(\hjj_{2}\in S_{\hat q_{2}-1}\).
  We thus obtain the desired formula since
  \begin{gather}
    \jj_{1}' = \jj_{1}, \;\; \dots, \;\;
    \jj_{r}'=\hjj_{1}, \;\; \jj_{r+1}'=\hjj_{2},
    \;\; \dots, \;\; \jj_{k+1}' = \jj_{k}, \\
    \al' = (\alpha_{1},\dots,\alpha_{r-1},\gamma_{1},\gamma_{2},\alpha_{r+1},\dots,\alpha_{k})
  \end{gather}
  where the subsets~\(\gamma_{1}\),~\(\gamma_{2}\subset\setzero{l-2}\) are defined by
  \begin{equation}
    s_{\bar\gamma_{1}} = s_{\bar\alpha_{r}}\,s_{\bar\beta_{1}},
    \qquad
    s_{\bar\gamma_{2}} = s_{\bar\alpha_{r}}\,s_{\bar\beta_{2}}.
    \qedhere
  \end{equation}
\end{proof}

\subsection{The twisted shuffle map}

Let \(F\) be a left \(G\)-space.
We recall the definition of Szczarba's twisted shuffle map \cite[Thm.~2.3]{Szczarba:1961}
\begin{equation}
  \label{eq:def-psi}
  \psi=\psi_{F}\colon C(X)\otimes_{t}C(F) \to C(X\times_{\tau}F)
\end{equation}
in a notation inspired by Hess--Tonks.
For any~\(n\ge0\) and~\(\ii\in S_{n}\) we define the operator
\begin{align}
  \hatSz_{\ii}\colon X_{n} &\to (X\times_{\tau}G)_{n} = X_{n} \times G_{n}, \\
  \notag x &\mapsto \bigl( \DD{0}{\ii}\,x, \DD{1}{\ii}\,\sigma(x)\,\DD{2}{\ii}\,\sigma(\partial_{0}x)\cdots\DD{n}{\ii}\,\sigma((\partial_{0})^{n-1}x) \bigr),
\end{align}
which is interpreted as~\(\hatSz_{\emptyset}x=(x,1)\in X_{0}\times G_{0}\) for~\(n=0\) and~\(\ii=\emptyset\). Based on this we define the map
\begin{equation}
  \psi(x\otimes y) = \sum_{\ii\in S_{n}}\,(-1)^{\deg{\ii}}\,(\id_{X},\mu_{F})_{*}\,\shuffle\,\bigl(\,\hatSz_{\ii}x\otimes y\,\bigr),
\end{equation}
where \(n=\deg{x}\) as before, \(\shuffle\colon C(X\times_{\tau}G)\otimes C(F)\to C(X\times_{\tau}G\times F)\) is the shuffle map
and \(\mu_{F}\colon G\times F\to F\) the group action.\footnote{%
  In the definition of~\(\psi\) in~\cite[p.~201]{Szczarba:1961}
  the upper summation index should read ``\(p!\)''.}
For a proof that \(\psi\) descends to normalized chains see again~\Cref{sec:szczarba-degen}.

Given a decomposition \(n=k+l\) with~\(k\),~\(l\ge0\), we can write any~\(\ii\in S_{n}\) in the form~\(\ii=(\ii_{1},\ii_{2})\)
with~\(\ii_{1}\in S_{n}(\pp)\) and~\(\ii_{2}\in S_{k}\), where
\begin{equation}
  \pp = (0=p_{0},p_{1},\dots,p_{k+1}=n+1) = \partial_{i_{l}+1}\cdots\partial_{i_{1}+1}\,\setzero{n+1}.
\end{equation}
We also write \(q_{s}=p_{s}-p_{s-1}\) for~\(1\le s\le k+1\).

\begin{lemma}
  In the notation above, we have
  \begin{align*}
    (\partial_{0})^{l}\hatSz_{\ii} x &= \hatSz_{\ii_{2}} x(p_{1}-1,\dots,p_{k+1}-1), \\
    \tpartial^{k}\,\hatSz_{\ii} x &= s_{\bar\alpha_{1}}\hatSz_{\jj_{1}} x(0,\dots,p_{1}-1)
    \cdot s_{\bar\alpha_{2}}\Sz_{\jj_{2}} x(p_{1}-1,\dots,p_{2}-1) \\
    &\qquad\qquad\qquad\qquad\quad \cdots s_{\bar\alpha_{k+1}}\Sz_{\jj_{k+1}} x(p_{k}-1,\dots,p_{k+1}-1),
  \end{align*}
  where \(\Psi_{\pp}(\ii_{1})=(\al,\jj_{1},\dots,\jj_{k+1})\) and \(\bar\alpha_{s}=\setzero{l-1}\setminus\alpha_{s}\) for~\(1\le s\le k+1\).
\end{lemma}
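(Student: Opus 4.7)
The strategy is to adapt the proof of Lemma~\ref{thm:Sz-Psi} to the operator~$\hatSz_\ii$, which differs from~$\Sz_\ii$ in that it retains an $X$-component (namely $\DD{0}{\ii}x$) and shifts the subsequent operator indices by one, landing in $X_n\times G_n$ rather than~$G_{n-1}$. The right $G$-action $(y,g)\cdot h=(y,gh)$ on~$X\times_\tau G$ makes the products appearing on the right-hand side of the second identity well defined.

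For the first identity I would induct on~$l$. The case~$l=0$ is trivial, and the inductive step reduces to the base case
\[
    \partial_0 \hatSz_\ii x = \hatSz_{(i_2,\dots,i_n)}\,\partial_{i_1+1} x,
\]
where $\partial_0$ is the twisted face operator $(y,g)\mapsto(\partial_0 y,\tau(y)\,\partial_0 g)$. The twist by~$\tau(\DD{0}{\ii}x)$ is precisely what combines with $\partial_0(\DD{1}{\ii}\sigma(x))$ to produce the new leading factor $\DD{1}{(i_2,\dots)}\sigma(\partial_{i_1+1}x)$ of the right-hand side. This can be checked by unwinding the definitions and invoking the simplicial identities for the Szczarba operators~$\DD{k}{\ii}$, in parallel with \cite[Lemma~6]{HessTonks:2006}. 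Iterating yields the general formula, with $\partial_{i_l+1}\cdots\partial_{i_1+1}x=x(p_1-1,\dots,p_{k+1}-1)$ by the definition of~$\pp$.

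For the second identity I would induct on~$k$. The base case~$k=0$ is immediate: there is a single interval with $\alpha_1=\setzero{l-1}$ and hence $\bar\alpha_1=\emptyset$, so the right-hand side reduces to $\hatSz_\ii x$. For the inductive step, written as~$\tpartial^{k+1}=\tpartial\cdot\tpartial^k$, I would apply the inductive hypothesis to~$\tpartial^k$ and use that $\tpartial$ is a simplicial operation compatible with the product in~$X\times_\tau G$. All factors except the rightmost one lie in the image of degeneracies that absorb~$\tpartial$, so only $s_{\bar\alpha_{k+1}}\Sz_{\jj_{k+1}}x(p_k-1,\dots,p_{k+1}-1)$ is affected nontrivially. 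This factor can then be refined via the $k=2$ case of Lemma~\ref{thm:Sz-Psi}, producing the new interval endpoint~$\hat p$, the subsequences~$\hjj_1,\hjj_2$ and the shuffles~$\gamma_1,\gamma_2$ exactly as in the proof of that lemma.

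The main obstacle I anticipate is the $l=1$ base case of the first identity: unlike the pure Szczarba analogue in \cite[Lemma~6]{HessTonks:2006}, it requires tracking the twist through both the $X$- and $G$-components of $\hatSz_\ii x$ and checking that the twisted $\partial_0$ aligns correctly with the index shift in~$\hatSz$. Once this identity is in hand, both induction arguments proceed essentially verbatim from those of Lemma~\ref{thm:Sz-Psi}, the only conceptual difference being that the leftmost factor of every product remains an~$\hatSz$ (retaining the $X$-component) while all other factors are ordinary Szczarba terms in~$G$.
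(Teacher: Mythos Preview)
Your overall strategy—induct on~$l$ for the first identity and on~$k$ for the second, reducing to base cases drawn from Szczarba's paper—matches the paper's approach. However, two of your steps are incorrect as stated.

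For the first identity, the base case should read
\[
  \partial_0\,\hatSz_\ii\,x = \hatSz_{(i_2,\dots,i_n)}\,\partial_{i_1}\,x,
\]
not $\partial_{i_1+1}\,x$. The index shift by one relative to the $\Sz$-analogue in~\cite[Lemma~6]{HessTonks:2006} is exactly the effect of the extra $\DD{0}{\ii}\,x$ component that distinguishes $\hatSz$ from $\Sz$: here $\pp$ partitions $\setzero{n+1}$ rather than $\setzero{n}$, and for $l=1$ one checks directly that $x(p_1-1,\dots,p_{k+1}-1)=\partial_{i_1}\,x$. Iterating your version would yield $x(0,p_1,\dots,p_k)$ instead. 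The paper cites Szczarba~\cite[pp.~205--206]{Szczarba:1961} for the correct formula.

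For the second identity, the assertion that only the rightmost factor is affected nontrivially by~$\tpartial$ is false. Exactly as in the proof of \Cref{thm:Sz-Psi}, the last entry of~$\ii_1$ singles out a unique index~$r$ with $l-1\in\alpha_r$; applying $\tpartial$ then strips the top degeneracy from every factor with $s\ne r$ and acts nontrivially on the $r$-th factor, and $r$ can be any value in~$\{1,\dots,k+1\}$. When $r>1$ the affected factor is an $\Sz$-term and splits via the $k=2$ case of \Cref{thm:Sz-Psi}, as you say; but when $r=1$ it is the leading $\hatSz$-term, which must split into an $\hatSz$-factor times an $\Sz$-factor. For that you need the $k=1$ case of the present lemma itself, which the paper takes from~\cite[eq.~(4.5)]{Szczarba:1961}. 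The induction therefore requires both ingredients, not just \Cref{thm:Sz-Psi}.
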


\begin{proof}
  Apart from the trivial case~\(l=0\), the first formula follows by induction from the case~\(l=1\), that is,
  \begin{equation}
    \partial_{0}\,\hatSz_{\ii} x = \hatSz_{(i_{2},\dots,i_{n})} \partial_{i_{1}}\,x,
  \end{equation}
  which can be found in~\cite[pp.~205--206]{Szczarba:1961}
  as the discussion of the ``first term of~(4.1)'' there.

  The second formula is also trivial for~\(k=0\), and for~\(k=1\) it is contained in~\cite[eq.~(4.5)]{Szczarba:1961}.
  The extension to larger~\(k\) follows again by induction, based
  on the case~\(k=2\) of the present claim as well as the case~\(k=2\) of \Cref{thm:Sz-Psi}, using the same kind of reasoning as given there.
\end{proof}

\section{Proof of Theorem~\ref{thm:main}}
\label{sec:proof-main}

Let \(X\) be a simplicial set, \(G\) a simplicial group and \(\tau\colon X_{>0}\to G\) a twisting function.
Explicitly, the Szczarba map~\eqref{eq:OMCX-CG} is given by
\begin{equation}
  \Sz\colon \OM \,C(X) \to C(G),
  \qquad
  \bigCobarEl{ x_{1} \mid \cdots \mid x_{m} } \mapsto t(x_{1})\cdots t(x_{m})
\end{equation}
where \(t\colon C(X)\to C(G)\) is Szczarba's twisting cochain as defined in~\eqref{eq:def-tsz}.
Since we are looking at a multiplicative map between bialgebras, we only have to show
\begin{equation}
  \label{eq:main-ts}
  \Delta_{C(G)}\,\Sz\,\CobarEl{x} = (\Sz\otimes\Sz)\,\Delta_{\OM C(X)}\,\CobarEl{x}
\end{equation}
for any~\(x\in X\), say of degree~\(n\).
If \(n=0\), then \(\CobarEl{x}\) is primitive and annihilated by~\(\Sz\), so that \eqref{eq:main-ts} holds.
We therefore assume \(n\ge1\) for the rest of the proof.

The left-hand side of~\eqref{eq:main-ts} equals
\begin{align}
  \label{eq:main-ts-lhs}
  \Delta\,\Sz\,\CobarEl{x} &= \Delta\,t(x) = \sum_{k=1}^{n}\tpartial^{k-1}\,t(x)\otimes(\partial_{0})^{l}\,t(x) \\*
  \notag &= \sum_{k=1}^{n}\,\sum_{\ii\in S_{n-1}}(-1)^{\deg{\ii}}\,\tpartial^{k-1}\,\Sz_{\ii}x\otimes(\partial_{0})^{l}\,\Sz_{\ii}x
\end{align}
where we have again used the abbreviation~\(l=n-k\).
Using the explicit formula~\eqref{eq:def-diag-cobar-CX} for the diagonal, we can write
the right-hand side of~\eqref{eq:main-ts} in the form
\begin{equation}
  \label{eq:main-ts-rhs}
  (\Sz\otimes\Sz)\,\Delta\,\CobarEl{x} = t(x)\otimes 1 + \sum_{k=0}^{n} \sum_{\pp}\, (-1)^{\epsilon(\pp)}\,t(x^{\pp}_{1})\cdots t(x^{\pp}_{k})\otimes t(x^{\pp}_{k+1})
\end{equation}
where \(\pp=(p_{0},p_{1},\dots,p_{2k+1})\) ranges over the cuts of~\(\setzero{n}\) into \(2k+1\)~intervals corresponding to the surjection~\(e_{k}\).
We are going to pair off the summands of the expressions~\eqref{eq:main-ts-lhs} and~\eqref{eq:main-ts-rhs}.
We write \(q_{s}=p_{2s}-p_{2s-1}\) for~\(1\le s\le k\) and
\(\ell(\pp)\) for the sum of the lengths of the intervals in~\(\pp\) corresponding to the final value~\(k+1\).

Assume \(\ell(\pp)=0\), so that the \(k\)~intervals labelled \(1\),~\dots,~\(k\) cover the whole interval~\(\setzero{n}\).
From the definition of~\(t\) we get
\begin{equation}
  \label{eq:txppk1}
  t(x^{\pp}_{k+1}) = \sum_{\ii_{2}\in S_{k-1}} (-1)^{\deg{\ii_{2}}}\,\Sz_{\ii_{2}} x^{\pp}_{k+1},
\end{equation}
and together with that of the shuffle map~\eqref{eq:def-shuffle} also
\begin{multline}
  \label{eq:txpp1k}
  (-1)^{\epsilon(\pp)}\,t(x^{\pp}_{1})\cdots t(x^{\pp}_{k}) = \\
  \sum (-1)^{\epsilon(\pp) + (\al) + \sum_{s}\deg{\jj_{s}}}
  \, s_{\bar\alpha_{1}} \Sz_{\jj_{1}} x^{\pp}_{1}
  \cdots s_{\bar\alpha_{k}} \Sz_{\jj_{k}} x^{\pp}_{k} \\
  {} + \text{additional terms with fewer than~\(k\) factors.}
\end{multline}
Here the sum is over all \((q_{1}-1,\dots,q_{k}-1)\)-shuffles~\(\al=(\alpha_{1},\dots,\alpha_{k})\)
as well as over all~\(\jj_{1}\in S_{q_{1}-1}\),~\dots,~\(\jj_{k}\in S_{q_{k}-1}\).
The additional terms indicated above arise whenever we have~\(q_{s}=1\) for some~\(s\)
because of the extra term~\(-1\in C(G)\) produced by~\(t\) in the case of a \(1\)-simplex.

Consider the case~\(k>1\).
According to \Cref{thm:Sz-Psi},
the expressions~\((\partial_{0})^{l}\,t(x)\) in~\eqref{eq:main-ts-lhs} that give terms of the form~\eqref{eq:txppk1}
are indexed by the~\(\ii=(\ii_{1},\ii_{2})\in S_{n-1}\) with~\(\ii_{1}\in S_{n-1}(\pp)\) and~\(\ii_{2}\in S_{k-1}\).
By the same lemma, the terms~\(\tpartial^{k-1}\,t(x)\)
for all such~\(\ii_{1}\) give exactly the terms in the sum formula of~\eqref{eq:txpp1k}.
\Cref{eq:epp-length0} and~\Cref{thm:Psi-pp-degree} show that also the signs work out correctly since
\(\deg{\ii}=\deg{\ii_{1}}+\deg{\ii_{2}}\) and
\begin{equation}
  \deg{\ii_{1}}
  = (\al) + \sum_{s=1}^{k} \deg{\jj_{s}} + \sum_{s=1}^{k}(s-1)(q_{s}-1)
  = \epsilon(\pp) + (\al) + \sum_{s=1}^{k} \deg{\jj_{s}}.  
\end{equation}

If \(k=1\), then \(x^{\pp}_{1}=x\), ~\(x^{\pp}_{2}=x(0,n)\) is of degree~\(1\), and~\(\epsilon(\pp)=0\).
In addition to the terms discussed in the preceding paragraph,
we get a~\(-1\) on the right-hand side of~\eqref{eq:txppk1} and therefore \(-t(x) \otimes 1\) in~\eqref{eq:main-ts-rhs},
which cancels with the very first term in the same formula.

We now argue that the decompositions~\(\pp\) with~\(\ell(\pp)>0\)
(including the only possible decomposition for~\(k=0\)) lead to terms in the sum~\eqref{eq:main-ts-rhs} that
cancel out with the additional terms in~\eqref{eq:txpp1k} for~\(\ell(\pp)=0\).

Given two decompositions~\(\pp\) and~\(\pp'\) for the surjections~\(e_{k}\) and~\(e_{k'}\), we write \(\pp'\ge\pp\)
if \(\pp'\) can be obtained from~\(\pp\) by zero or more applications of the ``refinement procedure'' described in \Cref{thm:pp-refine}.
This gives a partial order on the set of all such decompositions.

For any decomposition~\(\pp\) there are exactly \(2^{\ell(\pp)}\)~decompositions~\(\pp'\ge\pp\).
In the maximal such~\(\pp'\), all intervals with the final label~\(k+1\) in~\(\pp\) have been subdivided into intervals of length~\(1\)
and relabelled with non-final values, separated by intervals of length~\(0\) labelled~\(k+1\). In particular, \(\ell(\pp')=0\).
Conversely, there are exactly \(2^{\ell_{1}(\pp)}\)~decompositions~\(\pp'\le\pp\), where \(\ell_{1}(\pp)\) is the number of intervals
of length~\(1\) having non-final labels. The minimal such~\(\pp'\) has no intervals of this kind.

\begin{example}
  Take \(k=1\) and the decomposition
  \begin{equation}
    \pp\colon 0 \rr{2} 0 \rr{1} 1 \rr{2} 3.
  \end{equation}
  The maximal~\(\pp'\ge\pp\) and the minimal~\(\pp''\le\pp\) are as follows.
  Subdivided or combined intervals are indicated in boldface.
  \begin{gather}
    \pp'\colon 0 \rr{4} 0 \rr{1} 1 \rr{\mathbf{4}} 1 \rr{\mathbf{2}} 2 \rr{\mathbf{4}} 2 \rr{\mathbf{3}} 3 \rr{\mathbf{4}} 3 \qquad (k'=3), \\
    \pp''\colon 0 \rr{\mathbf{1}} 3 \qquad (k''=0).
  \end{gather}
\end{example}

Note that we have
\begin{equation}
  x^{\pp}_{k+1} = x^{\pp'}_{k'+1}
\end{equation}
whenever \(\pp\) and~\(\pp'\) are comparable.
We therefore look at a minimal~\(\pp\) in our ordering and the term~\(x^{\pp}_{k+1}\) it produces.
As the added intervals of any~\(\pp'\ge\pp\) are all of length~\(1\),
the corresponding terms~\(t(x^{\pp'}_{s})\) in
\begin{equation}
  \label{eq:txpp1k-prime}
  (-1)^{\epsilon(\pp')}\,t(x^{\pp'}_{1})\cdots t(x^{\pp'}_{k'})
\end{equation}
all contain \(-1\in C(G)\). The summand
\begin{equation}
  \label{eq:ell-pp-a}
  (-1)^{\epsilon(\pp') + (\al') + \sum_{s'}\deg{\jj'_{s'}} + \ell_{1}(\pp')}\,
  \prod_{\substack{1\le s'\le k'\\q'_{s'}\ne 1}} s_{\bar\alpha_{s'}} \Sz_{\jj'_{s'}} x^{\pp'}_{s'}
  =\vcentcolon (-1)^{\ell_{1}(\pp')}\,a
\end{equation}
therefore appears in the product~\eqref{eq:txpp1k-prime}.
We claim that the expression~\(a\) only depends on~\(\pp\). More precisely, we have
\begin{equation}
  a = (-1)^{\epsilon(\pp) + (\al) + \sum_{s}\deg{\jj_{s}}}\,
  s_{\bar\alpha_{1}} \Sz_{\jj_{1}} x^{\pp}_{1}
  \cdots s_{\bar\alpha_{k}} \Sz_{\jj_{k}} x^{\pp}_{k}.
\end{equation}
This is because an interval of length~\(q'_{s'}=1\) leads to~\(\alpha'_{s'}=\emptyset\) and~\(\jj'_{s'}=\emptyset\),
while the remaining~\(\alpha'_{s'}\) and~\(\jj'_{s'}\) are not affected and appear as~\(\alpha_{s}\) and~\(\jj_{s}\)
for some index~\(s\le s'\).
Moreover, we have \(\epsilon(\pp')=\epsilon(\pp)\) by a repeated application of \Cref{thm:pp-refine}.

If \(\ell(\pp)>0\), then we get \(2^{\ell(\pp)}\)~terms with alternating signs, so that
\begin{equation}
  \sum_{\pp'\ge\pp} (-1)^{\ell_{1}(\pp')}\,a \otimes t(x^{\pp'}_{k'+1})
  = \sum_{\pp'\ge\pp} (-1)^{\ell_{1}(\pp')}\,a \otimes t(x^{\pp}_{k+1}) = 0.
\end{equation}
The only terms in~\eqref{eq:main-ts-rhs} not appearing in such a sum are \(t(x)\otimes 1\)
plus those written out in~\eqref{eq:txpp1k} for~\(\pp\) with~\(\ell(\pp)=0\),
and we have seen already that they add up to~\eqref{eq:main-ts-lhs}.
This completes the proof.

\section{The extended cobar construction and the loop group}
\label{sec:cobar}

Let \(X\) be a reduced simplicial set (that is, having a unique \(0\)-simplex), and let
\(\GG X\) be its Kan loop group, compare~\cite[Def.~26.3]{May:1968}.
(Its topological realization~\(|\Kan X|\) is a model for the based loop space~\(\Omega|X|\)
as a topological monoid, see~\cite[\S 1.8, Prop.~3.3]{Berger:1995}.)
Let \(\tau\colon X_{>0}\to \GG X\) be the canonical twisting function, and
let \(t\) be Szczarba's twisting cochain associated to it.

Hess--Tonks have defined an extended cobar construction~\(\OMex\,C(X)\) such that
the canonical dga map~\(\OM\,C(X)\to C(\GG X)\) extends to a dga map
\begin{equation}
  \label{eq:def-phi}
  \phi\colon \OMex\,C(X)\to C(\GG X),
\end{equation}
see~\cite[Thm.~7]{HessTonks:2006}.
They moreover showed that \(\phi\) is a strong deformation retract of chain complexes
such that all maps involved are natural in~\(X\) \cite[Thm.~15]{HessTonks:2006}.

Let us recall the definition of~\(\OMex\,C(X)\) in the form given by Rivera--Saneblidze~\cite[Sec.~4.2]{RiveraSaneblidze:2019}.
Write \(C=C(X)\), and let \(G\) be the free group on generators~\(g_{x}\) where \(x\) runs through
the non-degenerate \(1\)-simplices of~\(X\). We define a new dgc~\(\Cex\) by~\(\Cex_{n}=C_{n}\) for~\(n\ne1\)
and~\(\Cex_{1}=\kk[G]\), the group algebra of~\(G\). We set \(d\,g=0\), \(\epsilon(g)=0\)
and~\(\Delta\,g=g\otimes 1_{C}+1_{C}\otimes g\) for any~\(g\in G\).
We embed \(C\) into~\(\Cex\) by sending \(x\)~as before to~\(g_{x}-1_{G}\).
The dga~\(\OMex\,C(X)\) is the quotient of the usual cobar construction~\(\OM\,\Cex\)
by the two-sided dg~ideal generated by the cycles~\(\CobarEl{a|b} - \CobarEl{a b}\) for~\(a\),~\(b\in\Cex_{1}\)
as well as~\(\CobarEl{1_{G}}-\smash{1_{\OM\Cex}}\).
By abuse of notation, we write elements of~\(\OMex\,C(X)\) like those of~\(\OM\,\Cex\).

We extend Szczarba's twisting cochain~\(t\) to a linear map~\(\tex\colon \Cex\to C(\GG X)\)
by defining \(\tex(g_{x})=\sigma(x)\) for any non-degenerate \(1\)-simplex~\(x\in X\) and
taking its multiplicative extension to~\(G\subset\Cex_{1}\). The result is again a twisting cochain.
The induced dga morphism~\(\OM\,\Cex\to C(\GG X)\) descends to~\(\OMex\,C(X)\), where it defines the map~\(\phi\) from~\eqref{eq:def-phi}.

We extend the augmentation and the diagonal from~\(\OM\,C(X)\) to~\(\OM\,\Cex\) by setting
\begin{equation}
  \label{eq:def-OMex-dgc}
  \epsilon(\CobarEl{g}) = 1
  \qquad\text{and}\qquad
  \Delta\,\CobarEl{g} = \CobarEl{g} \otimes \CobarEl{g}
\end{equation}
for any~\(g\in G\). This induces well-defined maps on~\(\OMex\,C(X)\).

\begin{proposition}
  \label{thm:excobar-comult}
  Let \(X\) be a reduced simplicial set.
  With the structure maps given above, \(\OMex\,C(X)\) becomes a dg~bialgebra
  and \(\phi\) a quasi-isomorphism of dg~bialgebras.
\end{proposition}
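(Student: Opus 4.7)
The plan is to (i) check that the proposed $\Delta$ and $\epsilon$ descend from $\OM\Cex$ to the quotient $\OMex C(X)$, (ii) verify the dg-bialgebra axioms on $\OMex C(X)$, and (iii) check that $\phi$ is comultiplicative; the quasi-isomorphism property is already available from Hess--Tonks~\cite[Thm.~15]{HessTonks:2006}. Everything reduces to computations on the two kinds of generators of $\OM\Cex$: those $\CobarEl{c}$ coming from $c\in C(X)$ of degree $\ne 1$ (handled by the hgc structure from \Cref{sec:hgc}), and those $\CobarEl{g}$ coming from $g\in G\subset\Cex_1$ (handled by the new grouplike rule).

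For (i), $\epsilon$ vanishes on the ideal generators because $\epsilon(\CobarEl{a}\CobarEl{b})=\epsilon(\CobarEl{ab})=1$ for $a,b\in G$ (extended linearly for $a,b\in\kk[G]$) and $\epsilon(\CobarEl{1_G})=1$. Since $\Delta$ is a dga map, the grouplike rule $\Delta\CobarEl{g}=\CobarEl{g}\otimes\CobarEl{g}$ yields
\begin{equation*}
\Delta(\CobarEl{a|b}-\CobarEl{ab}) = (\CobarEl{a|b}-\CobarEl{ab})\otimes\CobarEl{a|b} + \CobarEl{ab}\otimes(\CobarEl{a|b}-\CobarEl{ab}),
\end{equation*}
which lies in $I\otimes\OM\Cex+\OM\Cex\otimes I$, and similarly for $\CobarEl{1_G}-1$, so both maps descend to the quotient.

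For (ii), coassociativity and the commutation $[d,\Delta]=0$ reduce, by multiplicativity, to checks on generators. On $\CobarEl{c}$ with $c\in C(X)_{>1}$ we invoke \Cref{thm:conditions-EEE} applied to the hgc structure on $C(X)$; on $\CobarEl{g}$ with $g\in G$, one has $d\CobarEl{g}=0$ (since $g$ is a cycle in $\Cex$ and $\Deltabar g=0$) and the coassociativity identity is the trivial one on $\CobarEl{g}^{\otimes 3}$. A consistency check is needed in degree~$1$: for a non-degenerate $x\in X_1$, whose image in $\OMex C(X)$ equals $\CobarEl{g_x}-1$, the grouplike rule gives $\Delta(\CobarEl{g_x}-1)=\CobarEl{g_x}\otimes\CobarEl{g_x}-1\otimes 1$, and one verifies that the hgc formula~\eqref{eq:def-diag-cobar-CX} applied to $\CobarEl{x}$ contributes $\CobarEl{x}\otimes 1+1\otimes\CobarEl{x}+\CobarEl{x}\otimes\CobarEl{x}$ (the last term coming from the unique interval cut $\pp=(0,0,1,1)$ at $k=1$) and reduces to the same expression under the substitution $\CobarEl{x}=\CobarEl{g_x}-1$.

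For (iii), $\phi$ is a dga morphism by construction, so it suffices to show $\Delta_{C(\GG X)}\phi=(\phi\otimes\phi)\Delta$ on generators $\CobarEl{c}$. For $c\in C(X)_{>1}$ this is exactly \Cref{thm:main}. For $g\in G$, the element $\tex(g)\in\GG X_0\subset C(\GG X)_0$ is a group element, hence grouplike in $C(\GG X)$, so $\Delta\tex(g)=\tex(g)\otimes\tex(g)=(\phi\otimes\phi)\Delta\CobarEl{g}$; compatibility in degree~$1$ follows from the check already made in (ii). Preservation of augmentations is immediate. The main obstacle is the degree-$1$ sign verification in (ii): one must confirm that the nontrivial hgc contribution from the interval cut $\pp=(0,0,1,1)$ has the right sign to match the grouplike formula after reducing modulo the ideal. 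The remaining conceptual point is just that $\phi$ extends Szczarba's dga morphism from $\OM C(X)$ to $\OMex C(X)$, so \Cref{thm:cobar-quiso} gets upgraded to the reduced (rather than $1$-reduced) setting without further work.
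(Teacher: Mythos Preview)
Your argument is correct, but it takes a genuinely different route from the paper's.

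The paper's proof is much shorter and does not verify the dg~bialgebra axioms on~\(\OMex C(X)\) directly at all. Instead it first observes that the proposed \(\Delta\) and~\(\epsilon\) are compatible with~\(\phi\) on generators (grouplike elements go to \(0\)-simplices, which are grouplike in~\(C(\GG X)\); generators from~\(C(X)_{\ge 2}\) are handled by \Cref{thm:main}). Then it exploits that \(\phi\) is a strong deformation retract, hence \emph{injective} with image a direct summand. Since \(C(\GG X)\) is a dg~bialgebra and \(\phi\),~\(\phi^{\otimes 2}\),~\(\phi^{\otimes 3}\) are all injective, the identities \([d,\Delta]=0\), coassociativity, and counitality on~\(\OMex C(X)\) follow immediately from the corresponding identities in~\(C(\GG X)\). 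So your steps~(i) and~(ii) are replaced by a single injectivity argument.

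Your direct verification works, but one point deserves to be made more explicit. You invoke \Cref{thm:conditions-EEE} for~\(\CobarEl{c}\) with~\(c\in C(X)_{>1}\), yet that lemma lives in~\(\OM\,C(X)\), not in~\(\OM\,\Cex\); and on~\(\OM\,\Cex\) the grouplike rule on~\(\Cex_{1}\) does \emph{not} arise from an hgc structure, so \(\Delta\) need not be a chain map there. What actually makes your argument go through is that the degree-\(1\) consistency check you perform shows that the dga map~\(\OM\,C(X)\to\OMex C(X)\) is also a \emph{coalgebra} map. Once this is said, \([d,\Delta]=0\) and coassociativity on the image of~\(\OM\,C(X)\) follow from the corresponding facts in~\(\OM\,C(X)\), and on~\(\CobarEl{g}\) they are trivial. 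Your approach then has the mild advantage that it uses the Hess--Tonks result only for the quasi-isomorphism statement, not for the bialgebra axioms; the paper's approach buys brevity by using injectivity throughout.
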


\begin{proof}
  The maps~\eqref{eq:def-OMex-dgc} are compatible with~\(\phi\) because analogous formulas hold for the \(0\)-simplices~\(\phi(\CobarEl{g})\in\GG X\).
  Since \(\phi\) is a deformation retract, it is an injective quasi-iso\-mor\-phism and its image a direct summand of~\(C(\GG X)\).
  Because the latter is a dg~bialgebra, so is \(\OMex\,C(X)\), and \(\phi\) is a morphism of dg~bialgebras.
\end{proof}

\begin{remark}
  The extended cobar construction~\(\OMex\,C(X)\) is in fact the normalized chain complex
  of a certain cubical monoid~\(Y=\OMex X\), see~\cite[Sec.~3.5]{RiveraSaneblidze:2019}.
  This cubical monoid can be (formally) triangulated to a simplicial monoid~\(\Tri\,Y\).
  Sending each \(n\)-cube to the \(n!\)~simplices in its triangulation gives a well-defined
  quasi-isomorphism of dg~bialgebras~\(\TT\colon C(Y)\to C(\Tri\,Y)\).
  After the prepublication of this article,
  Minichiello--Rivera--Zeinalian~\cite[Cor.~5.20]{MinichielloRiveraZeinalian:2022} have shown that
  there is a morphism of simplicial monoids~\(f\colon\Tri\,Y\to GX\) such that
  \(\phi = f_{*}\circ\TT\). This gives a different proof that \(\phi\) is morphism of dg~bialgebras.
\end{remark}

\section{Twisted tensor products}
\label{sec:twisted-tensor}

Let \(C\) be an hgc and \(A\) a dg~bialgebra, and let \(M\) be an \(A\)-dgc.
By the latter we mean a dgc~\(M\) that is also a left \(A\)-module
such that the diagonal~\(\Delta_{M}\colon M\to M\otimes M\) and the augmentation~\(\epsilon_{M}\colon M\to\kk\)
are \(A\)-equivariant. (Recall that \(A\) acts on~\(M\otimes M\) via its diagonal~\(\Delta_{A}\colon A\to A\otimes A\)
and on~\(\kk\) via its augmentation~\(\epsilon_{A}\colon A\to\kk\).)

Let \(t\colon C\to A\) be a twisting cochain.
The differential of the twisted tensor product~\(C\otimes_{t} M\) is given by
\begin{equation}
  d_{t} = d_{C}\otimes 1 + 1\otimes d_{M} - \delta_{t}
\end{equation}
where
\begin{equation}
  \label{eq:def-delta-t}
  \delta_{t} = (1\otimes\mu_{M})\,(1\otimes t\otimes 1)\,(\Delta_{C}\otimes 1)
\end{equation}
and \(\mu_{M}\colon A\otimes M\to M\) is the structure map of the \(A\)-module~\(M\).
In the Sweedler notation this is expressed as
\begin{equation}
  d_{t}(c\otimes m) = d\,c\otimes m + (-1)^{\deg{c}}\,c\otimes d\,m - \sum_{(c)} (-1)^{\deg{\swee{c}{1}}}\,\swee{c}{1}\otimes t(\swee{c}{2})\,m
\end{equation}
for~\(c\otimes m\in C\otimes_{t}M\).

The purpose of this section is to observe that \(C\otimes_{t} M\) can again be turned into a dgc
if \(t\) is comultiplicative.
The dual situation of a multiplication on the twisted tensor product of an hga and a dg~bialgebra
has already been considered by Kadeishvili--Saneblidze~\cite[Thm.~7.1]{KadeishviliSaneblidze:2005}.

Let \(f\colon\OM\,C\to A\) be the map of dg~bialgebras induced by the comultiplicative twisting cochain~\(t\).
Based on~\(f\) and on the map~\(\EEE\) from~\eqref{eq:def-EEE}, we introduce the map of degree~\(0\)
\begin{equation}
  \FFF\colon C \stackrel{\EEE}{\longrightarrow} \OM\,C\otimes C \xrightarrow{f\otimes 1} A\otimes C.
\end{equation}
The diagonal of~\(C\otimes_{t} M\) then is defined as
\begin{multline}
  \label{eq:def-diag-twisted}
  \qquad
  \Delta =
  (1_{C}\otimes\mu_{M}\otimes 1_{C}\otimes 1_{M})(1_{C}\otimes 1_{A}\otimes T_{C,M}\otimes 1_{M})\, \\*
  \bigl(1_{C}\otimes \FFF\otimes 1_{M}\otimes 1_{M}\bigr)\,(\Delta_{C}\otimes\Delta_{M})
  \qquad
\end{multline}
where \(\mu_{M}\colon A\otimes M\to M\) is the action. In terms of the Sweedler notation this means
\begin{equation}
  \Delta(c\otimes m) = \sum_{(c), (m)} \sum_{i}\, (-1)^{\deg{c_{i}}\deg{\swee{m}{1}}}\,
  \bigl( \swee{c}{1} \otimes a_{i}\cdot\swee{m}{1} \bigr) \otimes \bigl( c_{i} \otimes \swee{m}{2} \bigr)
\end{equation}
for~\(c\otimes m\in C\otimes_{t}M\) and \(\FFF(\swee{c}{2})=\sum_{i} a_{i}\otimes c_{i}\in A\otimes C\).

\begin{proposition}
  Let \(t\colon C\to A\) be a comultiplicative twisting cochain and \(M\) an \(A\)-dgc.
  Then the twisted tensor product~\(C\otimes_{t} M\) is a dgc
  with the diagonal given above and the augmentation~\(\epsilon_{C}\otimes\epsilon_{M}\).
\end{proposition}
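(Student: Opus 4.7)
The plan is to verify three properties in turn: that $\epsilon_C\otimes\epsilon_M$ is a counit for~$\Delta$, that $\Delta$ is coassociative, and — the main obstacle — that both $\Delta$ and the augmentation commute with the twisted differential~$d_t$. The key preparatory step is to observe that comultiplicativity of~$t$ means that the induced map $f\colon\OM\,C\to A$ is a morphism of dg~bialgebras, so pushing forward the identities of~\Cref{thm:conditions-EEE} via $f\otimes 1_C$ (respectively $f\otimes f\otimes 1_C$), while using $f\,t_C=t$, $f\,\muomc=\mu_A\,(f\otimes f)$ and $(f\otimes f)\,\Delta_{\OM C}=\Delta_A\,f$, yields the companion identities
\begin{align*}
d(\FFF) &= (\mu_A\otimes 1_C)(t\otimes\FFF)\,\Delta_C - (\mu_A\otimes 1_C)(1_A\otimes T_{C,A})(\FFF\otimes t)\,\Delta_C, \\
(1_A\otimes\Delta_C)\,\FFF &= (\mu_A\otimes 1_C\otimes 1_C)(1_A\otimes T_{C,A}\otimes 1_C)(\FFF\otimes\FFF)\,\Delta_C, \\
(\Delta_A\otimes 1_C)\,\FFF &= (1_A\otimes\FFF)\,\FFF,
\end{align*}
together with the projection identities $(\epsilon_A\otimes 1_C)\,\FFF=1_C$ and $(1_A\otimes\epsilon_C)\,\FFF=\iota_A\,\epsilon_C$, both of which follow from $\im\EE^k\subset\bar C^{\otimes k}\otimes\bar C$ for~$k\ge 1$ combined with $\epsilon_A\,t=0$.

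Given these identities, the counit axiom for~$\Delta$ reduces at once to the counit axioms for~$(C,\Delta_C)$ and~$(M,\Delta_M)$, using the $A$-equivariance of~$\epsilon_M$ to convert $\epsilon_M(a_i\,\swee{m}{1})$ into $\epsilon_A(a_i)\,\epsilon_M(\swee{m}{1})$. That $\epsilon_C\otimes\epsilon_M$ is itself a chain map on~$C\otimes_t M$ needs, in addition, $\epsilon_A\,t=0$ to annihilate the twisting correction~$\delta_t$.

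Coassociativity is then a direct, if lengthy, expansion. One applies $(\Delta\otimes 1)\,\Delta$ and $(1\otimes\Delta)\,\Delta$ to $c\otimes m$ and uses coassociativity of~$\Delta_C$ and~$\Delta_M$, the $A$-equivariance of~$\Delta_M$ to commute copies of $\Delta_M$ past actions by~$A$, associativity of the action, and the second and third displayed identities above to rewrite the nested occurrences of~$\FFF$; the Koszul signs are governed by the two transpositions $T_{C,A}$ and $T_{C,M}$ appearing in~\eqref{eq:def-diag-twisted}.

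The main obstacle is the chain map property $d\,\Delta=\Delta\,d_t$. Expanding both sides by the graded Leibniz rule and using that $\Delta_C$ and $\Delta_M$ are chain maps, together with $d_M(a\,m)=d_A(a)\,m+(-1)^{\deg{a}}\,a\,d_M(m)$, reduces the problem to matching the terms involving either~$\delta_t$ or the internal differential $d(\FFF)$, the latter arising when $d_C$ passes onto the factor $\swee{c}{2}$ feeding~$\FFF$. Substituting for $d(\FFF)$ via the first displayed identity yields two families of terms. The family containing $t\otimes\FFF$ recombines, after one further application of coassociativity of~$\Delta_C$ and of the $A$-equivariance of~$\Delta_M$, with the $\delta_t\otimes 1$ contribution from $(d_t\otimes 1)\,\Delta$. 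The family containing $\FFF\otimes t$ recombines analogously with the $1\otimes\delta_t$ contribution from $(1\otimes d_t)\,\Delta$, the $A$-linearity of~$\mu_M$ being used to move the twisting factor into position. The cancellation is formally dual to the verification that Kadeishvili--Saneblidze's $M\otimes_t C$ is a dga \cite[Thm.~7.1]{KadeishviliSaneblidze:2005}, and the sign bookkeeping carries over verbatim.
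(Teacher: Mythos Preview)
Your derivation of the three companion identities for~\(\FFF\), the counit argument, and the outline for coassociativity are all fine and match the paper's approach. The gap is in the chain map verification.

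You account for three blocks of terms: the two families coming from~\(d(\FFF)\), and the contributions \((\delta_{t}\otimes 1)\,\Delta\) and \((1\otimes\delta_{t})\,\Delta\). But there is a fourth block, \(\Delta\,\delta_{t}\), which you never name or cancel. Your claimed pairing of the \(\FFF\otimes t\) family with \((1\otimes\delta_{t})\,\Delta\) does not hold: writing \(\FFF(\swee{c}{2})=\sum a_{i}\otimes c_{i}\), the term \((1\otimes\delta_{t})\,\Delta\) produces factors of the form \(t\bigl((c_{i})_{(2)}\bigr)\) acting on~\(\swee{m}{2}\), whereas the \(\FFF\otimes t\) family produces \(t(\swee{c}{22})\) acting on~\(\swee{m}{1}\). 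These live in different tensor slots and do not match.

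What is missing is the identity
\[
  \Delta_{A}\,t = (1_{A}\otimes t)\,\FFF + t\otimes\iota_{A},
\]
which follows from comultiplicativity of~\(t\) by projecting \(\Delta_{A}\,f\,t_{C}=(f\otimes f)\,\EE\) onto the components in \(A\otimes f(\OM_{l}\,C)\) for \(l=0,1\). One applies this to expand \(\Delta\,\delta_{t}\): using \(A\)-equivariance of~\(\Delta_{M}\) one has \(\Delta_{M}\bigl(t(\swee{c}{2})\,m\bigr)=\Delta_{A}\bigl(t(\swee{c}{2})\bigr)\cdot\Delta_{M}(m)\), and the displayed identity then splits \(\Delta\,\delta_{t}\) into two pieces. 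The piece coming from \((1_{A}\otimes t)\,\FFF\) cancels with \((1\otimes\delta_{t})\,\Delta\) after invoking your second companion identity for~\(\FFF\); the piece coming from \(t\otimes\iota_{A}\) cancels with the \(\FFF\otimes t\) family from~\(d(\FFF)\). Together with your (correct) first cancellation, this closes the argument. The paper organizes the computation exactly this way.
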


\begin{proof}
  This is a lengthy computation based on the analogues
  \begin{gather}
    \label{eq:prop-FFF-1}
    d(\FFF) = (\mu_{A} \otimes 1_{C})(t \otimes \FFF)\,\Delta_{C}
    - (\mu_{A} \otimes 1_{C})\,(1_{A}\otimes T_{C,A})\,(\FFF \otimes t)\,\Delta_{C}, \\
    \label{eq:prop-FFF-2}
    (1_{A} \otimes \Delta_{C})\,\FFF = (\mu_{A} \otimes 1_{C} \otimes 1_{C})\,(1_{A} \otimes T_{C,A} \otimes 1_{C})\,(\FFF \otimes \FFF)\,\Delta_{C}, \\
    \label{eq:prop-FFF-3}
    (\Delta_{A} \otimes 1_{C})\,\FFF = (1_{A} \otimes \FFF)\,\FFF.
  \end{gather}
  of the identities for~\(\EEE\) stated in \Cref{thm:conditions-EEE}.
  One additionally uses the formula
  \begin{equation}
    \label{eq:Delta-t}
    \Delta_{A}\,t = (1\otimes t)\,\FFF + t\otimes\iota_{A},
  \end{equation}
  which can be seen as follows:
  Since \(f\) is a morphism of coalgebras, one has
  \begin{equation}
    \Delta_{A}\,t=\Delta_{A}\,f\,t_{C}=(f\otimes f)\,\Delta_{\OM C}\,t_{C}=(f\otimes f)\,\EE.
  \end{equation}
  The image of~\(\EE\) lies in~\(\OM\,C\otimes\OM_{l}\,C\) with~\(l\le1\). Considering the terms for~\(l=0\) and~\(l=1\)
  separately as in the proof of \Cref{thm:conditions-EEE} gives \eqref{eq:Delta-t}.

  In order to prove that \(\Delta=\Delta_{C\otimes M}\) as given in~\eqref{eq:def-diag-twisted} is a chain map,
  it is convenient to use the tensor product differential~\(d_{\otimes}=d_{C}\otimes 1+1\otimes d_{M}\) on~\(C\otimes M\)
  and analogously on~\((C\otimes M)\otimes(C\otimes M)\)
  and to show that
  \begin{equation}
    d_{\otimes}(\Delta_{C\otimes M})
    - (\delta_{t}\otimes 1_{C\otimes M})\,\Delta_{C\otimes M} - (1_{C\otimes M}\otimes \delta_{t})\,\Delta_{C\otimes M}
    + \Delta_{C\otimes M}\,\delta_{t} = 0.
  \end{equation}
  With respect to these differentials,
  \(\FFF\) is the only map appearing in~\eqref{eq:def-diag-twisted} that is not a chain map.
  The boundary~\(d_{\otimes}(\Delta)\) therefore has two summands coming from the right-hand side of~\eqref{eq:prop-FFF-1}.
  The first of them cancels with~\((\delta_{t}\otimes 1)\,\Delta\).
  Using \eqref{eq:Delta-t}, the term~\(\Delta\,\delta_{t}\) splits up into two. Taking \eqref{eq:prop-FFF-2} into account,
  the first one cancels with~\((1\otimes\delta_{t})\,\Delta\) and the second one with the second summand in~\(d_{\otimes}(\Delta)\).

  The coassociativity of~\(\Delta_{C\otimes M}\) is a consequence of~\eqref{eq:prop-FFF-2} and~\eqref{eq:prop-FFF-3}.
  The properties involving the augmentation follow directly from the definitions.
\end{proof}

\begin{corollary}
  Let \(t\colon C(X)\to C(G)\) be Szczarba's twisting cochain determined by a twisting function~\(\tau\colon X_{>0}\to G\),
  and let \(F\) be a left \(G\)-space. Then \(C(X)\otimes_{t}C(F)\) is a dgc.
\end{corollary}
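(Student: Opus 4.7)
The proof is essentially an application of the preceding proposition to the specific data at hand, so the plan is to identify the three hypotheses and verify them in turn.

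First, I would recall that by formulas~\eqref{eq:def-E-CX}--\eqref{eq:def-EE-CX} the coaugmented dgc~\(C(X)\) carries a natural hgc structure via the Berger--Fresse interval cut operations, and that \(C(G)\) is a dg~bialgebra with the shuffle product as multiplication and the Alexander--Whitney map as diagonal. Szczarba's twisting cochain~\(t\colon C(X)\to C(G)\) is comultiplicative by \Cref{thm:main}, which is exactly the hypothesis on~\(t\) needed for the preceding proposition.

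Second, I would check that \(C(F)\) is a \(C(G)\)-dgc in the sense defined at the start of \Cref{sec:twisted-tensor}. The left \(C(G)\)-module structure is the standard one, obtained by composing the shuffle map~\(C(G)\otimes C(F)\to C(G\times F)\) with the chain map~\(C(\mu_{F})\colon C(G\times F)\to C(F)\) induced by the action. The equivariance of the diagonal~\(\Delta_{M}\colon C(F)\to C(F)\otimes C(F)\) and of the augmentation~\(\epsilon_{M}\) then follow because both \(\mu_{F}\colon G\times F\to F\) and the shuffle map are dgc morphisms: the simplicial map~\(\mu_{F}\) induces a dgc map on normalized chains by naturality of the Alexander--Whitney diagonal, and the shuffle map is itself a dgc morphism (this is classical, and is an instance of the compatibility between the shuffle and Alexander--Whitney maps appearing throughout the paper). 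Composing two dgc morphisms yields the structure map~\(\mu_{M}\colon C(G)\otimes C(F)\to C(F)\), which is precisely the \(C(G)\)-equivariance of~\(\Delta_{M}\) and~\(\epsilon_{M}\).

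With these two verifications in place, the preceding proposition applies directly and furnishes the required dgc structure on~\(C(X)\otimes_{t}C(F)\), with diagonal given by the formula~\eqref{eq:def-diag-twisted}. No further work is needed. I do not expect any genuine obstacle here; the only mildly subtle point is the verification that \(C(F)\) is a \(C(G)\)-dgc, and even this reduces to the well-known fact that shuffle and Alexander--Whitney are compatible on chains of simplicial sets.
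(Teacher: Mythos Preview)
Your proposal is correct and matches the paper's own (implicit) argument: the corollary is stated without proof immediately after the proposition, as a direct consequence of that proposition together with \Cref{thm:main}. Your identification of the three ingredients---the hgc structure on~\(C(X)\), the comultiplicativity of~\(t\), and the fact that \(C(F)\) is a \(C(G)\)-dgc via the shuffle map---is exactly what the paper has in mind.
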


The diagonal is independent of the chosen coaugmentation of~\(C(X)\) and looks explicitly as follows:
For~\(x\in X_{n}\) and~\(y\in F_{m}\) we have
\begin{multline}
  \label{eq:diag-twisted-tensor-explicit}
  \Delta\,(x\otimes y) =
  \sum_{i=0}^{n}\sum_{j=0}^{m}\sum_{k=0}^{n-i}\sum_{\pp}\,
  (-1)^{\epsilon(\pp)+i+(m-j-1)\deg{z^{\pp}_{k+1}}} \\*
  \cdot \Bigl(\tpartial^{i}\,x \otimes t(z^{\pp}_{1})\cdots t(z^{\pp}_{k})\, 
  \tpartial^{\,j}\,y \Bigr)
  \otimes \Bigl( z^{\pp}_{k+1}\otimes (\partial_{0})^{m-j}\,y \Bigr)
\end{multline}
where \(z=(\partial_{0})^{n-i}\,x\), and the last sum is over all interval cuts~\(\pp\) of~\(\setzero{i}\) corresponding to~\(e_{k}\).
(Recall that the unit~\(1\in\OM\,C\) is annihilated by the map~\(p_{C}\) implicit in~\(\FFF\) and defined in~\eqref{eq:dec-pC},
hence so is the term~\(\CobarEl{z}\otimes1\) appearing in~\(\Delta\,\CobarEl{z}\) by~\(1\otimes p_{C}\).)

\section{Proof of Theorem~\ref{thm:psi-dgc}}
\label{sec:morph-dgc}

This proof is similar to the one for \Cref{thm:main} given in~\Cref{sec:proof-main}.
Since Szczarba proved that \(\psi_{F}\) is a chain map \cite[Thm.~2.4]{Szczarba:1961},
we only need to show that \(\psi_{F}\) is a morphism of coalgebras.
We start by observing that it is enough to consider the case~\(F=G\) because
we can write the twisted shuffle map~\(\psi_{F}\) in the form
\begin{multline}
  C(X)\otimes_{t}C(F) = C(X)\otimes_{t}C(G) \otimesunder{C(G)} C(F) \\
  \xrightarrow{\psi_{G}\otimes1} C(X\times_{\tau}G) \otimesunder{C(G)} C(F)
  \xrightarrow{\;\shuffle\;} C\bigl(X\times_{\tau}G \timesunder{G} F\bigr) = C(X\times_{\tau}F).
\end{multline}
Hence if \(\psi_{G}\) is a dgc map, then so is \(\psi_{F}\).
(Recall from~\cite[(17.6)]{EilenbergMoore:1966} that the shuffle map~\(\shuffle\) is a morphism of dgcs.
This also implies that the tensor product of a left and a right \(A\)-dgc
over a dg~bialgebra~\(A\) is again a dgc, compare~\cite[p.~848]{FelixHalperinThomas:1995}.)

The diagonal on the right \(C(G)\)-module~\(C(X\times_{\tau}G)\) is \(C(G)\)-equivariant, and inspection
of the formula~\eqref{eq:diag-twisted-tensor-explicit} shows that so is the diagonal on~\(C(X)\otimes_{t}C(G)\).
Because \(\psi=\psi_{G}\) is also \(C(G)\)-equivariant, we may assume \(y=1\in C(G)\). In other words,
it suffices to consider elements of the form~\(x\otimes 1\in C(X)\otimes_{t}C(G)\) when checking the claimed identity
\begin{equation}
  \Delta\,\psi = (\psi\otimes\psi)\,\Delta.
\end{equation}

We therefore need to look at \(\Delta\,\psi(x)=(-1)^{\deg{\ii}}\,\Delta\,\hatSz_{\ii}x\).
Combining \Cref{thm:Sz-Psi} with \Cref{thm:Psi-pp-degree}, we have
\begin{equation}
  \label{eq:hatSz-partial0}
  (\partial_{0})^{l}\hatSz_{\ii} x = \hatSz_{\ii_{2}} x(p_{1}-1,\dots,p_{k+1}-1)
\end{equation}
and
\begin{multline}
  \label{eq:hatSz-tpartial}
  \sum_{\ii_{1}\in S_{n}(\pp)}(-1)^{\deg{\ii}}\,\tpartial^{k}\,\hatSz_{\ii} x = \sum \,(-1)^{\epsilon}\,\hatSz_{\jj_{1}} x(0,\dots,p_{1}-1) \\*
  \cdot \Sz_{\jj_{2}} x(p_{1}-1,\dots,p_{2}-1) \cdots \Sz_{\jj_{k+1}} x(p_{k}-1,\dots,p_{k+1}-1),
\end{multline}
where the sum on the right-hand side is over all~\(\jj_{1}\in S_{q_{1}-1}\),~\dots,~\(\jj_{k+1}\in S_{q_{k+1}-1}\),
and
\begin{equation}
  \label{eq:hatSz-epsilon}
  \epsilon = \deg{\jj_{1}} + \dots + \deg{\jj_{k}} + \sum_{s=1}^{k}(s-1)(q_{s}-1).
\end{equation}

Also, formula~\eqref{eq:diag-twisted-tensor-explicit} for the diagonal on~\(C(X)\otimes_{t}C(G)\) boils for~\(x\otimes1\) down to
\begin{multline}
  \label{eq:diag-x-1}
  \quad
  \Delta\,(x\otimes 1) =
  \sum_{i=0}^{n}\sum_{k=0}^{n-i}\sum_{\pp}\,
  (-1)^{\epsilon(\pp)+i-\deg{z^{\pp}_{k+1}}} \\*
  \cdot \Bigl(\tpartial^{i}\,x \otimes t(z^{\pp}_{1})\cdots t(z^{\pp}_{k}) \Bigr)
  \otimes \Bigl( z^{\pp}_{k+1}\otimes 1 \Bigr)
  \quad
\end{multline}
where \(x\in X_{n}\), \(z=(\partial_{0})^{n-i}\,x\in X_{i}\),
and the last sum is over all interval cuts~\(\pp\) of~\(\setzero{i}\) corresponding to~\(e_{k}\).
To this expression we have to apply the map~\(\psi\otimes\psi\).
Note that the first tensor factor above is of the form
\begin{multline}
  \label{eq:diag-x-1-first}
  \tpartial^{i}\,x \otimes t(z^{\pp}_{1})\cdots t(z^{\pp}_{k})
  = \tpartial^{i}\,x \otimes \Sz z^{\pp}_{1}\cdots \Sz z^{\pp}_{k} \\
    {} + \text{additional terms with fewer than~\(k\) factors in the second component.}
\end{multline}
As in \Cref{sec:proof-main}, these additional terms arise whenever a~\(z^{\pp}_{s}\) with~\(1\le s\le k\)
is of degree~\(1\) because of the extra term~\(-1\in C(G)\) in the definition of~\(t\) in this case.

We first consider the cuts~\(\pp\) in~\eqref{eq:diag-x-1}
with~\(\ell(\pp)=0\), that is, where the intervals with labels~\(1\) to~\(k\) cover all of~\([i]\).
In this case we conclude the following from~\eqref{eq:hatSz-partial0} and~\eqref{eq:hatSz-tpartial}:
If we apply \(\psi\otimes\psi\) to the terms in~\eqref{eq:diag-x-1}
that correspond to the first line of~\eqref{eq:diag-x-1-first}, then we exactly get the terms appearing in
\begin{equation}
  \sum_{\ii_{1}\in S_{n}(\pp)} (-1)^{\deg{\ii}}\, \tpartial^{k}\,\hatSz_{\ii} x \otimes (\partial_{0})^{l}\,\hatSz_{\ii} x
\end{equation}
if we set \(i=p_{1}-1\) and~\(z=x(n-i,\dots,n)\). Moreover, the formula~\eqref{eq:hatSz-epsilon} tells us
that the sign above corresponds with the one in~\eqref{eq:diag-x-1}.

We now proceed to showing that the decompositions~\(\pp\) with~\(\ell(\pp)>0\) lead to summands in~\eqref{eq:diag-x-1}
that cancel out with the additional terms in~\eqref{eq:diag-x-1-first} for the~\(\pp\) with~\(\ell(\pp)=0\).
The variable~\(i\in[n]\) in~\eqref{eq:diag-x-1} is fixed during the following discussion.

We look at a minimal decomposition~\(\pp\) of~\([i]\) according to the partial ordering introduced in~\Cref{sec:proof-main}
and at the \(2^{\ell_{1}(\pp)}\)~decompositions~\(\pp'\ge\pp\). They all lead to the same~\(z^{\pp'}_{k'+1}=z^{\pp}_{k+1}\),
hence to the same second tensor factor~\(\hatSz z^{\pp}_{k+1}\) in
\begin{equation}
  (\psi\otimes\psi)\,\Delta\,(x\otimes1).
\end{equation}
For each such~\(\pp'\), the first tensor factor in~\eqref{eq:diag-x-1},
\begin{equation}
  (-1)^{\epsilon(\pp')+i+\deg{z^{\pp'}_{k'+1}}}\,\tpartial^{i}\,x \otimes t(z^{\pp'}_{1})\cdots t(z^{\pp'}_{k'}),
\end{equation}
contains the term
\begin{equation}
  (-1)^{+\ell_{1}(\pp')}\Bigl( (-1)^{\epsilon(\pp)+i+\deg{z^{\pp}_{k+1}}}\,\tpartial^{i}\,x \otimes \Sz z^{\pp}_{1}\cdots \Sz z^{\pp}_{k} \Bigr), 
\end{equation}
because of the contributions~\(-1\in C(G)\) of each interval of length~\(1\),
and also because we have \(\epsilon(\pp')=\epsilon(\pp)\) by \Cref{thm:pp-refine}.
As before, these terms add up to~\(0\) for~\(\ell_{1}(\pp)>0\), which completes the proof.

\section{Comparison with Shih's twisted tensor product}
\label{sec:shih}

We have mentioned in the introduction already that Szczarba's twisting cochain agrees
with the one constructed by Shih~\cite[\S II.1]{Shih:1962} using homological perturbation theory.
In~\cite[Sec.~7]{Franz:szczarba1} we pointed out that despite this agreement
their approaches lead to different twisted tensor products and different twisted shuffle maps.

Recall that given any cochain~\(t\colon C\to A\), one can define the twisted tensor products
\begin{equation}
  \label{eq:both-twisted-tensor-products}
  C\otimes_{t}M
  \qquad\text{and}\qquad
  M\otimes_{t}C
\end{equation}
for a left or, respectively, right \(A\)-module, see~\cite[Def.~II.1.4]{HusemollerMooreStasheff:1974} for instance.
The twisted tensor products considered so far have been of the first kind.

In \Cref{sec:morph-dgc} we have proven that Szczarba's twisted shuffle map
\begin{equation}
  \psi\colon C(X)\otimes_{t}C(F) \to C(X\times_{\tau}F)
\end{equation}
is a morphism of dgcs, and and it is not difficult to see
that for~\(F=G\) it is also a morphism of right \(C(G)\)-modules \cite[Prop.~7.1]{Franz:szczarba1}.

Shih on the other hand uses the twisted tensor product~\(C(F)\otimes_{t}C(X)\) (where the fibre~\(F\)
is considered as a right \(G\)-space). His twisted shuffle map
\begin{equation}
  \shuffle^{\tau}\colon C(F)\otimes_{t}C(X) \to C(F\times_{\tau}X)
\end{equation}
is part of a contraction that is a homotopy equivalence of right \(C(X)\)-comodules and, in the case~\(F=G\), of left \(C(G)\)-modules,
see~\cite[Props.~II.4.2~\&~II.4.3]{Shih:1962} and~\cite[Lemma~4.5\(^{*}\)]{Gugenheim:1972}. In this sense his result is stronger because
it is not known whether Szczarba's map~\(\psi\) is part of such a homotopy equivalence.\footnote{%
  Since the underlying complexes are free and defined for~\(\kk=\Z\),
  the map~\(\psi\) is at least a homotopy equivalence of complexes, \cf~\cite[Prop.~II.4.3]{Dold:1980}.}

On the other hand, there does not seem to be a dgc structure on~\(C(F)\otimes_{t}C(X)\).
The ``mirror image'' of~\eqref{eq:def-diag-twisted} gives a chain map
\begin{equation}
  \label{eq:def-diag-mirror}
  C(F)\otimes_{t}C(X) \to \Bigl( C(F)\otimes_{t}C(X) \Bigr) \otimes \Bigl( C(F)\otimes_{t}C(X) \Bigr),
\end{equation}
but it is not coassociative in general because of the asymmetry
inherent in the definition of the cooperations~\(E^{k}\).
We expect, however, that \eqref{eq:def-diag-mirror} extends to an \(A_{\infty}\)-coalgebra structure.

There is a different definition of an hgc, based on cooperations
\begin{equation}
  \tilde E^{k}\colon C \to C \otimes C^{\otimes k},
\end{equation}
which for simplicial sets is realized by the interval cut operations~\(\tilde E^{k}=\AWu{\tilde e_{k}}\)
based on the surjections~\(\tilde e_{k}=(1,2,1,\dots,1,k,1)\), \cf~\cite[Sec.~4]{Franz:gersten}.
In this setting \(C(F)\otimes_{t}C(X)\) would become a dgc with the diagonal~\eqref{eq:def-diag-mirror}
if Szczarba's twisting cochain~\(t\) were comultiplicative with respect to this new hgc structure.
This is not the case, however, as can be seen for~\(\CobarEl{x}\in\OM\,C(X)\) with~\(x\in X_{2}\) already.

\section{Discrete fibres}
\label{sec:cover}

In this section we dualize the dgc model from \Cref{thm:psi-dgc}
to a dga model for bundles with finite fibres.
We also derive a certain spectral sequence converging to the homology of a bundle with discrete fibre
that in the context of CW~complexes was constructed by Papadima--Suciu~\cite{PapadimaSuciu:2010}.
For finite fibres we again consider the dual spectral sequence converging to the cohomology of the bundle,
which turns out to be a spectral sequence of algebras.
In the special case of a \(p\)-group it has recently been studied by Rüping--Stephan~\cite{RuepingStephan:2019}.

\subsection{The homological spectral sequence}

Let \(G\) be (the simplicial group associated to) a discrete group, so that \(C(G)=C_{0}(G)=\kk[G]\) is the group ring
with coefficients in~\(\kk\). We write \(\aa\lhd\kk[G]\) for the augmentation ideal.
For a discrete space~\(F\) it gives rise to an increasing filtration of~\(C(F)=C_{0}(F)\) by the \(\kk[G]\)-submodules
\begin{equation}
  \FF_{-p}(F) = \aa^{p}\,C(F)
\end{equation}
with~\(p\in\N\) (and the convention~\(\aa^{0}=\kk[G])\). We write \(\gr_{*}(F)\) for the associated graded module
over the graded algebra~\(\gr_{*}(G)\) with structure map~\(\gr_{*}\mu\) induced by the action~\(\mu\colon G\times F\to F\).

Given a bundle~\(X\times_{\tau}F\), we consider the increasing filtration
\begin{equation}
  \label{eq:def-FFp}
  \FF_{-p}(X,F) = C(X)\otimes_{t}\FF_{-p}(F)
\end{equation}
of the twisted tensor product~\(C(X)\otimes_{t}C(F)\) by subcomplexes.
The zeroeth page of the associated spectral sequence is of the form
\begin{equation}
  \Ess^{0}_{p,q} = C_{p}(X) \otimes \gr_{q}(F)
\end{equation}
and lives in the lower half-plane as \(q\le0\).

Since \(G\) is discrete, any twisting cochain mapping to~\(C(G)\) vanishes in all degrees different from~\(1\).
It furthermore takes values in the augmentation ideal~\(\aa\)
by the second defining identity in~\eqref{eq:def-tw}.
Hence the twisting term in the differential of~\(C(X)\otimes_{t}C(F)\) lowers the filtration degree.
As a result, the induced differential on~\(\Ess^{0}\) is \(d^{0}=d\otimes 1\),
and the first page of the spectral sequence is of the form
\begin{equation}
  \Ess^{1}_{p,q} = H_{p+q}(X;\gr_{q}(F)).
\end{equation}
The convergence of this spectral sequence is delicate in general, see~\cite[Sec.~5.3]{PapadimaSuciu:2010}.
However, if the augmentation ideal~\(\aa\) is nilpotent, meaning that \(\aa^{L}=0\) for some~\(L\),
then the filtration is finite and convergence is not an issue.

Let us assume that that \(\kk\) is a field or, more generally,
that \(H(X)\) is torsion-free over the principal ideal domain~\(\kk\).
We then have
\begin{equation}
  \Ess^{1}_{p,q} = H_{p+q}(X) \otimes \gr_{q}(F).
\end{equation}
Moreover, \(H(X)\) is a graded coalgebra in this case via the composition
\begin{equation}
  \label{eq:HX-coalg}
  H(X) \longrightarrow H(X\times X) \stackrel{\cong}{\longrightarrow} H(X)\otimes H(X)
\end{equation}
where the second map is the inverse of the Künneth isomorphism.

We need the following observation.

\begin{lemma}
  Let \(C\) be a dgc and \(G\) a discrete group, and let \(t\colon C\to \kk[G]\) be a twisting cochain.
  Then \(t\) induces a well-defined twisting cochain
  \begin{equation*}
    t_{*}\colon H(C)\to \gr_{*}(G),
    \qquad
    [c] \mapsto
    \begin{cases}
      [t(c)] \in \gr_{-1}(G) & \text{if \(\deg{c}=1\),} \\
      0 & \text{otherwise.}
    \end{cases}
  \end{equation*}
\end{lemma}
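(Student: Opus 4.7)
The plan is to verify the three defining properties of a twisting cochain from~\eqref{eq:def-tw} for $t_*$, after first checking that the formula makes sense on homology.

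First I would record two elementary facts about $t$: since $\kk[G]$ is concentrated in degree zero and $t$ has degree $-1$, the map $t$ vanishes outside $C_1$, so $t_*$ only needs to be specified on $H_1(C)$; and since $\epsilon_A t = 0$, the image of $t$ lies in the augmentation ideal $\aa = \ker\epsilon_A$, so $[t(c)] \in \aa/\aa^2 = \gr_{-1}(G)$ is meaningful. For well-definedness, suppose $c_1, c_2 \in C_1$ are cycles with $c_1 - c_2 = d b$ for some $b \in C_2$. Since $d_A = 0$ and $t(b) = 0$ (as $C(G)_1 = 0$), the twisting cochain equation $d(t) = t \cup t$ applied at $b$ yields
\[
t(c_1) - t(c_2) = t(d b) = (t \cup t)(b) \in \aa \cdot \aa = \aa^2,
\]
so $[t(c_1)] = [t(c_2)]$ in $\gr_{-1}(G)$.

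Next, for the twisting cochain axioms on $t_*$ itself, the differentials on $H(C)$ and on $\gr_*(G)$ are both zero (the latter since $C(G) = \kk[G]$ has zero differential), so the Maurer--Cartan equation reduces to $t_* \cup t_* = 0$. This need only be checked on $H_2(C)$, since $t_*$ is concentrated in degree $1$; for a cycle $c \in C_2$ one computes
\[
(t_* \cup t_*)[c] = \bigl[(t \cup t)(c)\bigr] = \bigl[t(dc)\bigr] = 0 \in \gr_{-2}(G),
\]
using the same equation $d(t) = t \cup t$, now in degree $2$. The remaining conditions $t_* \iota = 0$ and $\epsilon t_* = 0$ are immediate: $t_*$ is supported in degree $1$ while the coaugmentation of $H(C)$ maps into degree $0$, and $\gr_{-1}(G)$ lies in the kernel of the augmentation of $\gr_*(G)$.

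The lemma presents no substantive obstacle; its content is the observation that the single identity $d(t) = t \cup t$, interpreted in degrees $1$ and $2$, supplies both the well-definedness of $t_*$ modulo $\aa^2$ and the vanishing of $t_* \cup t_*$ modulo $\aa^3$.
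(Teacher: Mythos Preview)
Your proof is correct and matches the paper's approach: both derive well-definedness and the Maurer--Cartan condition from the single identity \(t(db) = (t\cup t)(b)\) for \(b\in C_{2}\), using that \(t\) takes values in~\(\aa\). The paper is simply terser, compressing your explicit verification that \(t_{*}\cup t_{*}=0\) (together with the two normalization conditions) into the phrase ``for degree reasons this also shows that \(t_{*}\) is a twisting cochain''.
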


\begin{proof}
  For well-definedness we have to show \(t(d\,c)\in\FF_{-2}(G)\) for~\(c\in C_{1}\).
  Since there is no differential on~\(\kk[G]\),
  we get from the twisting cochain condition~\eqref{eq:def-tw} that
  \begin{equation}
    t(d\,c) = d\,t(c) + t(d\,c) = (t\cup t)(c) \in \FF_{-2}(G),
  \end{equation}
  again because \(t\) takes values in the augmentation ideal~\(\aa=\FF_{-1}(G)\).

  For degree reasons this also shows that \(t_{*}\) is a twisting cochain.
\end{proof}

The differential on the first page of the spectral sequence is given by the twisting term~\eqref{eq:def-delta-t}.
Using the lemma above and the fact that \(H(X)\) is a coalgebra, we can see that this differential
is the composition
\begin{multline}
  \qquad
  \delta_{t_{*}}\colon H(X)\otimes \gr_{*}(F) \xrightarrow{\Delta\otimes 1} H(X)\otimes H(X)\otimes \gr_{*}(F) \\*
  \xrightarrow{1\otimes t_{*}\otimes 1} H(X)\otimes \gr_{*}(G) \otimes \gr_{*}(F)
  \xrightarrow{1\otimes\gr_{*}\mu} H(X)\otimes \gr_{*}(F).
  \qquad
\end{multline}
In other words, we have an isomorphism of complexes
\begin{equation}
  \Ess^{1} = H(X) \otimes_{t_{*}} \gr_{*}(F).
\end{equation}
We thus recover the description of the spectral sequence of an equivariant chain complex
as given by Papadima--Suciu~\cite[Thm.~A]{PapadimaSuciu:2010}, up to the order of the tensor factors.

We now look at coalgebra structures.
The filtration~\(\FF(G)\) is comultiplicative in the sense that we have
\begin{equation}
  \Delta\,\FF_{-p}(F) \subset \sum_{q+r=p}\FF_{-q}(F)\otimes\FF_{-r}(F)
\end{equation}
for all~\(p\). In fact, the claim holds for the bialgebra~\(F=G\) by induction,
starting with the case~\(p=1\), which says that
\begin{align}
  \Delta(g-1) &= g\otimes g - 1\otimes 1 = g\otimes(g-1) + (g-1)\otimes1 \\
  \notag &\in \FF_{0}(G)\otimes\FF_{-1}(G) + \FF_{-1}(G)\otimes\FF_{0}(G)
\end{align}
for any~\(g\in G\). It carries over to~\(F\) as \(C(F)\) is a \(\kk[G]\)-dgc.

Moreover, inspection of formula~\eqref{eq:def-diag-twisted} or~\eqref{eq:diag-twisted-tensor-explicit}
for the diagonal of~\(C(X)\otimes_{t}C(F)\) shows that the filtration~\(\FF(X,G)\) is comultiplicative, too.
Taking again into account that the twisting cochain~\(t\) takes values in the augmentation ideal~\(\aa\),
we see that the diagonal on the page~\(\Ess^{0}\)
of the spectral sequence is componentwise,
\begin{equation}
  \Delta(c\otimes m) = \sum_{(c),(m)} \bigl(c_{(1)}\otimes m_{(1)}\bigr) \otimes \bigl(c_{(2)}\otimes m_{(2)}\bigr)
\end{equation}
for~\(c\in C_{p}(X)\) and~\(m\in\FF_{-q}(F)/\FF_{-q-1}(F)\).
This implies the following.

\begin{proposition}
  Assume that \(\kk\) is a field.
  The filtration~\eqref{eq:def-FFp} gives rise to a spectral sequence of coalgebras.
  As a dgc, its first page is given by
  \begin{equation*}
    \Ess^{1}_{p,q} = H_{p+q}(X) \otimes_{t_{*}} \gr_{q}(F)
  \end{equation*}
  with the componentwise coproduct. If the augmentation ideal~\(\aa\) is nilpotent,
  then the spectral sequence converges to~\(H(X\times_{\tau}F)\) as a graded coalgebra.
\end{proposition}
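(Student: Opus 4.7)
The plan is to build everything up from the associated graded level, leveraging the comultiplicativity of the filtration already verified in the discussion preceding the proposition. First I would check that each $\FF_{-p}(X,F)$ is indeed a subcomplex of~$C(X)\otimes_{t}C(F)$. The tensor-product part of the differential manifestly preserves the filtration, and the twisting term~$\delta_{t}$ from~\eqref{eq:def-delta-t}, built from a cochain~$t$ whose image lies in the augmentation ideal~$\aa=\FF_{-1}(G)$, in fact sends $\FF_{-p}(X,F)$ into $\FF_{-p-1}(X,F)$. In particular, $\delta_{t}$ vanishes on the associated graded, so the induced differential on $\Ess^{0}_{p,q}=C_{p}(X)\otimes\gr_{q}(F)$ is just $d_{C(X)}\otimes 1$, and taking homology (using that $\kk$ is a field, so that the Künneth isomorphism applies) yields $\Ess^{1}_{p,q}=H_{p+q}(X)\otimes\gr_{q}(F)$.

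Next I would handle the coalgebra structure. Since the filtration is comultiplicative, the diagonal on~$C(X)\otimes_{t}C(F)$ induces a well-defined coproduct on each page. Inspection of the explicit formula~\eqref{eq:diag-twisted-tensor-explicit} shows that every summand involving a nontrivial application of~$t$ drops at least one filtration degree, so at the level of~$\Ess^{0}$ only the ``componentwise'' contribution $\Delta_{C(X)}\otimes\Delta_{C(F)}$ survives. Passing to $\Ess^{1}$, and identifying the coalgebra structure on~$H(X)$ via~\eqref{eq:HX-coalg} and the Künneth map (which is a morphism of coalgebras), the coproduct becomes $\Delta_{H(X)}\otimes\Delta_{\gr(F)}$. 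For the differential~$d^{1}$, one traces through the single twisting term~$\delta_{t}$ that appears at filtration drop~$1$: by the lemma preceding the proposition it factors through the well-defined twisting cochain $t_{*}\colon H(X)\to\gr_{*}(G)$. This identifies $\Ess^{1}$ as $H(X)\otimes_{t_{*}}\gr_{*}(F)$ with componentwise coproduct, \emph{as a dgc}.

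Finally, for convergence, nilpotence of~$\aa$ gives $\FF_{-L}(F)=0$ for some~$L$, so the filtration of $C(X)\otimes_{t}C(F)$ has finite length in each total degree. Standard arguments then yield strong convergence to $H(C(X)\otimes_{t}C(F))$, which by \Cref{thm:psi-dgc} agrees with $H(X\times_{\tau}F)$ as a graded coalgebra. Comultiplicativity of the filtration ensures that the induced filtration on the limit is comultiplicative as well, and that the coproduct on~$\Ess^{\infty}$ matches the associated graded of the coproduct on $H(X\times_{\tau}F)$; convergence is therefore as coalgebras.

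The main obstacle I anticipate is the bookkeeping required to show that $d^{1}$ is literally~$\delta_{t_{*}}$ (not merely homotopic to it) and that the induced coproduct on $\Ess^{1}$ is genuinely componentwise after the Künneth identification; both hinge on carefully isolating the unique lowest-filtration contribution in~\eqref{eq:diag-twisted-tensor-explicit}. Everything else is essentially formal once the filtration is known to be multiplicative and comultiplicative with respect to all the relevant structure maps.
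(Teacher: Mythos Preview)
Your proposal is correct and follows essentially the same route as the paper: the proposition is stated as a summary of the preceding discussion, which establishes exactly the points you list (the twisting term lowers filtration degree so that \(d^{0}=d\otimes1\), the Künneth identification over a field, the lemma giving~\(t_{*}\) and hence \(d^{1}=\delta_{t_{*}}\), comultiplicativity of the filtration via~\eqref{eq:diag-twisted-tensor-explicit} yielding the componentwise coproduct on~\(\Ess^{0}\), and convergence from nilpotence of~\(\aa\) combined with \Cref{thm:psi-dgc}). The only minor remark is that the passage from~\(\Ess^{0}\) to~\(\Ess^{1}\) does not really need Künneth since \(\gr_{*}(F)\) carries no differential; the field hypothesis enters to ensure \(H(X)\) is a coalgebra and that the tensor product description of~\(\Ess^{1}\) holds.
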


\subsection{Dga models and the cohomological spectral sequence}

We now turn to cohomology.
For the following purely algebraic reason
we restrict to finite structure groups~\(G\) and finite fibres~\(F\).

The dual~\(C^{*}\) of a dgc~\(C\) with coproduct~\(\Delta\) is a dga with the transpose~\(\Delta^{*}\)
as multiplication, or more precisely, with the composition
\begin{equation}
  C^{*}\otimes C^{*} \to (C\otimes C)^{*} \xrightarrow{\Delta^{*}} C^{*}.
\end{equation}
However, the dual of a dga~\(A\) is not a dgc in general, but it is so if C is finitely generated
free \(\kk\)-module in each degree. The coproduct is the transpose~\(\mu^{*}\) of the multiplication
or rather its composition with the isomorphism~\((A\otimes A)^{*}\cong A^{*}\otimes A^{*}\).

So let us assume that \(G\) is finite.\footnote{%
  This restriction is missing for the multiplicative model stated in~\cite[p.~219]{KadeishviliSaneblidze:2005}.
  Together with the assumption of simple connectedness made there for the base space~\(X\) (see \Cref{fn:kadeishvili-saneblidze-pi1}),
  that model boils down to the tensor product~\(C^{*}(X)\otimes C^{*}(F)\) for Cartesian products
  satisfying an appropriate finiteness condition.}
Then \(C^{*}(G)\) is a dgc, and of course \(C^{*}(X)\) is a dga for any~\(X\).
Because of the definition
\begin{equation}
  \label{eq:diff-dual}
  d_{C^{*}}=-d_{C}^{*}
\end{equation}
of the differential on a dual complex
as the \emph{negative} of the transpose of the original one
(compare~\cite[Sec.~2.3]{Franz:gersten}), the transpose
\begin{equation}
  t^{*}\colon C^{*}(G) \to C^{*}(X)
\end{equation}
of Szczarba's twisting cochain satisfies
\begin{align}
  d(t^{*}) &= d_{C^{*}(X)}\,t^{*} + t^{*}\,d_{C^{*}(G)}
  = - \bigl( d_{C(X)}^{*}\,t^{*} + t^{*}\,d_{C(G)}^{*} \bigr) \\*
  \notag &= - \bigl( t\,d_{C(X)} + d_{C(G)}\,t \bigr)^{*}
  = - d(t)^{*} = - (t\cup t)^{*} = - t^{*} \cup t^{*}.
\end{align}
In other words, \(u=-t^{*}\) is again a twisting cochain in our sense.

The quasi-isomorphism~\(C(X)\otimes_{t}C(F)\to C(X\times_{\tau}F)\) from \Cref{thm:psi-dgc}
dualizes to a quasi-isomorphism of dgas between~\(C^{*}(X\times_{\tau}F)\) and the dual of~\(C(X)\otimes_{t}C(F)\).
If the fibre~\(F\) is finite, then we have an isomorphism of complexes
\begin{equation}
  \label{eq:twisted-dga}
  \bigl( C(X)\otimes_{t} C(F) \bigr)^{*} = C^{*}(X) \otimes_{u} C^{*}(F),
\end{equation}
which is now a twisted tensor product of the second form in~\eqref{eq:both-twisted-tensor-products}.
The minus sign in~\(u=-t^{*}\) arises again from~\eqref{eq:diff-dual}
and also reflects the sign difference between the two kinds of
twisted tensor products, see again~\cite[Def.~II.1.4]{HusemollerMooreStasheff:1974}.

The product on~\eqref{eq:twisted-dga} is as described by 
Kadeishvili--Saneblidze~\cite[eq.~(12)]{KadeishviliSaneblidze:2005}.
With our sign convention and in Sweedler notation it is of the form
\begin{equation}
  \label{eq:def-twisted-mult}
  (a\otimes b)\cdot(a'\otimes b')
  = \sum_{k\ge0} \sum_{(b)} \, (-1)^{k} \, a\,E_{k}(u(\swee{b}{1}),\dots,u(\swee{b}{k});a')\otimes \swee{b}{k+1}\,b'
\end{equation}
for~\(a\),~\(a'\in C^{*}(X)\) and~\(b\),~\(b'\in C^{*}(F)\). The transposes
\begin{equation}
  E_{k} = \bigl( E^{k} \bigr)^{*} \colon C^{*}(X)^{\otimes k}\otimes C^{*}(X) \to C^{*}(X)
\end{equation}
are the structure maps of the hga~\(C^{*}(X)\), see \Cref{rem:hga}.
Note that the sum over~\(k\) in~\eqref{eq:def-twisted-mult} is in fact only over~\(0\le k\le\deg{b}+\deg{a'}\)
because of the vanishing condition~\eqref{eq:Ek-0}.

We summarize our discussion so far as follows.

\begin{proposition}
  Let \(X\times_{\tau}F\) be a fibre bundle where both the fibre~\(F\) and the structure group~\(G\) have only finitely many non-degenerate simplices.
  Then the dga~\(C^{*}(X)\otimes_{u}C^{*}(F)\) with the product~\eqref{eq:def-twisted-mult} is a model for~\(X\times_{\tau}F\).
  The quasi-iso\-mor\-phism connecting this dga with~\(C^{*}(X\times_{\tau}F)\) is natural in~\(X\),~\(G\) and~\(F\).
\end{proposition}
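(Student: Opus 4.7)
The plan is to assemble pieces developed in this section. I would begin by applying \Cref{thm:psi-dgc} to obtain the quasi-isomorphism of dgcs
\[
\psi\colon C(X)\otimes_{t}C(F) \to C(X\times_{\tau}F).
\]
Since the dual of a morphism of dgcs between degreewise finitely generated free complexes is a morphism of dgas, dualizing produces a quasi-isomorphism
\[
\psi^{*}\colon C^{*}(X\times_{\tau}F) \to \bigl(C(X)\otimes_{t}C(F)\bigr)^{*}
\]
of dgas. Using the finiteness of~\(F\) to identify the dual of a tensor product with the tensor product of duals, the target becomes \(C^{*}(X)\otimes_{u}C^{*}(F)\) via the isomorphism~\eqref{eq:twisted-dga}. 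The finiteness of~\(G\) serves to make \(C^{*}(G)\) a genuine dgc so that \(u=-t^{*}\) is a twisting cochain in the sense used here, as already observed in the discussion preceding the proposition.

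It remains to verify that the induced product on \(C^{*}(X)\otimes_{u}C^{*}(F)\) agrees with the Kadeishvili--Saneblidze formula~\eqref{eq:def-twisted-mult}. I would carry this out by transposing the diagonal~\eqref{eq:def-diag-twisted} of \(C(X)\otimes_{t}C(F)\): each cooperation~\(E^{k}\) on~\(C(X)\) passes to the hga operation~\(E_{k}\) on~\(C^{*}(X)\) via \Cref{rem:hga}, the action~\(\mu_{F}\) dualizes to the coaction implicit in the Sweedler notation of~\eqref{eq:def-twisted-mult}, and the twisting cochain~\(t\) passes to~\(u=-t^{*}\) through the sign convention~\eqref{eq:diff-dual}, which also explains the overall factor~\((-1)^{k}\). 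This is a direct, if sign-sensitive, bookkeeping task governed by the Koszul rule fixed in \Cref{sec:prelim}.

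Finally, naturality in~\(X\),~\(G\) and~\(F\) is inherited from the naturality of each ingredient: Szczarba's twisting cochain and twisted shuffle map are natural by construction (see \Cref{sec:szczarba}), as are the passage from an hgc to its dual hga and the dualization functor on chain complexes. The principal obstacle I anticipate is purely clerical---matching all Koszul signs in the transposed product formula against~\eqref{eq:def-twisted-mult}---and no new conceptual input beyond \Cref{thm:psi-dgc} and the dualization discussion already carried out is required.
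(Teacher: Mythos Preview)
Your proposal is correct and mirrors the paper's own treatment: the proposition is stated there as a summary of the preceding discussion, and your outline recapitulates exactly that discussion (dualize \(\psi\) from \Cref{thm:psi-dgc}, use finiteness of~\(F\) for the identification~\eqref{eq:twisted-dga}, use finiteness of~\(G\) so that \(u=-t^{*}\) is a genuine twisting cochain, and read off the product~\eqref{eq:def-twisted-mult}). One small correction: your phrase ``between degreewise finitely generated free complexes'' is misplaced since \(C(X)\) need not be of finite type---but the dga-morphism claim requires no such hypothesis, and \(\psi^{*}\) is a quasi-isomorphism because \(\psi\) is in fact a chain homotopy equivalence (the paper notes this in a footnote in \Cref{sec:shih}), not merely a quasi-isomorphism.
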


We now look at the duals of the filtrations introduced in the previous section.
Because the filtrations~\(\FF(F)\) and~\(\FF(X,F)\) are comultiplicative,
the dual filtrations of~\(C^{*}(F)=C^{0}(F)\) and~\(C^{*}(X)\otimes_{u}C^{*}(F)\),
\begin{align}
  \label{eq:def-FFp-dual}
  \FF^{-p}(F) &= \bigl\{\, \gamma\in C^{0}(F) \bigm| \text{\(\gamma(m)=0\) for all~\(m\in\FF_{-p-1}(F)\)} \,\bigr\}, \\
  \FF^{-p}(X,F) &= C^{*}(X) \otimes_{u} \FF^{-p}(F)
\end{align}
are multiplicative. Specializing to field coefficients, we arrive at the following conclusion.
It generalizes a result of Rüping--Stephan~\cite[Cor.~4.19]{RuepingStephan:2019} for finite \(p\)-groups and
coefficients of prime characteristic~\(p\), see also~\cite[Rem.~4.20]{RuepingStephan:2019}.

\begin{proposition}
  Let \(\kk\) be a field, and let \(G\) be a finite group such that the augmentation ideal~\(\aa\lhd\kk[G]\) is nilpotent.
  There is a multiplicative spectral sequence~\(\Ess_{r}\) converging to~\(H^{*}(X\times_{\tau}F)\)
  whose first page is of the form
  \begin{equation*}
    \Ess_{1}^{p,q} = H^{p+q}(X) \otimes \gr^{q}(F)
  \end{equation*}
  with componentwise product, where \(\gr^{*}(F)\) is the graded algebra associated to the filtration~\eqref{eq:def-FFp-dual}.
  The spectral sequence is natural in~\(X\), \(G\) and~\(F\).
\end{proposition}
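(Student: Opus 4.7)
The plan is to dualize the homological spectral sequence from the preceding proposition. The filtration~$\FF^{-p}(X,F)$ is by construction the annihilator of~$\FF_{-p-1}(X,F)$, and since $\FF_{\bullet}(X,F)$ is comultiplicative, its dual is multiplicative in the twisted tensor product~$C^{*}(X)\otimes_{u}C^{*}(F)$. Because $\aa\lhd\kk[G]$ is nilpotent, with $\aa^{L}=0$ say, we have $\FF^{-p}(F)=0$ for $p<0$ and $\FF^{-p}(F)=C^{*}(F)$ for $p\ge L$, so the filtration is bounded and the spectral sequence converges strongly. Naturality in~$X$,~$G$ and~$F$ follows at once from the naturality of all constructions involved.

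To identify the target, note that the dga quasi-isomorphism between~$C^{*}(X\times_{\tau}F)$ and~$C^{*}(X)\otimes_{u}C^{*}(F)$ from the previous proposition respects the filtrations (the one on~$C^{*}(X\times_{\tau}F)$ being inherited by transport). Hence the spectral sequence converges to~$H^{*}(X\times_{\tau}F)$ as a graded algebra.

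For the zeroeth page, I would argue that the twisting term~$\delta_{u}$ lowers the filtration degree: indeed $u=-t^{*}$ takes values in~$\FF^{-1}(X)=\ker(\epsilon_{X})^{*}$-like data and, since $t$~itself maps into the augmentation ideal~$\aa$, the dual $u$~raises the $F$-filtration index by at least one. Similarly, inspecting the explicit product~\eqref{eq:def-twisted-mult}, the term with $k\ge1$ cooperations contains $k$~factors $u(\swee{b}{s})$ and therefore strictly raises the filtration superscript. Consequently, on~$\Ess_{0}^{p,q}=C^{p}(X)\otimes\gr^{q}(F)$ both the differential and the product are componentwise, and we obtain
\begin{equation*}
  \Ess_{0}^{*,q} \cong C^{*}(X)\otimes \gr^{q}(F)
\end{equation*}
as dgas with the tensor product differential. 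Taking cohomology and applying the Künneth formula over the field~$\kk$ (permissible because $\gr^{q}(F)$ is simply a graded $\kk$-vector space concentrated in internal degree zero) then yields
\begin{equation*}
  \Ess_{1}^{p,q} = H^{p+q}(X)\otimes \gr^{q}(F)
\end{equation*}
with componentwise product, as claimed.

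The main technical point to verify carefully is that the twisting term~$\delta_{u}$ and the higher-$k$ contributions to~\eqref{eq:def-twisted-mult} genuinely strictly raise the $\FF^{\bullet}(F)$-filtration. This rests on the fact that $t\colon C(X)\to C(G)=\kk[G]$ is concentrated in simplicial degree~$1$ and lands in the augmentation ideal~$\aa=\FF_{-1}(G)$, so that each $G$-factor in the diagonal of~$C(F)$ produced by an iterate of~$\delta_{t}$ or by the hga-based product contributes an extra power of~$\aa$. Once this bookkeeping is in place, the identification of~$\Ess_{0}$ and~$\Ess_{1}$ reduces to the chain- and algebra-level observations above.
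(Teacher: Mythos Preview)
Your approach is the same as the paper's: the proposition is stated there as a summary of the preceding discussion, which dualizes the homological spectral sequence, uses that the dual filtration~\(\FF^{\bullet}(X,F)\) is multiplicative because \(\FF_{\bullet}(X,F)\) is comultiplicative, and observes that only the \(k=0\) term of the product survives on the associated graded since \(t\) lands in~\(\aa\). You add the explicit remark that nilpotency of~\(\aa\) makes the filtration bounded, which is exactly how the paper handles convergence in the homological case.

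A few passages should be tightened. The sentence ``\(u=-t^{*}\) takes values in~\(\FF^{-1}(X)=\ker(\epsilon_{X})^{*}\)-like data'' is garbled: \(u\colon C^{*}(G)\to C^{*}(X)\), and what matters is not where \(u\) lands but that its dual \(t\) takes values in~\(\aa\), so that dually each appearance of \(u\) in~\(\delta_{u}\) or in the \(k\ge1\) terms of~\eqref{eq:def-twisted-mult} shifts the \(\FF^{\bullet}(F)\)-filtration. The cleanest way to argue this is simply to dualize the homological statement that the diagonal is componentwise on~\(\Ess^{0}\), rather than to re-derive it on the cochain side. Also, ``\(\Ess_{0}^{*,q}\cong C^{*}(X)\otimes\gr^{q}(F)\) as dgas'' should read ``as complexes'' for fixed~\(q\); the algebra structure lives on the whole of~\(\Ess_{0}\). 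Finally, there is no need to transport a filtration to~\(C^{*}(X\times_{\tau}F)\): the dga quasi-isomorphism~\(\psi^{*}\) already identifies \(H^{*}\) of the filtered twisted tensor product with~\(H^{*}(X\times_{\tau}F)\), which is all that convergence requires.
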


\section{The Serre spectral sequence}
\label{sec:serre}

\Cref{thm:psi-dgc} allows for a short proof
of the product structure in the cohomological Serre spectral sequence.
The same applies to the comultiplicative structure in the homological setting
considered by Chan~\cite[Thm.~1.2]{Chan:serre}.
We assume throughout this section that \(\kk\) is a principal ideal domain.

Recall that if the homology~\(H(C)\) of a dgc~\(C\) is free over~\(\kk\),
then it is a graded coalgebra with diagonal
\begin{equation}
  H(C) \longrightarrow H(C \otimes C) \stackrel{\cong}{\longrightarrow} H(C) \otimes H(C)
\end{equation}
where the last map is the inverse of the Künneth isomorphism.
(We have mentioned a special case of this already in~\eqref{eq:HX-coalg}.)

\begin{proposition}
  Let \(E=X\times_{\tau}F\) be a twisted Cartesian product with the simplicial group~\(G\) as structure group.
  \begin{enumroman}
  \item Assume that \(H(X)\) and~\(H(F)\) are free over~\(\kk\) and that \(G_{0}\) acts trivially on~\(H(F)\).
    The homological Serre spectral sequence is a spectral sequence of coalgebras with the componentwise coproduct on
    \begin{equation*}
      \Ess^{2}_{pq} = H_{p}(X) \otimes H_{q}(F),
    \end{equation*}
    converging to~\(H(E)\) as a coalgebra.
  \item Assume that \(F\) is of finite type, that \(H^{*}(X)\) or~\(H^{*}(F)\) is flat over~\(\kk\) and that \(G_{0}\) acts trivially on~\(H^{*}(F)\).
    The cohomological Serre spectral sequence is a spectral sequence of algebras with the componentwise product on
    \begin{equation*}
      \Ess_{2}^{pq} = H^{p}(X) \otimes H^{q}(F),
    \end{equation*}
     converging to~\(H^{*}(E)\) as an algebra.
  \end{enumroman}
\end{proposition}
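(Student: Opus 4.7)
The plan is to apply Theorem~\ref{thm:psi-dgc} to replace~\(C(E)\) by the dgc~\(C(X)\otimes_{t}C(F)\) and then to exploit the explicit formula~\eqref{eq:diag-twisted-tensor-explicit} for its diagonal. On the latter I would use the filtration~\(\FF_{p} = C_{\leq p}(X)\otimes_{t}C(F)\) induced by \(X\)-degree; because~\(\psi(x\otimes y)\) involves only simplices of~\(E\) whose projection to~\(X\) is a degeneracy of~\(x\), the map~\(\psi\) is filtration preserving, and since it is a quasi-isomorphism and both filtrations are bounded in each total degree, the resulting spectral sequence converges to~\(H(E)\) and coincides with the Serre spectral sequence from~\(\Ess^{1}\) on.

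First I would verify that~\(\FF\) is comultiplicative. Inspecting~\eqref{eq:diag-twisted-tensor-explicit}, each summand of~\(\Delta(x\otimes y)\) for~\(x\in X_{n}\) has first \(X\)-factor~\(\tpartial^{i}x\in X_{n-i}\) and second~\(z^{\pp}_{k+1}\) with~\(z=(\partial_{0})^{n-i}x\in X_{i}\); since the~\(k\) intervals labelled~\(1,\dots,k\) in an interval cut for~\(e_{k}\) each have length at least~\(1\), one has~\(\dim z^{\pp}_{k+1}\leq i\), so the two filtration degrees sum to at most~\((n-i)+i=n\).

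Next I would compute the pages. On~\(\Ess^{0}_{p,q}=C_{p}(X)\otimes C_{q}(F)\), both~\(d_{C(X)}\otimes 1\) and~\(\delta_{t}\) drop filtration strictly, so~\(d^{0}=1\otimes d_{C(F)}\) and~\(\Ess^{1}=C(X)\otimes H(F)\) by Künneth. The induced~\(d^{1}\) is the sum of~\(d_{C(X)}\otimes 1\) and the part of~\(\delta_{t}\) that drops filtration by exactly one; the latter uses~\(t\) applied to a \(1\)-simplex and therefore acts on the fibre factor through an element~\(\sigma(c)-1\in\aa\lhd\kk[G_{0}]\), which annihilates~\(H(F)\) under the trivial~\(G_{0}\)-action assumption. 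Hence~\(d^{1}=d_{C(X)}\otimes 1\) and~\(\Ess^{2}=H(X)\otimes H(F)\).

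Turning to the coproduct, formula~\eqref{eq:diag-twisted-tensor-explicit} contributes to~\(\Ess^{0}\) a~\(k=0\) piece---the componentwise Alexander--Whitney diagonal on~\(C(X)\otimes C(F)\)---and, for~\(k>0\), further terms in which each factor~\(t(z^{\pp}_{s})=\sigma(z^{\pp}_{s})-1\) lies in the augmentation ideal~\(\aa\lhd\kk[G_{0}]\) and acts on the fibre factor. The hard part is this: any product of elements of~\(\aa\) acts as zero on~\(H(F)\) once~\(G_{0}\) acts trivially there, because each factor~\(\sigma-1\) already does. Hence the~\(k>0\) contributions vanish at~\(\Ess^{1}\), the coproduct there is the componentwise Alexander--Whitney diagonal, and on~\(\Ess^{2}=H(X)\otimes H(F)\) it becomes the componentwise tensor of the Künneth coproducts. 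Convergence as a coalgebra is formal from the comultiplicativity of the bounded filtration. Part~(ii) follows by dualization: the cup product on~\(C^{*}(E)\) is dual to the diagonal on~\(C(E)\), the dual decreasing filtration is multiplicative, and the same page analysis under the stated Künneth hypothesis yields~\(\Ess_{2}^{pq}=H^{p}(X)\otimes H^{q}(F)\) with componentwise cup product.
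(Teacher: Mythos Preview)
Your approach coincides with the paper's: replace \(C(E)\) by \(M=C(X)\otimes_{t}C(F)\) via \Cref{thm:psi-dgc}, filter by \(X\)-degree, compute the first pages using the trivial-\(G_{0}\)-action hypothesis, and verify that the diagonal~\eqref{eq:diag-twisted-tensor-explicit} induces the componentwise coproduct from~\(\Ess^{1}\) on; then dualize. (The paper additionally introduces a companion spectral sequence~\(\Fss^{r}\) for~\(M\otimes M\) to house the target of the coproduct, but this is routine bookkeeping.)

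There is one genuine gap in your coproduct analysis. You assert that for~\(k>0\) ``each factor~\(t(z^{\pp}_{s})=\sigma(z^{\pp}_{s})-1\) lies in the augmentation ideal~\(\aa\lhd\kk[G_{0}]\)'', but the formula~\(t(w)=\sigma(w)-1\) is only the definition of~\(t\) on \(1\)-simplices; for~\(\deg z^{\pp}_{s}\ge2\) the value~\(t(z^{\pp}_{s})\) is a chain in~\(C_{\deg z^{\pp}_{s}-1}(G)\), not an element of~\(\kk[G_{0}]\), and your annihilation argument does not apply. What is missing is precisely the equality case of your own filtration estimate: a summand of~\eqref{eq:diag-twisted-tensor-explicit} survives to the associated graded exactly when~\(\deg z^{\pp}_{k+1}=i\), and since
\[
  \deg z^{\pp}_{k+1} = i + k - \sum_{s=1}^{k}\deg z^{\pp}_{s}
  \qquad\text{with each}\quad \deg z^{\pp}_{s}\ge 1,
\]
this forces~\(\deg z^{\pp}_{s}=1\) for every~\(1\le s\le k\). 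Only then is~\(t(z^{\pp}_{s})=\sigma(z^{\pp}_{s})-1\in\aa\), and only then does the triviality of the \(G_{0}\)-action on~\(H(F)\) kill these terms at~\(\Ess^{1}\). The paper makes the same point, phrased in terms of the quantities~\(\ell(\pp)\) and~\(\ell_{1}(\pp)\) from \Cref{sec:proof-main,sec:morph-dgc}.
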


\begin{proof}
  By \Cref{thm:psi-dgc}, the dgc~\(C(E)\) is quasi-isomorphic
  to~\(M=C(X)\otimes_{t}C(F)\) with the coproduct~\eqref{eq:def-diag-cobar-CX}.
  We filter~\(M\) by increasing degree in~\(C(X)\) and then \(M\otimes M\) via the tensor product filtration.
  Let \(\Ess^{r}\) be the associated spectral sequence converging to~\(H(M)\)
  and \(\Fss^{r}\) the one converging to~\(H(M\otimes M)\).
  
  Since \(G_{0}\) acts trivially on~\(H(F)\), the definition~\eqref{eq:def-tsz} of Szczarba's twisting cochain
  tells us that this module is annihilated by~\(t(x)\) for any~\(x\in X_{1}\).
  Therefore,
  \begin{align}
    \Ess^{0}_{pq} &= C_{p}(X) \otimes C_{q}(F), & d^{0} &= 1 \otimes d, \\
    \Ess^{1}_{pq} &= C_{p}(X) \otimes H_{q}(F), & d^{1} &= d \otimes 1,  \\
    \Ess^{2}_{pq} &= H_{p}(X) \otimes H_{q}(F)
  \end{align}
  and similarly
  \begin{align}
    \Fss^{1}_{pq} &= \bigoplus_{p_{1}+p_{2}=p}\bigoplus_{q_{1}+q_{2}=q} \,
    C_{p_{1}}(X) \otimes H_{q_{1}}(F) \otimes C_{p_{2}}(X) \otimes H_{q_{2}}(F), \\
    \Fss^{2}_{pq} &= \bigoplus_{p_{1}+p_{2}=p}\bigoplus_{q_{1}+q_{2}=q} \,
    H_{p_{1}}(X) \otimes H_{q_{1}}(F) \otimes H_{p_{2}}(X) \otimes H_{q_{2}}(F).
  \end{align}
  
  Inspection of the formula~\eqref{eq:def-diag-cobar-CX} shows that the coproduct is filtration-pre\-serv\-ing
  and that the induced maps between the first and second pages of the spectral sequences are the componentwise diagonals:
  In the notation of \Cref{sec:proof-main,sec:morph-dgc}, summands corresponding to partitions~\(\pp\)
  with~\(\ell_{1}(\pp)>0\) do not contribute, again by the annihilation property of~\(t\) mentioned above,
  and among the remaining ones those with~\(\ell(\pp)<p\)
  end up in a lower filtration degree. This proves the first part.

  The transpose~\(\psi^{*}\colon C^{*}(E) \to M^{*}\)
  of~\(\psi\) is a quasi-isomorphism of dgas. We filter \(M^{*}\) by the dual filtration,
  which leads to a spectral sequence~\(\Ess_{r}\) converging to~\(H^{*}(E)\).
  Since \(F\) is of finite type, we have
  \begin{align}
    \Ess_{0}^{pq} &= \bigl( C_{p}(X) \otimes C_{q}(F) \bigr)^{*}, \\
    \label{eq:E1-cohom}
    \Ess_{1}^{pq} &= C^{p}(X) \otimes H^{q}(F) \\
    \shortintertext{by the cohomological Künneth theorem~\cite[Prop.~VI.10.24, case~II]{Dold:1980}, hence}
    \label{eq:E2-cohom}
    \Ess_{2}^{pq} &= H^{p}(X) \otimes H^{q}(F)
  \end{align}
  by its homological counterpart~\cite[Thm.~VI.9.13]{Dold:1980} and the assumption that \(t(x)\) annihilates \(H^{*}(F)\) for any~\(x\in X_{1}\).
  By the same argument as before, the products on~\eqref{eq:E1-cohom} and~\eqref{eq:E2-cohom} are component\-wise.
  This concludes the proof.
\end{proof}

\appendix

\section{Comparison with Baues' diagonal}
\label{sec:Baues}

Baues \cite[Sec.~1]{Baues:1981} has defined a diagonal on~\(\OM\,C(X)\)
for any \(1\)-reduced simplicial set~\(X\). In this appendix we compare his map
with the diagonal~\eqref{eq:def-diag-cobar-CX} induced by the hgc structure of~\(C(X)\)
(which of course is defined for any~\(X\ne\emptyset\)).
Up to sign, this has already been done by Quesney~\cite[Prop.~5.1]{Quesney:2016}.

\begin{proposition}
  \label{thm:Baues-Ek}
  For a \(1\)-reduced simplicial set~\(X\) the diagonal~\eqref{eq:def-diag-cobar-CX} on~\(\OM\,C(X)\) is the same as Baues'.
\end{proposition}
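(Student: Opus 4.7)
The plan is to compare the two explicit formulas directly. Since both diagonals extend multiplicatively from their action on $\OM_1\,C(X)$, it suffices to verify agreement of $\Delta\CobarEl{x}$ for $x\in X_n$. I would first write out Baues' formula from~\cite[Sec.~1]{Baues:1981}, which expresses $\Delta\CobarEl{x}$ as a sum over certain interval decompositions of~$[n]$, with each summand of the form $\CobarEl{y_1|\cdots|y_k}\otimes\CobarEl{y_{k+1}}$ for suitable faces~$y_i$ of~$x$. These summands have the same shape as those in~\eqref{eq:def-diag-cobar-CX}, and I would set up an explicit bijection between Baues' indexing set and our decompositions~$\pp$ corresponding to the surjection~$e_k$, verifying that the corresponding tensor factors $y_i$ and $x^{\pp}_i$ coincide as (possibly degenerate) faces of~$x$. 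The $1$-reducedness enters by forcing certain degenerate contributions to vanish on both sides, so that what remains lines up combinatorially.

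For the sign comparison, Quesney's \cite[Prop.~5.1]{Quesney:2016} has already established term-wise agreement up to an overall sign. Using \Cref{eq:epp-length0} to simplify $\epsilon(\pp)$ in the case $\ell(\pp)=0$, together with \Cref{thm:pp-refine} to control how $\epsilon(\pp)$ transforms under refinement in the general case, one obtains a closed-form expression for the sign that is directly comparable with Baues'. The main obstacle will be the sign reconciliation itself: Baues' conventions differ from ours in the placement of desuspensions, and hence in the accumulated Koszul signs, as well as in the position of the explicit $(-1)^k$ factor from our definition~\eqref{eq:def-E-CX} of~$E^k$. Systematically aligning these conventions—treating Baues' sign as the reference and tracking each desuspension that must be commuted past another—should reveal that Quesney's residual sign vanishes identically, which constitutes the bulk of the work.
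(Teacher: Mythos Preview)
Your overall plan is sound and matches the paper's: both reduce to comparing $\Delta\CobarEl{x}$ for a single simplex~$x$ and set up a bijection between Baues' subsets~$\JJ\subset\setone{n-1}$ and the interval cuts~$\pp$ for~$e_{k}$, with the $1$-reducedness killing the terms where some~$\deg{x^{\pp}_{s}}\le 1$. The difference lies entirely in the sign verification.

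The paper does not use \Cref{eq:epp-length0} or \Cref{thm:pp-refine} here at all; those lemmas were tailored to the cancellation argument in the proof of \Cref{thm:main} and are not invoked in this appendix. Instead the paper proceeds by induction on~$k$: given a cut~$\pp$ for~$e_{k}$, it compares with the truncated cut~$\pp'$ for~$e_{k-1}$ obtained by deleting the last two intervals (the one labelled~$k$ and the final one labelled~$k+1$), so that $\pp'$ is a decomposition of~$\setzero{p_{2k-1}}$ rather than of~$\setzero{n}$. One then computes, modulo~$2$, how each of the four ingredients of~$\epsilon(\pp)$ (position, permutation, desuspension, explicit sign~$k(k-1)/2$) changes when passing from~$\pp'$ to~$\pp$, and separately how Baues' shuffle sign changes under the corresponding enlargement of~$\JJ$. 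These two total changes are shown to coincide, which closes the induction. The point is that this scheme tracks \emph{both} signs simultaneously at each step, never requiring a closed form for either one.

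Your route via refinement would instead compute~$\epsilon(\pp)$ by passing to the maximal refinement~$\pp'$ with~$\ell(\pp')=0$, so that \Cref{eq:epp-length0} applies and $\epsilon(\pp)=\epsilon(\pp')$ by iterating \Cref{thm:pp-refine}. This is viable in principle but roundabout: the resulting expression for~$\epsilon(\pp)$ is in terms of the shifted labels in~$\pp'$, which must then be rewritten in terms of the original~$\pp$. More seriously, you still need an independent closed form for Baues' shuffle sign to compare against, and your proposal offers no concrete method for obtaining one beyond ``systematically aligning conventions''. The paper's inductive truncation sidesteps both difficulties by never asking for either closed form.
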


This implies that the diagonal~\eqref{eq:def-diag-cobar-CX} is also equal
to the one constructed by Hess--Parent--Scott--Tonks
via homological perturbation theory \cite[Secs.~4~\&~5]{HessEtAl:2006}.\footnote{%
  Hess--Parent--Scott--Tonks state that their recursively defined diagonal agrees with Baues'.
  This includes the sign of each summand~\eqref{eq:Baues-term},
  which is not made explicit for their own formula.} 

\begin{proof}
  Let \(x\in X\) be an \(n\)-simplex.
  The terms in Baues' formula for~\(\Delta\,\CobarEl{x}\) \cite[p.~334]{Baues:1981}
  are indexed by the subsets~\(\JJ\subset\setone{n-1}\).
  It not difficult to see
  that in analogy with formula~\eqref{eq:def-EE} Baues' diagonal is of the form
  \begin{equation}
    \Delta\,\CobarEl{x} = \CobarEl{x} \otimes 1 + \sum_{k=0}^{\infty}\EEt^{k}(x)
  \end{equation}
  for certain functions
  \begin{equation}
    \EEt^{k}\colon C(X) \to \OM_{k}\,C(X) \otimes \OM_{1}\,C(X).
  \end{equation}
  Moreover, each non-zero summand appearing in~\(\EEt^{k}(x)\) can be written as
  \begin{equation}
    \label{eq:Baues-term}
    \pm \bigCobarEl{ x^{\pp}_{1} \bigm| \dots \bigm| x^{\pp}_{k} } \otimes \CobarEl{ x^{\pp}_{k+1} }
  \end{equation}
  for the unique interval cut~\(\pp\) of~\(\setzero{n}\) associated to~\(e_{k}\) such that
  \(x^{\pp}_{k+1}\) contains the vertices indexed by~\(\JJ\) plus~\(0\) and~\(n\).
  Hence, up to sign, we get the claimed identity
  \begin{equation}
    \label{eq:Baues-new}
    \susp^{\otimes(k+1)}\,\EEt^{k}(x) = \susp^{\otimes(k+1)}\,\EE^{k}(x) = (-1)^{k}\AWu{e_{k}}(x).
  \end{equation}
  
  It remains to verify the sign, where we proceed by induction on~\(k\).
  The case~\(k=0\) is trivial because \(\AW_{(1)}\) is the identity map
  and \(\EEt^{0}=\desusp\) the inverse of the suspension map~\(\susp\).

  For~\(k>1\) we compare the signs associated to an interval cut
  \begin{equation}
    \pp\, \colon p_{0} \rr{k+1} p_{1} \rr{1} p_{2} \rr{k+1}
    \cdots
    \rr{k+1} p_{2k-1} \rr{k} p_{2k} \rr{k+1} p_{2k+1}
  \end{equation}
  for the surjection~\(e_{k}\) with those for the interval cut
  \begin{equation}
    \pp' \colon p_{0} \rr{k} p_{1} \rr{1} p_{2} \rr{k}
    \cdots
    \rr{k} p_{2k-1} \phantom{ \rr{k} p_{2k} \rr{k} p_{2k+1} }
  \end{equation}
  for~\(e_{k-1}\). We compute the exponents of all the signs involved,
  always modulo~\(2\). The exponents of the permutation signs differ by
  \begin{align}
    \perm(\pp)-\perm(\pp') &\equiv  
    (p_{2k}-p_{2k-1})\Biggl(p_{1}+1+\sum_{i=1}^{k-1}(p_{2i+1}-p_{2i}+1)\Biggr) \\
    \notag &\equiv (p_{2k}-p_{2k-1})\Biggl(\,\sum_{i=1}^{2k-1}p_{i}+k\Biggr)
  \end{align}
  \def\uu{e_{k}}
  since we have to move the interval corresponding to~\(\uu(2k)=k\)
  before all preceding (inner) intervals
  corresponding to~\(\uu(1)=\uu(3)=\dots=\uu(2k-1)=k+1\).
  The exponents of the position signs change by~\(p_{2k-1}\)
  because of the additional inner interval for~\(\uu(2k-1)=k+1\).
  
  The sign for the summand~\eqref{eq:Baues-term} is the sign
  of the shuffle\footnote{%
    Strictly speaking, this is not a shuffle in the sense of \Cref{sec:shuffle} as \(0\notin\setone{n-1}=\{1,\dots,n-1\}\).}%
  ~\((\setone{n-1}\setminus \JJ, \JJ)\).
  Hence, by passing from~\(k-1\) to~\(k\), the exponent of this sign changes by
  \begin{multline}
    \qquad
    (p_{2k}-p_{2k-1}-1)\Biggl(p_{1}+\sum_{i=1}^{k-1}(p_{2i+1}-p_{2i}+1)\Biggr) \\
    \equiv (p_{2k}-p_{2k-1}-1)\Biggl(\,\sum_{i=1}^{2k-1}p_{i}+k+1\Biggr)
    \qquad
  \end{multline}
  because we have to move all elements in the interior of the \(k\)-th interval
  before all previous values occurring in~\(\JJ\), that is, all vertices in~\(x^{\pp}_{k+1}\)
  with indices strictly between~\(0\) and~\(p_{2k-1}\).

  Still modulo~\(2\), the changes in the exponents add up to
  \begin{equation}
    \sum_{i=1}^{2k}p_{i}+k+1 \equiv \sum_{i=1}^{k}(p_{2i}-p_{2i-1}+1)+1
    \equiv \bigdeg{\, \bigCobarEl{ x^{\pp}_{1}|\dots|x^{\pp}_{k} } \,} + 1.
  \end{equation}
  This is exactly the exponent of the sign change
  we get when we pass from~\(k-1\) to~\(k\) in~\eqref{eq:Baues-new}.
  The sign exponent~\(\deg{\,\CobarEl{x^{\pp}_{1}|\dots|x^{\pp}_{k}}\,}\)
  arises because we have to move the additional suspension operator
  past the element~\(\CobarEl{x^{\pp}_{1}|\dots|x^{\pp}_{k}}\).
  Another minus sign comes from the increased exponent on the right-hand side of~\eqref{eq:Baues-new}.
  This completes the proof.
\end{proof}

\section{Szczarba operators and degeneracy maps}
\label{sec:szczarba-degen}

Apparently, neither in Szczarba's paper~\cite{Szczarba:1961} nor elsewhere in the literature one can find a proof
that Szczarba's twisting cochain~\eqref{eq:def-tsz} and his twisted shuffle map~\eqref{eq:def-psi}
are actually well-defined on normalized chain complexes. The purpose of this appendix is to close this gap.

Recall from~\cite[eq.~(3.1)]{Szczarba:1961} and~\cite[eq.~(6)]{HessTonks:2006}
that the simplicial operators
\begin{equation}
  \DD{k}{\ii}\colon X_{m} \to X_{m+k}
\end{equation}
for~\(0\le k\le n\),~\(\ii\in S_{n}\) and~\(m\ge n-k\) are recursively defined by
\begin{equation}
  \label{eq:def-DD}
  \DD{0}{\emptyset} = \id
  \qquad\text{and}\qquad
  \DD{k}{\ii} =
  \begin{cases}
    \DDd{k}{\ii'}\,s_{0}\,\partial_{i_{1}-k} & \text{if \(k<i_{1}\),} \\
    \DDd{k}{\ii'} & \text{if \(k=i_{1}\),} \\
    \DDd{k-1}{\ii'}\,s_{0} & \text{if \(k>i_{1}\)} \\
  \end{cases}
\end{equation}
for~\(n\ge1\) where \(\ii'=(i_{2},\dots,i_{n})\). Here \(D'\) denotes the derived operator
of a simplicial operator~\(D\), compare~\cite[p.~199]{Szczarba:1961} or~\cite[p.~1863]{HessTonks:2006}.

For~\(n\ge1\) we introduce a map
\begin{equation}
  \Phi\colon S_{n} \times [n] \to S_{n-1} \times [n-1],
  \qquad
  (\ii,p) \mapsto (\jj,q)
\end{equation}
recursively via
\begin{equation}
  \left\{\;
  \begin{alignedat}{4}
    \jj &= (i_{1}-1,\jj'), &\;\;\;  q &= q'+1 & \qquad\text{if\ \ } p &< i_{1}, &\quad\; (\jj',q') &\coloneqq \Phi(\ii',p), \\
    \jj &= \ii', &\;\;\;  q &= 0 & \qquad\text{if\ \ } p & = \rlap{\(i_{1}\)\ \ or\ \ \(i_{1}+1\),} \\
    \jj &= (i_{1},\jj'), &\;\;\;  q &= q'+1 & \qquad\text{if\ \ } p &> i_{1}+1, &\quad\; (\jj',q') &\coloneqq \Phi(\ii',p-1)
  \end{alignedat}
  \right.
\end{equation}
where again \(\ii'=(i_{2},\dots,i_{n})\).
Note that the base case~\(n=1\) is completely covered by the second line above since \(i_{1}=0\) in that case.

\begin{lemma}
  \label{thm:DD-s}
  Let~\(n\ge1\),~\(\ii\in S_{n}\) and~\(p\in[n]\), and set \((\jj,q)=\Phi(\ii,p)\).
  \begin{enumroman}
  \item For any~\(0\le k<p\) and any simplex~\(x\) of dimension~\(m\ge n-k-1\) we have
    \begin{equation*}
      \DD{k}{\ii}\,s_{p-1-k}\,x = s_{q}\,\DD{k}{\jj}\,x.
    \end{equation*}
  \item For any~\(p< k \le n\) and any simplex~\(x\) of dimension~\(m\ge n-k\) we have
    \begin{equation*}
      \DD{k}{\ii}\,x = s_{q}\,\DD{k-1}{\jj}\,x.
    \end{equation*}
  \end{enumroman}
\end{lemma}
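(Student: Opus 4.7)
The plan is to prove both parts of the lemma simultaneously by strong induction on~$n$, mirroring the parallel recursive definitions of $D_{k,\ii}$ and of~$\Phi$.

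The base case $n=1$ forces $\ii=(0)$, so $i_1=0$ and $\Phi$ sends every pair to $(\emptyset,0)$. Part~(ii) reduces to verifying $D_{1,(0)}\,x=s_0\,D_{0,\emptyset}\,x$, which is immediate from~\eqref{eq:def-DD} once one observes that $D'_{0,\emptyset}=\id$. Part~(i), applied with $k=0$ and $p=1$, becomes $D_{0,(0)}\,s_0\,x=s_0\,D_{0,\emptyset}\,x$, equally direct.

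For the inductive step, write $\ii'=(i_2,\dots,i_n)\in S_{n-1}$ and split according to the three branches of~$\Phi$: \textbf{(A)}~$p<i_1$, so $\jj=(i_1-1,\jj')$ and $q=q'+1$ with $(\jj',q')=\Phi(\ii',p)$; \textbf{(B)}~$p\in\{i_1,i_1+1\}$, so $(\jj,q)=(\ii',0)$; \textbf{(C)}~$p>i_1+1$, so $\jj=(i_1,\jj')$ and $q=q'+1$ with $(\jj',q')=\Phi(\ii',p-1)$. Within each of these, subdivide once more into the three branches of~\eqref{eq:def-DD} according to whether $k<i_1$, $k=i_1$, or $k>i_1$. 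In every subcase the strategy is the same: expand $D_{k,\ii}$ via~\eqref{eq:def-DD}, push the degeneracy $s_{p-1-k}$ appearing in part~(i) past $\partial_{i_1-k}$ and $s_0$ using the simplicial identities $\partial_a s_b\in\{s_b\partial_{a-1},\id,s_{b-1}\partial_a\}$ and $s_0 s_b=s_{b+1}s_0$, and then apply the induction hypothesis to $\ii'$ with parameter $p$ in case~(A) or $p-1$ in case~(C). Case~(B) is the conceptual heart: there $\partial_{i_1-k}\,s_{p-1-k}=\id$, since $p=i_1$ yields $\partial_{p-k}s_{p-1-k}=\id$ and $p=i_1+1$ yields $\partial_{p-1-k}s_{p-1-k}=\id$, so the target identity collapses to $D'_{k,\ii'}\,s_0\,x = s_0\,D_{k,\ii'}\,x$, which is the defining property of the derived operator. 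Part~(ii) is handled entirely in parallel, with the simpler observation that no degeneracy needs to be commuted, only the split between $k-1$ and $k$ in~\eqref{eq:def-DD} has to be tracked.

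The main obstacle is combinatorial rather than conceptual: one must verify that the index~$q$ produced by the recursion of~$\Phi$ matches the degeneracy index emerging after the simplicial manipulations, in each of the roughly $3\times3$ configurations $(p\text{-branch},\,k\text{-branch})$. The~$+1$ in the definition of $q$ in cases~(A) and~(C) is precisely what absorbs the shift $s_{p-1-k}\mapsto s_{p-k}$ produced by the commutation with~$s_0$, while the induction hypothesis supplies the further shift needed to match~$q'$. A secondary technical point is that, since~\eqref{eq:def-DD} involves $D'_{k,\ii'}$ rather than $D_{k,\ii'}$, one needs the bridging identity $D'_{k,\ii'}\,s_0 = s_0\,D_{k,\ii'}$ (and its analogue for $D'_{k-1,\ii'}\,s_0$), which follows directly from Szczarba's definition of the derived operator and is the only ingredient beyond the induction hypothesis and the simplicial identities.
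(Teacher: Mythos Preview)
Your proposal is correct and follows essentially the same route as the paper: induction on~$n$ with a case analysis governed by the joint branching of~$\Phi$ and of the recursion~\eqref{eq:def-DD}, using the simplicial identities together with the fact that priming raises all indices by one. The paper merely nests the cases in the opposite order (first by the $k$-branch of~\eqref{eq:def-DD}, then by the $p$-branch of~$\Phi$), which is a purely organizational difference; the sample computation displayed in the paper is exactly your case~(A) with $k<i_1$.
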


\begin{proof}
  These are direct verifications by induction on~\(n\), based on the definitions of~\(\DD{k}{\ii}\) and~\(\Phi\).
  The base cases are \(\ii=(0)\), \(k=0\), \(p=1\) and \(\ii=(0)\), \(k=1\), \(p=0\), respectively.  
  In the induction step of the first formula, one distinguishes the cases~\(k<i_{1}\)
  (with the subcases~\(i_{1}<p-1\), \(i_{1}\in\{p-1,p\}\) and~\(i_{1}>p\)),
  \(k=i_{1}\) (with the subcases~\(i_{1}<p-1\) and~\(i_{1}=p-1\)) and~\(k>i_{1}\).
  For the second formula one has the cases~\(k<i_{1}\), \(k=i_{1}\) and~\(k>i_{1}\)
  (with the subcases~\(p<i_{1}\), \(p\in\{i_{1},i_{1}+1\}\) and~\(p>i_{1}+1\)).

  For instance, for~\(n>1\),~\(k<i_{1}\) and~\(i_{1}>p\) we have
  \begin{align}
    \Sz_{\ii}\,s_{p}\,x &= \DDd{k}{\ii'}\,s_{0}\,\partial_{i_{1}-k}\,s_{p-1-k}\,x
    = \DDd{k}{\ii'}\,s_{0}\,s_{p-1-k}\,\partial_{i_{1}-k-1}\,x \\
    \notag &= \DDd{k}{\ii'}\,s_{p-k}\,s_{0}\,\partial_{i_{1}-k-1}\,x
    = \bigl(\DD{k}{\ii'}\,s_{p-1-k}\bigr)' \, s_{0}\,\partial_{i_{1}-k-1}\,x \\
    \notag &= \bigl( s_{q'}\,\DD{k}{\jj'} \bigr)' \, s_{0}\,\partial_{i_{1}-k-1}\,x \\
    \shortintertext{by induction, where~\((\jj',q')=\Phi(\ii',p)\). Then \(\jj=(i_{1}-1,\jj')\) and~\(q=q'+1\), hence}
    \notag &=  s_{q'+1}\,\DDd{k}{\jj'} \, s_{0}\,\partial_{i_{1}-1-k}\,x
    = s_{q}\,\DDd{k}{\jj}\,x
  \end{align}
  since \(k<p\le i_{1}-1\).
\end{proof}

\begin{proposition}
  Let \(0\le p\le n\), and let \(x\) be an \(n\)-simplex.
  \begin{enumroman}
  \item For~\(\ii\in S_{n}\) and~\((\jj,q)=\Phi(\ii,p)\) we have
    \begin{equation*}
      \Sz_{\ii}\,s_{p}\,x = s_{q}\,\Sz_{\jj}\,x.
    \end{equation*}
  \item For~\(\ii\in S_{n+1}\) and~\((\jj,q)=\Phi(\ii,p+1)\) we have
    \begin{equation*}
      \hatSz_{\ii}\,s_{p}\,x = s_{q}\,\hatSz_{\jj}\,x.
    \end{equation*}
  \end{enumroman}
\end{proposition}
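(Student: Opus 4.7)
The plan is to expand the defining product of $\Sz_\ii s_p x$ (resp.\ $\hatSz_\ii s_p x$) slot by slot, show that all degeneracies propagate outwards to produce a single common factor of $s_q$, and then reassemble the remaining product as $\Sz_\jj x$ (resp.\ $\hatSz_\jj x$). The two parts of \Cref{thm:DD-s} are precisely what we need to move a single $s_\bullet$ past one $\DD{k}{\ii}$, so the argument reduces to bookkeeping once the three ranges of $k$ are identified.

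First I would compute $\sigma((\partial_0)^j s_p x)$ for every $j$. Iterating the simplicial identity $\partial_0 s_p = s_{p-1}\partial_0$ (for $p\ge1$) together with $\partial_0 s_0 = \id$ gives $(\partial_0)^j s_p x = s_{p-j}(\partial_0)^j x$ for $j<p$, $s_0(\partial_0)^p x$ for $j=p$, and $(\partial_0)^{j-1} x$ for $j>p$. Applying $\sigma$ and invoking~\eqref{eq:tau-sk} and~\eqref{eq:tau-s0} then yields
\begin{equation*}
  \sigma\bigl((\partial_0)^j s_p x\bigr) = \begin{cases} s_{p-j-1}\,\sigma((\partial_0)^j x) & j<p, \\ 1 & j=p, \\ \sigma((\partial_0)^{j-1}x) & j>p. \end{cases}
\end{equation*}

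Substituting into the formula for $\Sz_\ii s_p x$, the factor at $k=p$ collapses to $\DD{p}{\ii}(1)=1$ and disappears. Each factor with $k<p$ can be rewritten by \Cref{thm:DD-s}(i) as $s_q\,\DD{k}{\jj}\sigma((\partial_0)^k x)$, and each factor with $k>p$ by \Cref{thm:DD-s}(ii) as $s_q\,\DD{k-1}{\jj}\sigma((\partial_0)^{k-1}x)$. Since $s_q$ is a group homomorphism on $G$, it pulls out of the entire product; after the reindexing $k\mapsto k-1$ in the tail, the remaining factors reassemble into $\Sz_\jj x$, which proves~(i). The edge case $p=0$ falls under the same argument, with the ``$k<p$'' range empty and the trivial middle factor sitting at the first slot.

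Part~(ii) is formally identical, applied to $\hatSz_\ii s_p x = \bigl(\DD{0}{\ii} s_p x,\; \prod_{k=1}^{n+1}\DD{k}{\ii}\sigma((\partial_0)^{k-1} s_p x)\bigr)$. The shift $p\mapsto p+1$ in the definition of $(\jj,q)=\Phi(\ii,p+1)$ is exactly what is required to absorb the extra initial slot: \Cref{thm:DD-s}(i) with $k=0$ gives $\DD{0}{\ii} s_p x = s_q\,\DD{0}{\jj} x$, the trivial middle factor now sits at $k=p+1$, and the second component is handled as in~(i). The main obstacle will be purely organisational, namely keeping the three ranges of $k$, the index conventions of \Cref{thm:DD-s}, and the recursive definition of $\Phi$ mutually consistent; no idea beyond \Cref{thm:DD-s} together with the two properties~\eqref{eq:tau-sk}--\eqref{eq:tau-s0} of $\tau$ on degenerate simplices is needed.
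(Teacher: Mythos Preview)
Your proposal is correct and follows exactly the same approach as the paper: expand the product defining $\Sz_{\ii}\,s_{p}\,x$, use the identities $\sigma(s_{m}y)=s_{m-1}\sigma(y)$ and $\sigma(s_{0}y)=1$ coming from~\eqref{eq:tau-sk}--\eqref{eq:tau-s0} to rewrite each factor, apply the two parts of \Cref{thm:DD-s} in the ranges $k<p$ and $k>p$, drop the trivial middle factor, and pull out the common~$s_{q}$. The paper only spells out part~(i) and leaves~(ii) implicit, whereas you correctly identify that the shift $p\mapsto p+1$ in~$\Phi$ aligns the indices for the extra $\DD{0}{\ii}$-slot in~$\hatSz$.
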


\begin{proof}
  These formulas follow from \Cref{thm:DD-s} and the identities~\eqref{eq:tau-sk} and~\eqref{eq:tau-s0}.
  For example, we have
  \begin{align}
    \Sz_{\ii}\,s_{p}\,x &=
    \DD{0}{\ii}\,\sigma(s_{p}\,x)\,\DD{1}{\ii}\,\sigma(\partial_{0}\,s_{p}\,x)\cdots \DD{n}{\ii}\,\sigma((\partial_{0})^{n}s_{p}\,x) \\
    \notag &= \DD{0}{\ii}\,s_{p-1}\,\sigma(x)\cdots\DD{p-1}{\ii}\,s_{0}\,\sigma((\partial_{0})^{p-1}\,x)
    \DD{p}{\ii}\,\sigma(s_{0}\,(\partial_{0})^{p}\,x) \\*
    \notag &\qquad\qquad\qquad\qquad\qquad\; {}\cdot \DD{p+1}{\ii}\,\sigma((\partial_{0})^{p}\,x)\cdots \DD{n}{\ii}\,\sigma((\partial_{0})^{n-1}\,x) \\
    \notag &= s_{q}\,\DD{0}{\jj}\,\sigma(x)\cdots s_{q}\,\DD{p-1}{\jj}\,\sigma((\partial_{0})^{p-1}\,x) \cdot 1 \\*
    \notag &\qquad\qquad\qquad\qquad {}\cdot s_{q}\,\DD{p}{\jj}\,\sigma((\partial_{0})^{p}\,x)\cdots s_{q}\,\DD{n-1}{\jj}\,\sigma((\partial_{0})^{n-1}\,x) \\
    \notag &= s_{q}\,\Sz_{\jj} x. \qedhere
  \end{align}
\end{proof}

\begin{corollary}
  Szczarba's twisting cochain~\(t\) and the twisted shuffle map~\(\psi\)
  descend to the normalized chain complexes.
\end{corollary}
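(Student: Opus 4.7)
The plan is to derive the corollary directly from the preceding proposition, together with the classical fact that the shuffle map respects degeneracies. Write $D$ for the subcomplex of degenerate chains in any simplicial chain complex; we need to check that the formulas~\eqref{eq:def-tsz} and~\eqref{eq:def-psi}, viewed on the full (unnormalized) simplicial chain complexes, send $D$ into $D$.

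For $t$, the claim is that $t(s_p x)\in D(G)$ for every $x\in X_n$ and every $0\le p\le n$. When $n=0$, the identity~\eqref{eq:tau-s0} gives $\sigma(s_0 x)=1$, whence $t(s_0 x)=0$. When $n\ge1$, the definition~\eqref{eq:def-tsz} together with part~(i) of the preceding proposition yields
\begin{equation*}
  t(s_p x) = \sum_{\ii\in S_n}(-1)^{\deg{\ii}}\,\Sz_{\ii}\,s_p\,x = \sum_{\ii\in S_n}(-1)^{\deg{\ii}}\,s_{q(\ii)}\,\Sz_{\jj(\ii)}\,x,
\end{equation*}
where $(\jj(\ii),q(\ii))=\Phi(\ii,p)$. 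Every summand lies in the image of a degeneracy operator, so the whole chain is degenerate.

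For $\psi$, we must show that $\psi$ maps both $D(X)\otimes C(F)$ and $C(X)\otimes D(F)$ into $D(X\times_\tau F)$. Part~(ii) of the proposition gives
\begin{equation*}
  \hatSz_{\ii}\,s_p\,x = s_{q(\ii)}\,\hatSz_{\jj(\ii)}\,x,
\end{equation*}
a degenerate simplex of $X\times_\tau G$. By the standard normalization property of the Eilenberg--Mac~Lane shuffle, the map $\shuffle$ sends both $D\otimes C$ and $C\otimes D$ into $D$, while the simplicial map $(\id_X,\mu_F)$ obviously preserves degeneracies. Hence each term of $\psi(s_p x\otimes y)$ and of $\psi(x\otimes s_p y)$ lies in $D(X\times_\tau F)$, as required.

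The substantive content is already captured in the preceding proposition; the normalization property of the shuffle map is classical, so no genuine obstacle arises beyond bookkeeping.
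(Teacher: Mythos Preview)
Your proof is correct and follows essentially the same approach as the paper: invoke the preceding proposition to see that every $\Sz_{\ii}\,s_{p}\,x$ and $\hatSz_{\ii}\,s_{p}\,x$ is degenerate, and handle the special case of a degenerate $1$-simplex via~\eqref{eq:tau-s0}. You add the explicit remark that the shuffle map preserves degeneracies (needed for the second tensor factor in~$\psi$), which the paper leaves implicit, but there is no substantive difference.
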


\begin{proof}
  This is a consequence of the formulas just established and, for the twisting cochain~\(t\), the identity~\(t(s_{0}\,x)=\sigma(s_{0}\,x)-1=0\)
  for any \(0\)-simplex~\(x\).  
\end{proof}

\end{document}